\newtheorem{thm}{Theorem}[section]
\newtheorem{prop}[thm]{Proposition}
\newtheorem{cor}[thm]{Corollary}
\newtheorem{lem}[thm]{Lemma}
\newtheorem{defi}[thm]{Definition}
\newtheorem{remark}[thm]{Remark}
\newtheorem{example}[thm]{Example}
\newtheorem{pb}[thm]{Problem}
\newenvironment{ex}{\begin{example}\rm}{\end{example}}
\numberwithin{equation}{section}
\newcommand{\real}{{\mathbb R}}
\newcommand{\nat}{{\mathbb N}}
\newcommand{\ent}{{\mathbb Z}}
\newcommand{\com}{{\mathbb C}}
\newcommand{\T}{{\mathbb T}}
\newcommand{\cK}{{\mathcal K}}
\newcommand{\mcL}{{\mathcal L}}
\newcommand{\rTr}{{\mathrm{Tr} }}
\newcommand{\Bc}{\mathcal{B}}
\newcommand{\ra}{{\rightarrow}}
\newcommand{\8}{{\infty}}
\newcommand{\be}{\begin{eqnarray*}}
\newcommand{\ee}{\end{eqnarray*}}
\newcommand{\beq}{\begin{equation}}
\newcommand{\eeq}{\end{equation}}
\newcommand{\beqn}{\begin{equation*}}
\newcommand{\eeqn}{\end{equation*}}
\newcommand{\D}{{D}}
\newcommand{\pd}{{\partial}}
\newcommand{\p}{{\psi}}
\newcommand{\ri}{{\rm{i}}}
\newcommand{\vf}{{\varphi}}
\newcommand{\qt}{{\mathbb{T}_\theta^d}}
\newcommand{\ot}{{\otimes}}
\newcommand{\sgn}{{{\rm{sgn}}}}
\newcommand{\g}{{\gamma}}
\newcommand{\al}{{\alpha}}
\newcommand{\bt}{{\beta}}
\newcommand{\lan}{{\langle}}
\newcommand{\ran}{{\rangle}}
\newcommand{\Jx}{{\langle \xi \rangle}}
\newcommand{\wh}{\widehat}
\newcommand{\tr}{\mathrm{tr}}
\newcommand{\rank}{\mathrm{rank}}
\newcommand{\Cplx}{\mathbb{C}}
\newcommand{\Sp}{\mathbb{S}}
\newcommand{\Sc}{{C^\infty}}
\newcommand{\Dist}{{\mathcal{D}}}
\newcommand{\cS}{{\Sc}}
\def\qd{\,{\mathchar'26\mkern-12mu d}}
\newcommand{\hl}{}
\begin{document}

\title{Quantum differentiability on quantum tori}

\author{Edward MCDONALD}

\address{School of Mathematics and Statistics, UNSW, Kensington, NSW 2052, Australia}
\email{edward.mcdonald@unsw.edu.au}

\thanks{{\it 2000 Mathematics Subject Classification:} Primary: 46G05. Secondary: 47L10, 58B34}

\thanks{{\it Key words:} Quantum tori, quantized derivative, trace formula, Sobolev space}

\author{Fedor SUKOCHEV}

\address{School of Mathematics and Statistics, UNSW, Kensington, NSW 2052, Australia}
\email{f.sukochev@unsw.edu.au}

\author{Xiao XIONG}

\address{Institute for Advanced Study in Mathematics, Harbin Institute of Technology, 150001 Harbin, China, and School of Mathematics and Statistics, UNSW, Kensington, NSW 2052, Australia}
\email{xxiong@hit.edu.cn}

\date{}
\maketitle

\markboth{E. Mcdonald, F. Sukochev and X. Xiong}%
{Quantum differentiability on quantum tori}

\begin{abstract}
    We provide a full characterisation of quantum differentiability (in the sense of Connes) on quantum tori. We also prove
    a quantum integration formula which differs substantially from the commutative case.

\end{abstract}

\section{Introduction}

    Quantum tori (also known as noncommutative tori and irrational rotation algebras) are landmark examples in noncommutative geometry. These algebras have featured in many directions in physics, such as the study of the quantum Hall effect \cite{Bellissard-original, Bellissard-van-Elst-Schulz-Baldes, Xia-qhe}, Matrix theory \cite{Connes-Douglas-Schwarz-matrix-theory-1998}, string theory \cite{Seiberg-Witten} and deformation quantisation \cite{Rieffel-deformation-quantization}.
    Quantum tori have been heavily studied from the perspective of operator algebras \cite{Effros-Hahn-memoirs-1967,Pimsner-Voiculescu-crossed-products-1980, Rieffel-1981} and were later taken as a fundamental example in noncommutative
    geometry (see \cite{Connes1980}, \cite[Chapter 12]{green-book} and \cite{CM2014}). In the context of foliation theory, quantum tori are studied as the $C^*$-algebra associated to a Kronecker foliation \cite[Chapter 2, Section 9.$\beta$]{Connes1994}.    
    
    A. Connes introduced the quantised calculus in \cite{Connes-ncdg-1985} as an analogue of the algebra of differential forms in a noncommutative setting, and later explored the link with the action functional of Yang-Mills theory \cite{Connes1988}.
    Connes successfully applied quantised calculus in computing the Hausdorff measure of Julia sets and limit sets of Quasi-Fuchsian groups in the plane \cite[Chapter 4, Section 3.$\gamma$]{Connes1994} (for a more recent exposition see \cite{CSZ,CMSZ}).
    
    The core ingredients of the quantised calculus, as outlined in \cite{Connes-ncdg-1985}, are a separable Hilbert space $H$, a unitary self-adjoint operator $F$ on $H$ and a $C^*$-algebra $\mathcal{A}$ represented on $H$ such that 
    for all $a \in \mathcal{A}$ the commutator $[F,a]$ is a compact operator on $H$. Then the quantised differential of $a \in \mathcal{A}$ is defined to be the operator $\qd a = \ri[F,a]$. The compact operators
    on $H$ are described by Connes as being analogous to infinitesimals, and the rate of decay of the sequence of singular values:
    \begin{equation*}
        \mu(n,T) := \inf\{\|T-R\|\;:\;\mathrm{rank}(R)\leq n\}
    \end{equation*}
    corresponds in some way to the ``size" of the infinitesimal $T$ (see \cite{Connes1995}). In this setting one can quantify the smoothness of an element $a \in \mathcal{A}$
    in terms of the rate of decay of $\{\mu(n,\qd a)\}_{n=0}^\infty$. Of particular interest are those elements $a \in \mathcal{A}$ which satisfy:
    \begin{align*}
                            \mu(n,\qd a) &= O((n+1)^{-1/p}),\quad  n\to \infty,\text{ or,}\\
        \sum_{n=0}^\infty \mu(n,\qd a)^p &< \infty,\text{ or,}\\
        \sup_{n \geq 1} \frac{1}{\log(n+2)} \sum_{k=0}^n \mu(k,\qd a)^p &< \infty\,,
    \end{align*}
    for some $p \in (0,\infty)$. The first condition stated above is that $\qd a$ is in the weak-Schatten ideal $\mcL_{p,\infty}$, the second condition
    is for $\qd a$ to be in the Schatten ideal $\mcL_p$, and the final condition is that $|\qd a|^p$ is in the Macaev-Dixmier ideal $\mathcal{M}_{1,\infty}$ \cite[Chapter 4, Section 2.$\beta$]{Connes1994} (see also \cite[Example 2.6.10]{LSZ2012}).
    
    The link between quantised calculus and geometry is discussed by Connes in \cite{Connes1988}. A model example for quantised calculus is to take a compact Riemannian spin manifold $M$ with Dirac operator $D$,
    and define $H$ to be the Hilbert space of square integrable sections of the spinor bundle. The algebra $\mathcal{A} = C(M)$ of continuous functions on $M$ acts by pointwise multiplication on $H$, and one defines
    \begin{equation*}
        F := \chi_{[0,\infty)}(D)-\chi_{(-\infty,0)}(D).
    \end{equation*}
    One then has $\qd f = \ri[F,M_f]$, where $M_f$ is the operator on $H$ of pointwise multiplication by $f$.
    In quantised calculus the immediate question is to determine the relationship between the degree of differentiability of $f \in C(M)$ and 
    the rate of decay of the singular values of $\qd f$. In general, we have the following:
    \begin{equation*}
        f \in C^\infty(M) \Rightarrow |\qd f|^d \in \mathcal{M}_{1,\infty},
    \end{equation*}
    where $d$ is the dimension of the manifold $M$ \cite[Theorem 3.1]{Connes1988}.
          
    For certain special cases it is possible to obtain a far more precise understanding of the relationship between the smoothness of $f$ and the singular values of $\qd f$. 
    The simplest example is to take the unit circle $\T = \{z \in \com\;:\; |z| = 1\}$,
    with $\mathcal{A} = C(\T)$, $H = L_2(\T)$ and the standard choice of $F$ in this setting is the Hilbert transform. Then by a result of V. Peller \cite[Theorem 7.3]{Peller2003}, we have that
    for any $p \in (0,\infty)$: $\qd f \in \mcL_p$ if and only if $f$ is in the Besov space $B^{1/p}_{p,p}(\T)$. Peller's work has been extended to obtain even more precise relationships between $f$ and the singular values of $\qd f$, for example 
    L. Gheorghe \cite{Gheorghe2001} found necessary and sufficient conditions on $f$ to ensure that $\qd f$ is in an arbitrary Riesz-Fisher space. For more details from a quantised calculus perspective, see \cite[Chapter 4, Section 3.$\alpha$]{Connes1994}.
    
    In higher dimensions, the relationship between $f$ and $\qd f$ has also been studied \cite{JW1982,RS1989,CST1994}. To illustrate the situation, consider the $d$-dimensional torus $\T^d${\hl, $d \geq 2$}. The appropriate Dirac operator in this setting
    is:
    \begin{equation*}
        D = \sum_{j=1}^d -\ri\gamma_j\otimes \partial_j,
    \end{equation*}
    where $\partial_j$ denotes differentiation with respect to the $j$th coordinate on $\T^d$, and $\{\gamma_1,\ldots,\gamma_d\}$ denotes the $d$-dimensional Euclidean gamma matrices,
    which are self-adjoint $2^{\lfloor \frac{d}{2}\rfloor}\times 2^{\lfloor \frac{d}{2}\rfloor}$ complex matrices satisfying $\gamma_j\gamma_k+\gamma_k\gamma_j = 2\delta_{j,k}1$. The operator $D$ may be considered as an unbounded self-adjoint operator 
    on the Hilbert space $L_2(\T^d,\com^{2^{\lfloor \frac{d}{2}\rfloor}})$.
    The corresponding operator $F$ is a linear combination of Riesz transforms. The commutators of Riesz transforms and multiplication operators are studied in classical harmonic analysis: S. Janson
    and T. Wolff \cite{JW1982} proved that for $\qd f$ to be in $\mcL_p$ when $p > d$ it is necessary and sufficient that $f$ is in the Besov space $B^{\frac{d}{p}}_{p,p}(\T^d)$. On the other hand, Janson and Wolff also proved that if $p \leq d$ then
    $\qd f \in \mcL_{p}$ if and only if $f$ is a constant.
    
    A far more general characterisation of the spectral properties of commutators of Riesz transforms and multiplication operators was obtained by R. Rochberg and S. Semmes \cite{RS1989}.
    To date, investigations on the relationship between $f$ and $\qd f$ have been limited to the commutative case. To the best of our knowledge, the results treated in this paper are the 
    first concerning quantum differentiability in the strictly noncommutative setting.
    
    A related direction of research concerning quantised differentials is trace formulae. As early as \cite{Connes1988} it was known that for functions on compact manifolds, it is possible
    to express the Dixmier trace $\tr_\omega(|\qd f|^p)$ as an integral of a derivative of $f$ (See Subsection \ref{operator notation subsection} for the relevant definitions, and \cite[Chapter 6]{LSZ2012} for details
    on Dixmier traces).
    
    If $f \in C^\infty(\T^d)$, let $\nabla f = (\partial_1f,\partial_2f,\ldots,\partial_df)$ be the gradient vector of $f$, and let $\|\nabla f\|_2 = \left(\sum_{j=1}^d |\partial_j f|^2\right)^{\frac{1}{2}}$. Then as a special case of \cite[Theorem 3.3]{Connes1988} we have:
    \begin{equation}\label{torus trace formula}
        \tr_\omega(|\qd f|^d) = k_d\int_{\T^d} \|\nabla f(t)\|_{2}^ddm(t),
    \end{equation}
    where $k_d$ is a constant, and $m$ denotes the flat measure on $\T^d$ (i.e., the Haar measure). From the perspective of noncommutative geometry this formula ``shows how to pass from quantized $1$-forms to ordinary forms, not by a classical limit, but by a direct application
    of the Dixmier trace" \cite[Page 676]{Connes1988}. It is also possible to prove a similar formula for functions on the non-compact manifold $\real^d$, and indeed
    to extend the class of traces on the left hand side of \eqref{torus trace formula} to the much larger class of all continuous normalised traces on $\mcL_{1,\infty}$ \cite{LMSZ2017}.
         
    Recently there has been work on generalising the methods of harmonic analysis on tori to quantum tori.
    
    On a noncommutative torus $\qt$ (defined in terms of an arbitrary antisymmetric real $d\times d$ matrix $\theta$), it is possible to define analogues of many of the tools
    of harmonic analysis, such as differential operators and function spaces \cite{XXY2018} (see Section \ref{nc tori subsection}). In this setting, there are analogues of all of the components of \eqref{torus trace formula}, although the integral on the right
    must be replaced with the canonical trace associated to $\qt$. However the most straightforward generalisation of \eqref{torus trace formula}
    to $\qt$ is actually false. In this paper we state and prove a correct version of \eqref{torus trace formula} for noncommutative tori (Theorem \ref{trace formula}). The formula is stated for an appropriate class of elements
    $x \in {\hl L_2(\qt)}$ as:
    \begin{equation}\label{nc torus trace formula}
        \varphi(|\qd x|^d) = c_d\,\int_{\mathbb{S}^{d-1}} \tau\Bigg(\Big(\sum_{j=1}^d |\pd_j x-s_j\sum_{k=1}^d s_k\pd_k x|^2\Big)^{\frac{d}{2}}\Bigg)\,ds.
    \end{equation}
    Here, $\tau$ is the canonical trace associated to the noncommutative torus, and $c_d$ is a certain constant depending on $d$ (different to the constant $k_d$ in \eqref{torus trace formula}). The integral
    is over $s = (s_1,\ldots,s_d)$ in the $(d-1)$-dimensional sphere $\mathbb{S}^{d-1}$, with respect to its rotation-invariant measure $ds$.
    The partial derivatives $\{\partial_1x,\ldots,\partial_dx\}$ are defined in Subsection \ref{calculus definition subsubsection}.
    In the commutative case, the above formula reduces to \eqref{torus trace formula} (for a full comparison, see the discussion in Subsection \ref{commutative_discussion_subsection}). There are a number of nontrivial
    corollaries to \eqref{nc torus trace formula}, which we describe in the section below.

\subsection{Main results}
    We have three main results. We take $\theta$ to be an arbitrary $d\times d$ antisymmetric real matrix {\hl where $d \geq 2$}, in particular $\theta=0$ is not excluded. For further explanation of the notation, see Section \ref{notation section} below.
    
    Our first main result provides sufficient conditions for $\qd x \in \mcL_{d,\infty}$:
    \begin{thm}\label{sufficiency}
        If {\hl $x \in \dot{H}^1_d(\qt)$}, then {\hl $\qd x$ has bounded extension, and the extension is in $\mcL_{d,\infty}$}.
    \end{thm}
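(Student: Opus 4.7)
\emph{Proof plan.} The strategy is to prove the a priori estimate
\begin{equation*}
\|\qd x\|_{\mcL_{d,\infty}} \leq C\,\|\nabla x\|_{L_d(\qt)}
\end{equation*}
on the dense subspace of noncommutative trigonometric polynomials (finite $\ent^d$-supported Fourier series) and then extend by continuity. On such a polynomial $x$, the element acts as a bounded operator on $L_2(\qt)$ and $\qd x$ is literally finite rank, so there is no issue of domain; the point of the density step is to produce the bounded $\mcL_{d,\infty}$ extension for general $x \in \dot{H}^1_d(\qt)$, which need not itself be bounded.

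Writing $F = D|D|^{-1}$ and applying the Leibniz rule twice gives the decomposition
\begin{equation*}
\qd x \;=\; \ri [F,x] \;=\; \ri [D,x]\,|D|^{-1} \;-\; \ri F\,[|D|,x]\,|D|^{-1}.
\end{equation*}
Because each $\pd_j$ is a derivation on $\qt$, we have the clean identity $[D,x] = -\ri\sum_{j=1}^{d}\gamma_j\otimes(\pd_j x)$, where $\pd_j x$ is identified with its left multiplication action on $L_2(\qt)$. Thus the first term is a finite sum of operators of the form $(\pd_j x)\,|D|^{-1}$.

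The principal analytic input is a Cwikel-type estimate on the quantum torus: for every $f \in L_d(\qt)$,
\begin{equation*}
\|f\cdot |D|^{-1}\|_{\mcL_{d,\infty}} \;\leq\; C\,\|f\|_{L_d(\qt)}.
\end{equation*}
This reflects the fact that the eigenvalue sequence of $|D|^{-1}$, indexed by $\ent^d\setminus\{0\}$, is in weak $\ell^d$, together with a harmonic-analytic reduction adapted to the action of $L_d(\qt)$ on $L_2(\qt)$; such estimates are part of the Sobolev-space machinery now available on $\qt$ (cf.\ \cite{XXY2018}). Applied with $f = \pd_j x \in L_d(\qt)$, it controls the first term by $\|\nabla x\|_{L_d(\qt)}$.

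The main obstacle is the remainder $F[|D|,x]|D|^{-1}$: although $F$ is unitary and hence bounded, the operator $|D|=\sqrt{-\Delta}\otimes 1$ is nonlocal on $\qt$ and $[|D|,x]$ is not a bounded multiplier. I would treat it via the resolvent representation
\begin{equation*}
|D| \;=\; \frac{2}{\pi}\int_0^\infty \Big(1 - \frac{t^2}{D^2+t^2}\Big)\,dt,
\end{equation*}
which leads to
\begin{equation*}
[|D|,x] \;=\; \frac{2}{\pi}\int_0^\infty \frac{t^2}{D^2+t^2}\,\big(D[D,x]+[D,x]D\big)\,\frac{1}{D^2+t^2}\,dt.
\end{equation*}
Each integrand is then a product of resolvent-type functions of $D$ and left multiplications by $\pd_j x$, to which the Cwikel estimate applies fibrewise; combining these bounds with the quasi-triangle inequality in $\mcL_{d,\infty}$ and absorbing the $t$-integral produces the required control on the second term. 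Closedness of the commutator then promotes the a priori estimate from the dense polynomial subalgebra to all of $\dot{H}^1_d(\qt)$, yielding the bounded $\mcL_{d,\infty}$ extension claimed.
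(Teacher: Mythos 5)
Your overall architecture (a priori estimate on a dense subspace, then extension) and your treatment of the first term are in the spirit of the paper, but the argument has a genuine gap at exactly the point where the real difficulty lies: the remainder $F[|D|,1\ot M_x]|D|^{-1}$. If you bound each integrand in your resolvent formula by pairing an operator-norm estimate of the resolvent factors with a Cwikel estimate on the factor containing $M_{\pd_j x}$, the resulting $t$-integral diverges logarithmically: for the piece $\tfrac{t^2}{\D^2+t^2}\D\,[\D,1\ot M_x]\,\tfrac{1}{\D^2+t^2}|\D|^{-1}$ one gets $\|\tfrac{t^2\D}{\D^2+t^2}\|_\infty\lesssim t$ while the weak norm of the remaining factor behaves like $t^{-2}$ for large $t$, so the bound is of order $t^{-1}$ and $\int^\infty t^{-1}\,dt=\infty$. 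This is not an artifact of sloppy constants: it reflects the fact that $\lambda\mapsto|\lambda|$ is not operator-Lipschitz, so no estimate that separates the resolvent factors from the commutator by norms alone can close. The paper handles this commutator (in the equivalent form $[\,\D(1+\D^2)^{-1/2},1\ot M_x]$) by the Birman--Solomyak double operator integral representation, factorising the divided difference as $\psi_1\psi_2\psi_3$ and invoking the nontrivial fact that the transformer $T_{\psi_2}^{\D,\D}$ is bounded on $\mcL_1$ and $\mcL_\infty$ (hence, by real interpolation, on $\mcL_{d,\infty}$); this is the key input your sketch has no substitute for. Moreover, the reduction then leads to $(1-\Delta)^{-1/4}M_{\pd_j x}(1-\Delta)^{-1/4}$, which the paper estimates by splitting $\pd_j x=U_j|\pd_j x|^{1/2}\,|\pd_j x|^{1/2}$ and applying the Cwikel bound at exponent $2d$ on each side. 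Your ``principal analytic input'' $\|M_f|\D|^{-1}\|_{d,\infty}\lesssim\|f\|_d$ is the weak Cwikel estimate at the exponent $d$ itself, which the paper only has for exponents strictly greater than $2$ (Theorem \ref{Cwikel-type}\eqref{weak L_p cwikel}); at the endpoint $d=2$, which is allowed here, that estimate is not available in this generality, and the symmetric splitting is precisely what circumvents it.

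Two smaller points. First, $|\D|^{-1}$ does not exist as written: $\D$ has a nontrivial (finite-dimensional) kernel coming from $n=0$, so your decomposition needs a finite-rank correction or the smoothed substitute $\D(1+\D^2)^{-1/2}$ used in the paper (whose difference from $\sgn(\D)$ is handled by a separate Cwikel argument in $\mcL_d$, with the $\|x\|_d$ term removed via the Poincar\'e inequality). Second, for $d\geq 2$ the commutator $\qd(U^m)$ is not finite rank (its symbol $\tfrac{(n+m)_j}{|n+m|}-\tfrac{n_j}{|n|}$ is nonzero for infinitely many $n$, decaying like $|n|^{-1}$), though this claim is inessential to your plan; what the density step really requires is the kind of strong-convergence argument on $C^\infty(\qt)\ot\com^N$ that the paper carries out at the end of its Lemma \ref{commutator-Sob}.
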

    {\hl The space $\dot{H}^{1}_d(\qt)$ is a noncommutative homogeneous Sobolev space }defined with respect to the partial derivatives $\partial_j$, $j=1,\ldots,d$ (these notions will be defined and discussed in Subsection \ref{calculus definition subsubsection}). We note that the above condition is similar to that in \cite[Theorem 11]{LMSZ2017}.    
    
    With Theorem \ref{sufficiency}, we can prove our second main result, the following trace formula:      
    \begin{thm}\label{trace formula}
    Let $x\in \dot H^1_d(\qt)$ be self-adjoint. Then there is a constant $c_d$ depending only on the dimension $d$ such that for any continuous normalised trace $\varphi$ on $\mcL_{1,\infty}$ we have:
        \begin{equation*}
            \varphi(|\qd x|^d) = c_d\,\int_{\mathbb{S}^{d-1}} \tau\Bigg(\Big(\sum_{j=1}^d |\pd_j x-s_j\sum_{k=1}^d s_k\pd_k x|^2\Big)^{\frac{d}{2}}\Bigg)\,ds.
        \end{equation*}
        Here, the integral over $\mathbb{S}^{d-1}$ is taken with respect to the rotation-invariant measure $ds$ on $\mathbb{S}^{d-1}$, and $s = (s_1,\ldots,s_d)$.
    \end{thm}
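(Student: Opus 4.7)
The plan is to reduce the identity to a symbol-level computation on a dense subspace and then to extend it to all of $\dot H^1_d(\qt)$ by continuity. By Theorem~\ref{sufficiency}, $\qd$ is bounded from $\dot H^1_d(\qt)$ into $\mcL_{d,\infty}$, so H\"older's inequality for quasi-Banach ideals gives that $x\mapsto |\qd x|^d$ is continuous from $\dot H^1_d$ into $\mcL_{1,\infty}$. Hence $x\mapsto\varphi(|\qd x|^d)$ is continuous for every continuous normalised trace $\varphi$. The right-hand side is continuous in $x$ through continuity of $\tau$, the Minkowski integral inequality in the $s$-variable, and the embedding of $\dot H^1_d(\qt)$ into $L_d(\qt)$ applied to each partial derivative. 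Since self-adjoint trigonometric polynomials are dense in $\dot H^1_d(\qt)$, it suffices to establish the identity when $x$ is such a polynomial.

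For smooth self-adjoint $x$, I would realise $\qd x=i[F,x]$ as an operator on $L_2(\qt)\otimes\Cplx^{2^{\lfloor d/2\rfloor}}$, with $F$ the sign of the Dirac operator $D=\sum_j-\ri\gamma_j\otimes\pd_j$. A computation in the Connes pseudodifferential calculus on $\qt$ shows that $\qd x$ is a classical $\psi$DO of order $-1$ with principal symbol
\begin{equation*}
\sigma_{-1}(\qd x)(\xi)\;=\;|\xi|^{-1}\sum_{j=1}^d e_j(s)\,\pd_j x,\qquad s=\xi/|\xi|,
\end{equation*}
where $e_j(s)=\gamma_j-s_j\sum_k s_k\gamma_k$ obey the Clifford-type relations $\{e_j(s),e_l(s)\}=2(\delta_{jl}-s_j s_l)I$. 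Consequently $|\qd x|^d$ is a positive $\psi$DO of order $-d$ with principal symbol $|\xi|^{-d}(a(s)^*a(s))^{d/2}$, where $a(s)=\sum_j e_j(s)\pd_j x$. Applying the analogue of Connes' trace theorem on $\qt$, valid for every continuous normalised trace on $\mcL_{1,\infty}$, reduces $\varphi(|\qd x|^d)$ to a universal constant multiple of
\begin{equation*}
\int_{\Sp^{d-1}}(\tau\otimes\tr_\gamma)\bigl((a(s)^*a(s))^{d/2}\bigr)\,ds.
\end{equation*}

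The final and most delicate step is to reduce this integral to $c_d\int_{\Sp^{d-1}}\tau(M(s)^{d/2})\,ds$, where $M(s)=\sum_j|\pd_j x-s_j\sum_k s_k\pd_k x|^2$. Using the Clifford anticommutation one checks directly that the scalar Clifford component of $a(s)^*a(s)$ is exactly $M(s)$; however, in the noncommutative case additional higher-grade Clifford components arise from the commutators $[\pd_j x,\pd_l x]$ which are absent in the commutative setting. The main obstacle is to show that these higher-grade contributions, after integration over $\Sp^{d-1}$ and application of $\tau$, collapse into a constant multiple of the scalar contribution and can be absorbed into the constant $c_d$. For even $d$ one can expand $(a(s)^*a(s))^{d/2}$ polynomially and combine self-adjointness of $\pd_j x$, cyclicity of $\tau$, and rotational averaging (Haar integration of the monomials in $s$) to perform this reduction. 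For odd $d$ the power $(\cdot)^{d/2}$ is fractional and must be handled via functional calculus rather than a polynomial expansion, which together with the two independent sources of noncommutativity (the Clifford algebra and the quantum torus) is the principal technical hurdle of the argument.
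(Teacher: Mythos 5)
Your outline follows the same broad route as the paper (reduce to smooth $x$ by density, identify the order $-1$ principal symbol, apply a Connes-type trace theorem on $\qt$, pass to the limit), but it is not a proof: the step you yourself isolate as ``the main obstacle'' --- passing from the matrix-valued symbol $(a(s)^*a(s))^{d/2}$ to the scalar quantity $\tau\big(M(s)^{d/2}\big)$ --- is never carried out, and the mechanism you sketch for it would not work as described. Writing $a(s)=\sum_j \gamma_j\ot b_j(s)$ with $b_j(s)=\pd_j x-s_j\sum_k s_k\pd_k x$ (self-adjoint for $x=x^*$), one has $a(s)^*a(s)=1\ot M(s)+\sum_{j<l}\gamma_j\gamma_l\ot[b_j(s),b_l(s)]$. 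Already for $d=4$, expanding the square and taking $\tr\ot\tau$ leaves, besides $\tau(M(s)^2)$, the terms $\sum_{j<l}\tau\big(|[b_j(s),b_l(s)]|^2\big)$; these are not removed by averaging over $\Sp^{d-1}$ (take $s=e_1$: they reduce to $\tau(|[\pd_jx,\pd_lx]|^2)$, generically nonzero when $\theta\neq 0$), and they are not a universal constant multiple of the scalar contribution, so ``cyclicity plus rotational averaging'' cannot simply absorb the higher-grade Clifford components into $c_d$. For odd $d$ you offer no argument at all. This is exactly where the proof has to do real work, and the proposal leaves it open.

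The paper's route also differs in the operator-theoretic reduction that your sketch glosses over. First, $\qd x$ is not literally a classical pseudodifferential operator: $\sgn(\D)$ must be compared with the genuine operator $\D(1+\D^2)^{-1/2}$ (Lemma \ref{Cwikel-xp}, resting on the Cwikel-type estimates); then one shows $\qd x\in A(1+\D^2)^{-1/2}+\mcL_{\frac d2,\infty}$ for the explicit self-adjoint order-zero operator $A$ of \eqref{def-A} (Lemma \ref{commutator-Dx}), and, via the commutator estimates of Lemma \ref{commutator-AJ}, that such perturbations are invisible to every continuous normalised trace, giving $\varphi(|\qd x|^d)=\varphi\big(|A|^d(1+\D^2)^{-\frac d2}\big)$ (Lemma \ref{T-AJ}). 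Only then is the $C^*$-algebraic Connes trace formula of \cite{MSZ2018} (Theorem \ref{Connes-qt}) applied: the extended principal symbol is a $*$-homomorphism on the $C^*$-closure of the order-zero calculus, so $\rho_{|A|^d}=|\rho_A|^d$ for every $d$, and the odd-$d$ fractional power is handled there, not by any polynomial symbol expansion. None of these reduction lemmas appear in your outline. A smaller point: the continuity of $x\mapsto\varphi(|\qd x|^d)$ on $\dot H^1_d(\qt)$ is not just H\"older; the paper combines Theorem \ref{sufficiency} with the estimate of \cite[Theorem 17]{LMSZ2017} to get $\||\qd x_n|^d-|\qd x|^d\|_{1,\infty}\to 0$. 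In short, the skeleton is right, but the decisive symbol-to-scalar step and the operator-theoretic lemmas supporting the application of the trace theorem are missing.
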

    As an aside we note that it is possible to give a short argument that the integrand above is continuous as a function of $s \in \mathbb{S}^{d-1}$.   
    
    Theorem \ref{trace formula}, in addition to being of interest in its own right, has a couple of corollaries, which to the best of our knowledge are novel.
    
    \begin{cor}\label{trace formula-bound}
    Let $x\in \dot H^1_d(\qt)$ be self-adjoint. Then there are constants $c_d$ and $C_d$ depending only on $d$ such that for any continuous normalised trace $\varphi$ on $\mcL_{1,\infty}$ we have 
    $$c_d \| x\|_{\dot{H}_d^1}^d \leq  \varphi(|\qd x|^d) \leq C_d \| x\|_{\dot{H}_d^1}^d .$$
    \end{cor}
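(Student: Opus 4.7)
The plan is to deduce both inequalities directly from Theorem~\ref{trace formula} via a short algebraic simplification followed by H\"older's inequality. First I would simplify the right-hand side of the trace formula: since $x$ is self-adjoint, each $\pd_j x$ is self-adjoint, so writing $T_s := \sum_{k=1}^d s_k \pd_k x$ and expanding using $\sum_j s_j^2 = 1$ yields
\[
B(s) := \sum_{j=1}^d |\pd_j x - s_j T_s|^2 = \sum_{j=1}^d (\pd_j x)^2 - T_s^2 = |\nabla x|^2 - T_s^2,
\]
where $|\nabla x|^2 := \sum_j (\pd_j x)^2$. Recalling that $\|x\|_{\dot{H}_d^1}^d = \tau(|\nabla x|^d)$, Theorem~\ref{trace formula} reduces the problem to comparing $\int_{\Sp^{d-1}} \tau(B(s)^{d/2})\,ds$ with $\tau(|\nabla x|^d)$.

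The upper bound is immediate: $T_s^2 \geq 0$ forces $0 \leq B(s) \leq |\nabla x|^2$, and since $d/2 \geq 1$ the map $A \mapsto \tau(A^{d/2})$ is monotone on the positive cone, giving $\tau(B(s)^{d/2}) \leq \tau(|\nabla x|^d)$. Integrating over $\Sp^{d-1}$ yields $\varphi(|\qd x|^d) \leq c_d |\Sp^{d-1}|\,\|x\|_{\dot{H}_d^1}^d$.

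For the lower bound I would multiply the identity $|\nabla x|^2 = B(s) + T_s^2$ by $|\nabla x|^{d-2}$, take the trace, and integrate over $\Sp^{d-1}$. The isotropic moment identity $\int_{\Sp^{d-1}} s_k s_l\,ds = \frac{|\Sp^{d-1}|}{d}\delta_{kl}$ evaluates the $T_s^2$-contribution as $\frac{|\Sp^{d-1}|}{d}\tau(|\nabla x|^d)$, leaving
\[
\frac{d-1}{d}|\Sp^{d-1}|\,\tau(|\nabla x|^d) = \int_{\Sp^{d-1}} \tau(|\nabla x|^{d-2} B(s))\, ds.
\]
Noncommutative H\"older with conjugate exponents $d/(d-2)$ and $d/2$ bounds each integrand by $\tau(|\nabla x|^d)^{(d-2)/d}\,\tau(B(s)^{d/2})^{2/d}$, and a second H\"older on $\Sp^{d-1}$ with the same exponents gives
\[
\int_{\Sp^{d-1}} \tau(B(s)^{d/2})^{2/d}\,ds \leq |\Sp^{d-1}|^{(d-2)/d}\left(\int_{\Sp^{d-1}} \tau(B(s)^{d/2})\,ds\right)^{2/d}.
\]
Rearranging and raising to the $d/2$-power produces
\[
\tau(|\nabla x|^d) \leq \left(\frac{d}{d-1}\right)^{d/2} \frac{1}{|\Sp^{d-1}|}\int_{\Sp^{d-1}} \tau(B(s)^{d/2})\,ds,
\]
and combining with Theorem~\ref{trace formula} gives the lower bound with constant proportional to $c_d|\Sp^{d-1}|(d-1)^{d/2}/d^{d/2}$. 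The case $d=2$ is even simpler: then $|\nabla x|^{d-2}$ reduces to the support projection and the first displayed identity alone produces equality, so no H\"older step is needed.

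The main obstacle is really the bookkeeping in the lower-bound chain, and in particular spotting the device of multiplying by $|\nabla x|^{d-2}$ before invoking H\"older. A more direct route via an operator-valued inequality of the form $\int_{\Sp^{d-1}} B(s)^{d/2}\,ds \geq c|\nabla x|^d$ appears harder because $t \mapsto t^{d/2}$ is not operator monotone once $d > 2$.
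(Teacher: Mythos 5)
Your argument is correct, and while your upper bound coincides with the paper's (expand $\sum_j|\pd_j x-s_j\sum_k s_k\pd_k x|^2=|\nabla x|^2-T_s^2\leq|\nabla x|^2$, then use monotonicity of $A\mapsto\tau(A^{d/2})$ and \eqref{Sob-equi-norm}), your lower bound takes a genuinely different route. The paper works at the level of the scalar quantities $X_j=\|\pd_j x-s_j\sum_k s_k\pd_k x\|_d$ and runs a localization argument: it picks the index $l$ minimising $\|\pd_l x\|_d$, restricts the spherical integral to a small cap $B(e_l,\varepsilon)\cap\Sp^{d-1}$, and uses triangle-inequality bookkeeping to show $\sum_j X_j\gtrsim\sum_j\|\pd_j x\|_d$ on that cap, which yields the bound with an unspecified constant $c_{d,\varepsilon}$. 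You instead stay at the operator level, use the exact second-moment identity $\int_{\Sp^{d-1}}s_ks_l\,ds=\tfrac{|\Sp^{d-1}|}{d}\delta_{kl}$ to evaluate $\int\tau(|\nabla x|^{d-2}T_s^2)\,ds$ exactly, and then apply noncommutative H\"older (with exponents $\tfrac{d}{d-2},\tfrac d2$) plus H\"older on the sphere; this is clean, gives the explicit constant $(\tfrac{d}{d-1})^{d/2}|\Sp^{d-1}|^{-1}$, and even an exact identity when $d=2$, whereas the paper's argument avoids the duality device (the multiplication by $|\nabla x|^{d-2}$, which is indeed the key trick to spot) at the cost of a less transparent cap estimate. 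Two small points to tidy: $\|x\|_{\dot H^1_d}^d$ is only \emph{equivalent} to $\tau(|\nabla x|^d)$ (by \eqref{Sob-equi-norm}), not equal, which is harmless since the corollary allows $d$-dependent constants; and in the rearrangement step you should note that the case $\tau(|\nabla x|^d)=0$ is trivial before dividing by $\tau(|\nabla x|^d)^{(d-2)/d}$.
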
  
    
    As a converse to Theorem \ref{sufficiency}, we prove our third main result: the necessity of the condition {$x \in \dot H^1_d(\qt)$} for $\qd x \in \mcL_{d,\infty}$.
    \begin{thm}\label{necessity}
        Let {\hl$x \in L_2(\qt)$}. If $\qd x$ {\hl has bounded extension in $\mcL_{d,\infty}$ then $x \in \dot H^1_d(\qt)$.}
    \end{thm}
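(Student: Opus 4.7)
The plan is to approximate $x$ by trigonometric polynomials $x_N$ via Fej\'er averaging, transfer the $\mcL_{d,\infty}$-bound on $\qd x$ to these approximants, use Corollary~\ref{trace formula-bound} to obtain a uniform $\dot H^1_d$-bound, and then pass to a weak limit. Concretely, let $\alpha_t$ denote the gauge action of $\T^d$ on $\qt$ (whose infinitesimal generators are the partial derivatives $\pd_j$), let $\phi_N$ be the $d$-dimensional Fej\'er kernel, and set $x_N := \int_{\T^d} \phi_N(t)\,\alpha_t(x)\,dt$. Because $\phi_N$ is a trigonometric polynomial of bounded degree, each $x_N$ is a finite polynomial in the generators of $\qt$, hence smooth; in particular $x_N \in \dot H^1_d(\qt)$, and $x_N \to x$ in $L_2(\qt)$.

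The crux is the estimate $\|\qd x_N\|_{\mcL_{d,\infty}} \leq \|\qd x\|_{\mcL_{d,\infty}}$. Lift $\alpha$ to a strongly continuous unitary action $U_t$ on the spinor Hilbert space $H$. Since $D = \sum_j \gamma_j\otimes \pd_j$ is diagonalised in the Fourier basis of $\qt$ (acting on the mode indexed by $k$ as the constant matrix $\sum_j k_j\gamma_j$), we have $[D,U_t]=0$, and hence $[F,U_t]=0$. Therefore $\qd(\alpha_t(x)) = U_t(\qd x) U_t^*$, and
\begin{equation*}
\qd x_N = \int_{\T^d} \phi_N(t)\, U_t(\qd x) U_t^*\,dt.
\end{equation*}
The map $T \mapsto \int \phi_N(t)\,U_t T U_t^*\,dt$ is a convex average (with respect to a probability measure) of unitary conjugations, hence unital completely positive on $B(H)$; since $\mcL_{d,\infty}$ is a fully symmetric Banach ideal (using $d \geq 2$), such a map contracts its norm, which yields the desired estimate.

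Decompose $x_N = y_N + \ri z_N$ into real and imaginary parts (both self-adjoint). Since $\qd(w^*) = (\qd w)^*$, both $\qd y_N$ and $\qd z_N$ inherit the same $\mcL_{d,\infty}$-bound (singular values being preserved by taking adjoints). Fix any continuous normalised trace $\varphi$ on $\mcL_{1,\infty}$, and note that $\varphi(|T|^d) \leq \|\varphi\|\cdot\|T\|_{\mcL_{d,\infty}}^d$. Then Corollary~\ref{trace formula-bound} applied to $y_N$ and $z_N$ gives
\begin{equation*}
\|y_N\|_{\dot H^1_d}^d + \|z_N\|_{\dot H^1_d}^d \lesssim \|\qd x\|_{\mcL_{d,\infty}}^d,
\end{equation*}
so $\{x_N\}$ is uniformly bounded in $\dot H^1_d(\qt)$. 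Because $L_d(\qt)$ is reflexive, for each $j$ the sequence $\{\pd_j x_N\}$ has a weakly convergent subsequence in $L_d(\qt)$; the $L_2$-convergence $x_N \to x$ identifies the weak limit distributionally with $\pd_j x$, and weak lower semicontinuity of the norm finally gives $x \in \dot H^1_d(\qt)$.

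The principal obstacle is the transfer step: verifying the commutation $[F,U_t]=0$ and the contractivity of a probability-measure average of unitary conjugations on $\mcL_{d,\infty}$. Both reduce to standard facts (the first to $D$ being a Fourier multiplier, the second to full symmetry of the ideal for $d\geq 2$), but it is exactly here that the specific structure of the Dirac-type $F$ on $\qt$ enters. The remaining approximation and weak-compactness steps are routine.
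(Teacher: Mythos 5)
Your proposal is correct and follows essentially the same route as the paper: Fejér (square) means $x_N$, transfer of the $\mcL_{d,\infty}$-bound to $\qd x_N$ via the fact that the gauge action commutes with $\sgn(\D)$ so that convolution becomes an average of unitary conjugations (the paper's Lemma \ref{commutator-convo}, proved by citing \cite[Lemma~18]{LMSZ2017}), then Corollary \ref{trace formula-bound} applied to the self-adjoint parts of the polynomial approximants, and finally reflexivity of $L_d(\qt)$ plus distributional identification of the weak limits $\partial_j x \in L_d(\qt)$. The only differences are cosmetic (you split into self-adjoint parts after averaging rather than before, and you justify the averaging bound by full symmetry of $\mcL_{d,\infty}$ for $d\geq 2$ rather than by the cited lemma), so the argument matches the paper's.
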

    {\hl The {\it a priori} assumption that $x \in L_2(\qt)$ can be justified as follows: $L_2(\qt)$ is the smallest class of $x$ where we can define $\qd x$ in a natural way. Furthermore, 
    one can motivate this assumption by noting that an $L_2$-condition is necessary and sufficient for Connes' trace theorem to hold in the commutative setting, see \cite[Theorem 2.5]{LPS2010} for details.}

    Since $\varphi$ vanishes on the trace class $\mcL_1$, Corollary \ref{trace formula-bound} immediately yields the following noncommutative version of the $p\leq d$ component of \cite[Theorem 1]{JW1982}:
    \begin{cor}\label{triviality}
        If $x \in {\hl L_2(\qt)}$ and $\qd x \in \mcL_{p}$, for $p \leq d$, then $x$ is a constant.
    \end{cor}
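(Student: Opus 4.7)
The plan is to combine Theorem \ref{necessity} with the lower bound in Corollary \ref{trace formula-bound}, exploiting the standard fact that every continuous normalised trace on $\mcL_{1,\infty}$ vanishes on the trace-class ideal $\mcL_1$. Since $p\le d$ gives the chain of inclusions $\mcL_p \subseteq \mcL_d \subseteq \mcL_{d,\infty}$, the hypothesis $\qd x \in \mcL_p$ together with $x \in L_2(\qt)$ and Theorem \ref{necessity} will immediately place $x$ in $\dot H^1_d(\qt)$.

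Next I would reduce to the self-adjoint case. Writing $x = x_1 + \ri x_2$ with $x_1,x_2$ self-adjoint, the identity $(\qd x)^* = \qd(x^*)$, which follows from $\qd x = \ri[F,x]$ and $F=F^*$, makes $\qd x_1$ and $\qd x_2$ linear combinations of $\qd x$ and $(\qd x)^*$, so both lie in $\mcL_p$. It thus suffices to show that every self-adjoint $y \in L_2(\qt)$ with $\qd y \in \mcL_p$ is a scalar multiple of the identity.

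For such $y$, the inclusion $\mcL_p \subseteq \mcL_d$ yields $|\qd y|^d \in \mcL_1$, and fixing any continuous normalised trace $\varphi$ on $\mcL_{1,\infty}$ (for instance a Dixmier trace) I would conclude $\varphi(|\qd y|^d) = 0$, since such traces vanish on $\mcL_1$. Corollary \ref{trace formula-bound} would then force
\begin{equation*}
c_d\,\|y\|_{\dot H^1_d}^d \leq \varphi(|\qd y|^d) = 0,
\end{equation*}
hence $\|y\|_{\dot H^1_d}=0$. By the definition of the homogeneous Sobolev seminorm this says $\partial_j y = 0$ for all $j$, so $y \in \Cplx\cdot 1$. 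Applying this conclusion to $y=x_1$ and $y=x_2$ shows that $x$ itself is constant. The main input here is of course Theorem \ref{necessity}; once that is available, the remaining steps are essentially bookkeeping together with the classical fact that continuous normalised traces on $\mcL_{1,\infty}$ kill $\mcL_1$.
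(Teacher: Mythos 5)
Your proof is correct and follows essentially the same route the paper intends: invoke Theorem \ref{necessity} to place $x$ in $\dot H^1_d(\qt)$, reduce to the self-adjoint case, and then combine Corollary \ref{trace formula-bound} with the vanishing of $\varphi$ on $\mcL_1$ to force $\|x\|_{\dot H^1_d}=0$. You simply fill in the details (the inclusion chain $\mcL_p\subseteq\mcL_d\subseteq\mcL_{d,\infty}$, the decomposition $x=x_1+\ri x_2$, and the identity $(\qd x)^*=\qd(x^*)$) that the paper compresses into a one-sentence remark.
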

    Indeed, the $p \leq d$ component of \cite[Theorem 1]{JW1982} is an immediate and simple consequence of Corollary \ref{triviality} when $\theta = 0$.
    
    A further corollary of Theorem \ref{trace formula} is that $\varphi(|\qd x|^d)$ does not depend on the choice of continuous normalised trace $\varphi$. This
    implies certain asymptotic properties of the singular numbers of $\qd x$, beyond being merely in $\mcL_{d,\infty}$ \cite{KLPS,SSUZ2015}.
    
\subsection{Comparison to the commutative case}\label{commutative_discussion_subsection}
    Take $x \in \dot{H}^1_d(\qt)$. Consider the right hand side of the trace formula in Theorem \ref{trace formula},
    \begin{equation*}
        c_d\int_{\mathbb{S}^{d-1}} \tau\Bigg(\Big(\sum_{j=1}^d |\pd_j x-s_j\sum_{k=1}^d s_k\pd_k x|^2\Big)^{\frac{d}{2}}\Bigg)\,ds\,.
    \end{equation*}
    Define $\nabla x = (\partial_1 x,\partial_2x,\ldots,\partial_d x)$, and
    \begin{equation*}
        \|\nabla x\|_{2} := \Big(\sum_{j=1}^d |\pd_j x|^2\Big)^{\frac{1}{2}}.
    \end{equation*}
    In the commutative case (when $\theta = 0$), $x$ is a scalar valued function and $\|\nabla x\|_2^d$ coincides with the integrand in \eqref{torus trace formula}. Assuming commutativity,
    we can define the unit vector $u = \frac{\nabla x}{\|\nabla x\|_2}$ and take out a factor of $\|\nabla x\|_{2}^{\frac{d}{2}}$ to get:
    \begin{equation*}
        \tau\Bigg(\int_{\mathbb{S}^{d-1}} \Big(\sum_{j=1}^d |\pd_j x-s_j\sum_{k=1}^d s_k\pd_k x|^2\Big)^{\frac{d}{2}}\,ds\Bigg) = \tau\Bigg(\|\nabla x\|_2^{\frac{d}{2}}\int_{\mathbb{S}^{d-1}} \Big(\sum_{j=1}^d|u_j-s_j\sum_{k=1}^d s_k u_k |^2\Big)^{\frac{d}{2}}\,ds  \Bigg)
    \end{equation*}
    (where $u = (u_1,u_2,\ldots,u_d)$, and the interchange of $\tau$ and the integral is easily justified by Fubini's theorem in the commutative case).
    However, since the measure $ds$ on $\mathbb{S}^{d-1}$ is invariant under rotations, we can choose coordinates $\{e_1,\ldots,e_d\}$ for $\real^d$ so that $u = e_1$, and then:
    \begin{equation*}
        b_d := \int_{\mathbb{S}^{d-1}} \Big(\sum_{j=1}^d|u_j-s_j\sum_{k=1}^d s_k  u_k  |^2\Big)^{\frac{d}{2}}\,ds
    \end{equation*}
    is independent of $u$, and is a constant scalar. Thus in the commutative case we have:
    \begin{equation*}
        \varphi(|\qd x|^d) = c_db_d\tau(\|\nabla x\|_2^d).
    \end{equation*}
    This recovers \eqref{torus trace formula} upon taking $k_d = c_db_d$.
    In the noncommutative case, we cannot take out a factor of $\|\nabla x\|_2^2$, and this explains why the form of the right hand side
    of Theorem \ref{trace formula} is more complicated than $k_d\tau(\|\nabla x\|_2^d)$.

\section{Notation}\label{notation section}

\subsection{Operators, Ideals and traces}\label{operator notation subsection}
    The following material concerning operator ideals and traces is standard. For more details we refer the reader to \cite{LSZ2012, Simon1979}.
    Let $H$ be a complex separable Hilbert space, and let $\Bc(H)$ denote the set of bounded operators on $H$, and let $\cK(H)$ denote the ideal of compact operators on $H$. Given $T\in \cK(H)$, the sequence of singular values $\mu(T) = \{\mu(k,T)\}_{k=0}^\infty$ is defined as:
    \begin{equation*}
        \mu(k,T) = \inf\{\|T-R\|\;:\;\rank(R) \leq k\}.
    \end{equation*}
    Equivalently, $\mu(T)$ is the sequence of eigenvalues of $|T|$ arranged in non-increasing order with multiplicities.
    
    Let $p \in (0,\infty).$ The Schatten class $\mcL_p$ is the set of operators $T$ in $\cK(H)$ such that $\mu(T)$ is $p$-summable, i.e. in the sequence space $\ell_p$. If $p \geq 1$ then the $\mcL_p$
    norm is defined as:
    \begin{equation*}
        \|T\|_p := \|\mu(T)\|_{\ell_p} = \left(\sum_{k=0}^\infty \mu(k,T)^p\right)^{1/p}.
    \end{equation*}
    With this norm $\mcL_p$ is a Banach space, and an ideal of $\Bc(H)$.
    
    Analogously, the weak Schatten class $\mcL_{p,\infty}$ is the set of operators $T$ such that $\mu(T)$ is in the weak $L_p$-space $\ell_{p,\infty}$, with quasi-norm:
    \begin{equation*}
        \|T\|_{p,\infty} = \sup_{k\geq 0} (k+1)^{1/p}\mu(k,T) < \infty.
    \end{equation*}
    As with the $\mcL_p$ spaces, $\mcL_{p,\infty}$ is an ideal of $\Bc(H)$. We also have the following form
    of H\"older's inequality,
    \begin{equation}\label{weak-type-holder}
        \|TS\|_{r,\infty} \leq c_{p,q}\|T\|_{p,\infty}\|S\|_{q,\infty}
    \end{equation}
    where $\frac{1}{r}=\frac{1}{p}+\frac{1}{q}$, for some constant $c_{p,q}$.
    
    Of particular interest is $\mcL_{1,\infty}$, and we are concerned with traces on this ideal. For more details, see \cite[Section 5.7]{LSZ2012} and \cite{SSUZ2015}. A functional $\varphi:\mcL_{1,\infty}\to \Cplx$ is called a trace if it is unitarily invariant. That is, for all unitary operators
    $U$ and $T\in \mcL_{1,\infty}$ we have that $\varphi(U^*TU) = \varphi(T)$. It can then be shown that for all bounded operators $B$ we have $\varphi(BT)=\varphi(TB).$
    
    An important fact about traces is that any trace $\varphi$ on $\mcL_{1,\infty}$ vanishes on $\mcL_1$ \cite[Theorem~5.7.8]{LSZ2012}. A trace $\varphi$ is called continuous if it is continuous with respect to the $\mcL_{1,\infty}$ quasi-norm. It is known that not all traces on $\mcL_{1,\infty}$ are continuous \cite[Remark~3.1(3)]{LSZ2018}. 
    Within the class of continuous traces on $\mcL_{1,\infty}$ there are the well-known Dixmier traces \cite[Chapter 6]{LSZ2012}.
    
    Finally, we say that a trace $\varphi$ on $\mcL_{1,\infty}$ is normalised if $\varphi$ takes the value $1$ on any compact positive operator with eigenvalue sequence $\{\frac{1}{n+1}\}_{n=0}^\infty$ (any two such
    operators are unitarily equivalent, and so the particular choice of operator is inessential).

\subsection{Noncommutative Tori}\label{nc tori subsection}
    Harmonic analysis on noncommutative tori is an established subject. The exposition here closely follows \cite{XXY2018}, and for sake of brevity we refer the reader to \cite{XXY2018}
    for a detailed exposition of the topic and provide here only the definitions relevant to this text.
\subsubsection{Basic definitions}
    We fix an integer $d > 1$ and $\theta = \{\theta_{j,k}\}_{j,k = 1}^d$, a $d\times d$ antisymmetric real matrix. The $C^*$-algebra of continuous functions on the noncommutative torus, denoted $C(\qt)$, is the universal $C^*$-algebra on $d$ unitary generators $U_1,\ldots,U_d$
    which satisfy:
    \begin{equation*}
        U_jU_k = e^{2\pi \ri \theta_{j,k}}U_kU_j,\quad 1\leq j,k\leq d.
    \end{equation*}
    Given $n = (n_1,\ldots,n_d) \in \ent^d$, we adopt the shorthand notation:
    \begin{equation*}
        U^n := U_1^{n_1}U_2^{n_2}\cdots U_d^{n_d}.
    \end{equation*}
    There exists an action $\alpha$ of the torus group $\T^d$ on $C(\qt)$, given on a generator $U_j$ by:
    \begin{equation}\label{action-T-qt}
        \alpha_z(U_j) = z_jU_j, \quad z = (z_1,z_2,\ldots, z_d)\in \T^d.
    \end{equation}
    The action $\alpha$ can be extended to a norm-continuous group of automorphisms of $C(\qt)$. There is a distinguished trace state $\tau$
    on $C(\qt)$, which may be constructed in several ways, one of which is by averaging over $\alpha$ as follows:
    It can be shown that the fixed point subalgebra of $C(\qt)$ under the action of $\alpha$ is exactly the trivial subalgebra $\Cplx 1$. Hence if $x \in C(\qt)$ then
    averaging over $\T^d$ with respect to the Haar measure $m$ on $\T^d$:
    \begin{equation*}
        \int_{\T^d} \alpha_z(x)\,dm(z)
    \end{equation*}
    yields a multiple of the identity element. Defining
    \begin{equation*}
        \tau(x)1 = \int_{\T^d} \alpha_z(x)\,dm(z)
    \end{equation*}
    yields the canonical trace state $\tau$ on $C(\qt)$. Given $\tau$ we can now define the GNS Hilbert space $L_2(C(\qt),\tau)$, which we denote $L_2(\qt)$, and we identify $C(\qt)$ as an algebra of bounded
    operators on $L_2(\qt)$, where $x \in C(\qt)$ acts on $\xi \in L_2(\qt)$ by left multiplication. Taking the weak operator topology closure $C(\qt)''$ in $\Bc(L_2(\qt))$ yields a von Neumann algebra, which we denote $L_\infty(\qt)$. 
    
    The $L_p$-spaces for $p \in [1,\infty)$ on $\qt$ are then defined as the operator $L_p$-spaces \cite{PX2003,LSZ2012} on $(L_\infty(\qt),\tau)$,
    \begin{equation*}
        L_p(\qt) := \mcL_p(L_\infty(\qt),\tau).
    \end{equation*} 
    
    For $x \in L_1(\qt)$ and $n\in \ent^d$, we define:
    \begin{equation*}
        \wh{x}(n) = \tau(x(U^{n})^{*}).
    \end{equation*}
    
    By the definition of $\tau$, we see that $\tau(U_n) = \delta_{n,0}$, and then standard Hilbert space arguments show that any $x \in L_2(\qt)$ can be written as an $L_2$-convergent
    series:
    \begin{equation*}
        x = \sum_{n \in \ent^d} \wh{x}(n)U^n,
    \end{equation*}
 with 
\beq\label{Plancherel-qt}
\|x\|_2^2 = \sum_{n\in \ent^d} | \wh x(n) |^2 .
\eeq

    The space $\Sc(\qt)$ is defined to be the subset of $x\in C(\qt)$ such that the sequence of Fourier coefficients $\{\wh{x}(n)\}_{n\in \ent^d}$
    has rapid decay (i.e., the sequence $\{|\wh{x}(n)|\}_{n\in \ent^d}$ is eventually dominated by the reciprocal of any polynomial). We may consider $\Sc(\qt)$
    as the space of smooth functions on $\qt$, since in the commutative setting this space corresponds with the space of $C^\infty$ functions. There is also a canonical Fr\'echet 
    topology on $\Sc(\qt)$, and the space {\hl $\Dist'(\qt)$}, called the space of distributions on $\qt$, is defined to be the topological dual of $\Sc(\qt)$.

\subsubsection{Calculus for quantum tori}\label{calculus definition subsubsection}
    Many aspects of harmonic analysis on $\T^d$ carry over to $\qt$. For example we may define the partial differentiation operators $\pd_j$, $j = 1,\cdots,d$ by:
    \begin{equation*}
        \pd_j (U^n) = 2\pi \ri n_jU^n,\quad n = (n_1,\ldots,n_d) \in \ent^d.
    \end{equation*}
    Every partial derivation $\pd_j$ can be viewed a densely defined closed (unbounded)
    operator on $L_2(\qt)$, whose adjoint is equal to $-\pd_j$.  Let $\Delta=\partial_1^2+\cdots+\partial_d^2$ be the Laplacian. Then $\Delta = - (\pd_1^* \pd_1 + \cdots +\pd_d^* \pd_d)$, so $-\Delta$ is a positive operator on $L_2(\qt)$ with spectrum equal to $\{4\pi^2 |n|^2  : n \in \ent^d\}$. As in the Euclidean case, we let $D_j  = -\ri \pd_j$, which is then self-adjoint.
    Given $n =(n_1,\cdots,n_d)\in \mathbb{N}_0^d$ ($\mathbb{N}_0$ denoting the set of nonnegative integers), the associated partial derivation $D^n$ is defined to be
    $D_1^{n_1}\cdots D_d^{n_d}$. The order of $D^n$ is   $|n|_1=n_1+\cdots+ n_d$. 
    By duality, the derivations transfer to  $\Dist'(\qt)$  as well.

    For $\al \in \real$, denote by $J^\al$ the $\al$-order Bessel potential $(1 -   \Delta )^{\frac{\al}{2}}$.
    The potential (or fractional)  Sobolev space of order $\al \in \real$  is defined to be
    \begin{equation}\label{bessel-sobolev-space-definition}
        H_p^\alpha(\mathbb{T}_{\theta}^d)=\big\{ x\in\Dist'(\qt) : J^\al x\in L_p(\mathbb{T}_{\theta}^d) \big\},
    \end{equation}
    equipped with the  norm
    $$\|x\|_{H_p^\alpha}=\|J^\al x\|_p\,.$$ Since $J^0$ is the identity, $H_p^0(\qt) = L_p(\qt)$.
    {\hl 
    As in the classical case, if $\alpha$ is a non-negative integer then $H^{\alpha}_p(\qt)$ admits an equivalent norm in terms of the sum of the $p$-norms of the partial derivatives of order up to $\alpha$.
    To be explicit, the Sobolev space of order $k\in \mathbb{N}$  on $\mathbb{T}_{\theta}^d$ may be described as:
    $$H_p^k(\mathbb{T}_{\theta}^d)= \big\{ x\in\Dist'(\qt) :  D^n x \in L_p(\mathbb{T}_{\theta}^d) \textrm{ for each }n\in \mathbb {N}_0^d \textrm{ with } |n |_1\leq k \big\},$$
    equipped with the  norm
    $$\|x\|_{H_p^k}=\Big(\sum_{0\leq |n |_1\leq k}\|D^n x\|_{p}^p\Big)^{\frac{1}{p}}.$$
    
        The equivalence of the above norm and the Bessel potential norm $\|J^{k} x\|_p$ is a well-established fact in the theory of harmonic analysis on $\qt$, being proved in the $p=2$ case by \cite[Theorem 2.1]{Spera1992}
        and a later proof for general $p$ can be found as \cite[Theorem 2.9]{XXY2018}.
    }

   In this paper, we will mainly use the ``homogeneous" Sobolev space $\dot{H}^1_p (\qt)$ and the potential Sobolev spaces $H_2^\alpha(\mathbb{T}_{\theta}^d)$. The norm of $\dot{H}^1_p (\qt)$ with $ p \geq 2$, may be described in the following equivalent forms:
    \beq\label{Sob-equi-norm}
    \|x\|_{\dot{H}^1_p}
    = \Big(\sum_{j=1}^d \|\pd_j x\|_p^p \Big)^{\frac{1}{p}}\approx \sum_{j=1}^d \|\pd_j x\|_p \approx \| (\sum_{j=1}^d    |\pd_j x |^2 ) ^{\frac 1 2 } \|_p ,
    \eeq
    where the relevant constants depend only on $d$ and $p$. Then $\dot{H}^{1}_p(\qt)$ may be defined as the subspace of $\Dist'(\qt)$ for which the above norm is finite. Note that the difference between $\dot{H}^{1}_p(\qt)$ and $H^1_p(\qt)$
    is that for $\dot{H}^{1}_p(\qt)$ we do not assume that the $L_p$-norm is finite. 
    For any $x\in H^1_p (\qt)$, we have the following Poincar\'e type inequality
    \beq\label{Poincare}
    \|x- \wh x (0) \|_p \leq  C_{p,d} \|x\|_{\dot{H}^1_p} .
    \eeq
    See \cite[Theorem~2.12]{XXY2018}. For every $\al \in \real$, the space $H_2^\al(\T_\theta^d)$ is a Hilbert space with the inner product
    $$\lan x , y \ran = \tau ( J^\al y^*  J^\al  x )  .$$
    {\hl 
    It is proved in \cite[Theorem 3.3]{Spera1992} and \cite[Proposition 9.2]{HLP2018a} for arbitrary real $\al, \beta \in \mathbb{R}$ with $\al > \beta$, the embedding
        \beq\label{Sob-cpt-emb}
            H_2^\al(\T_\theta^d)    \hookrightarrow H_2^\beta (\T_\theta^d) \quad \mbox{is compact}.
        \eeq
    }

  The Dirac operator $\D$ {\hl (more precisely, the spin-Dirac operator)} is defined in terms of $\gamma$ matrices in direct analogy to commutative tori. Define $N = 2^{\lfloor \frac{d}{2}\rfloor}$ and select $N\times N$ complex self-adjoint matrices
    $\{\gamma_1,\ldots,\gamma_d\}$ satisfying $\gamma_j\gamma_k +\gamma_k\gamma_j = 2\delta_{j,k}1$, and define:
    \begin{equation*}
        \D = \sum_{j=1}^d \gamma_j\otimes D_j
    \end{equation*}
    as an unbounded, densely defined linear operator on the Hilbert space $\Cplx^N\otimes L_2(\qt)$. {\hl This definition coincides with \cite[Definition 12.14]{green-book}. }
    
    Since all $D_j$'s
are self-adjoint, $\D$ is also self-adjoint. We then define the sign of $\D$ via the Borel functional calculus, which can be expressed as 
$$\sgn (\D)    = \sum_{j=1}^d \g_j \ot  \frac{D_j }{\sqrt{ D_1^2 + D_2^2 +\cdots + D_d^2}} \,.$$

Given $x\in L_\8(\qt)$, denote by $M_x: y \mapsto xy$ the operator of left multiplication on $L_2(\qt)$. The operator $1\ot M_x$ is a bounded linear operator on $\com^N \ot  L_2(\qt) $, where $1$ denotes the identity operator on $\com^N$. The commutator
$$\qd x : = \ri [\sgn(\D) , 1\ot M_x],\quad x \in L_\infty(\qt)$$
denotes the quantised differential on quantum tori.

{\hl 
    On the other hand, if $x$ is not necessarily bounded we may still define $\qd x$ on the dense subspace $C^\infty(\qt)\otimes \com^N$ as follows. Suppose that $x \in L_2(\qt)$. Then if $\eta \in C^\infty(\qt)\otimes \com^N$, we will
    have $(1\otimes M_x)\eta \in L_2(\qt)\otimes \com^N$. Moreover, $\sgn(D)\eta$ is still in $C^{\infty}(\qt)\otimes \com^N$ since by definition an element of $C^{\infty}(\qt)$ has Fourier coefficients of rapid decay, and $\sgn(D)$ is represented as a
    Fourier multiplier with bounded symbol.
    Thus the expression:
    \begin{equation*}
        (\qd x)\eta := \ri\sgn(D)(1\otimes M_x)\eta - \ri(1\otimes M_x)\sgn(D)\eta
    \end{equation*}
    is a well-defined element of $L_2(\qt)\otimes \com^N$ for all $\eta \in C^{\infty}(\qt)\otimes \com^N$. 
}

\subsubsection{Fourier multipliers for quantum tori}
    Let $g$ be a bounded scalar function on $\ent^d$. For $x \in L_2(\qt)$, the Fourier multiplier $T_g$ with symbol $g$ is defined
    on $x$ by:
    \begin{equation}\label{def-Fourier-Z}
        T_gx = \sum_{n \in \ent^d} g(n)\wh{x}(n)U^n.
    \end{equation}
    By virtue of the Plancherel identity \eqref{Plancherel-qt}, $T_g$ indeed defines a bounded linear operator on $L_2(\qt)$ and the above series converges in the $L_2$-sense.
    If $g$ is unbounded, we may define $T_g$ on the dense subspace of $L_2(\qt)$ of those $x$ with finitely many non-zero Fourier coefficients.
    
    An equivalent perspective on Fourier series is to consider a function $\phi \in L_1(\T^d)$ on the commutative torus. We may then define
    the convolution of $\phi$ with $x \in L_2(\qt)$ by:
    \begin{equation*}
        \phi\ast x =\int_{\T^d}\al_w(x)\phi(w) dw.
    \end{equation*}
    
    In terms of Fourier coefficients, we have:
    \begin{equation*}
        \phi\ast x = T_{\widehat{\phi}}x.
    \end{equation*}
    
    Fourier multipliers for quantum tori were studied in detail in \cite[Chapter 7]{XXY2018} (there, $T_{g}$ was denoted by $M_g$).
    From the perspective of functional calculus, we may also write:
    \begin{equation*}
        T_g = g(\frac{1}{2\pi \ri}\partial_1, \frac{1}{2\pi \ri}\partial_2,\ldots,\frac{1}{2\pi \ri}\partial_d).
    \end{equation*}

 The above defined derivatives $D^\al$, Laplacian $\Delta$, and Bessel potential $J^\al$ may all be viewed as Fourier multipliers: the symbol of $D_j$ is $2\pi \xi_j$; the symbol of $\Delta$ is $-|2\pi \xi |^2$; and the symbol of $J^\al$ is $(1+ |  2\pi \xi  |^2) ^{\frac{\al}{2}}$. We will denote by $\Jx$ the function $(1+ |   \xi  |^2) ^{\frac{1}{2}}$ in the sequel.
 
    A far reaching extension of the notion of a Fourier multiplier is a pseudodifferential operator. We outline the pseudodifferential operator theory for the noncommutative torus in Section \ref{sec-pdo}.

\section{Cwikel-type estimates for quantum tori}
    In the classical, commutative setting, Cwikel estimates are bounds on the singular values of operators of the form:
    \begin{equation*}
        M_fg(-\ri\nabla)
    \end{equation*}
    where $f$ and $g$ are essentially bounded functions on $\real^d$, and $M_f$ and $g(-\ri\nabla)$ denote pointwise multiplication and Fourier multiplication 
    on $L_2(\real^d)$ respectively (see e.g. \cite[Chapter 4]{Simon1979} and \cite{Cwikel1977}).
    
    In the setting of noncommutative tori, we instead consider operators of the form $M_xT_g$, where $x\in L_{\infty}(\qt)$ and $g \in \ell_{\infty}(\ent^d)$. We can obtain 
    the following as a special case of \cite{LeSZ2017}:
    
    \begin{thm}\label{Cwikel-type}
        \begin{enumerate}[\rm (i)]
            \item\label{L_p cwikel} If $x\in L_p(\qt)$ and $g\in \ell_p(\ent^d)$ with $2\leq p <\8$, then $M_x \,T_g$ is in $\mcL_p$ and 
                $$\|M_x\, T_g\|_{\mcL_p} \leq C_p \|x\|_p \|g\|_p.$$
            \item\label{weak L_p cwikel} If $x\in L_p(\qt)$ and $g\in \ell_{p,\8}(\ent^d)$ with $2< p <\8$, then $M_x \,T_g$ is in $\mcL_{p,\8} $ and 
                $$\|M_x\, T_g\|_{\mcL_{p,\8}} \leq C_p \|x\|_p \|g\|_{p,\8}.$$
        \end{enumerate}
    \end{thm}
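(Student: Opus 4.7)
My plan is to reduce Theorem \ref{Cwikel-type} to an explicit Hilbert--Schmidt computation at the base case $p=2$ and then propagate to larger $p$ by interpolation, aligning with the abstract framework of \cite{LeSZ2017} that the theorem is said to specialise. The starting point is the observation that $T_g$ is diagonal in the orthonormal basis $\{U^n\}_{n\in\ent^d}$ of $L_2(\qt)$, acting as $T_g U^n = g(n) U^n$. Since $\tau$ is a trace on $\qt$, the invariance $\|xU^n\|_2 = \|x\|_2$ holds for every $n$, so a direct Plancherel calculation gives
\[
\|M_x T_g\|_2^2 = \sum_{n\in\ent^d} |g(n)|^2 \|xU^n\|_2^2 = \|x\|_2^2 \|g\|_{\ell_2}^2,
\]
which is the case $p=2$ of part (\ref{L_p cwikel}) with constant $1$.

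For general $p \in (2,\infty)$ in part (\ref{L_p cwikel}), I would record the trivial operator-norm endpoint $\|M_x T_g\|_{\Bc(L_2(\qt))} \leq \|x\|_\infty \|g\|_{\ell_\infty}$ and then apply bilinear complex interpolation to the map $(x,g)\mapsto M_x T_g$ between the couples $(L_2(\qt), L_\infty(\qt))$, $(\ell_2(\ent^d), \ell_\infty(\ent^d))$, and $(\mcL_2, \Bc(L_2(\qt)))$. Using the standard identifications $[L_2, L_\infty]_\theta = L_p(\qt)$, $[\ell_2, \ell_\infty]_\theta = \ell_p$, and $[\mcL_2, \Bc]_\theta = \mcL_p$ at $\tfrac{1}{p} = \tfrac{1-\theta}{2}$, this yields the desired $\mcL_p$ bound. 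For part (\ref{weak L_p cwikel}), the same scheme with bilinear real interpolation --- combined with the identification of $\mcL_{p,\infty}$ as a real interpolation space between $\mcL_2$ and $\Bc(L_2(\qt))$ --- produces the weak-Schatten bound; the loss of the lower endpoint at $p=2$ is consistent with the standard behaviour of real interpolation. Alternatively, both conclusions can be read off directly from the abstract semifinite Cwikel estimates of \cite{LeSZ2017} once one verifies that $(L_\infty(\qt),\tau)$ together with the commutative von Neumann algebra of Fourier multipliers $\{T_g\}$ fits into that framework.

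The main obstacle I anticipate is the real-interpolation step for (\ref{weak L_p cwikel}): one must interpolate a \emph{bilinear} map simultaneously in both variables while tracking how weak-$\ell_p$ on the symbol side pairs with weak-Schatten on the operator side. Bilinear real interpolation is subtler than its complex counterpart, and the bookkeeping of interpolation constants requires care. Invoking \cite{LeSZ2017} circumvents this cleanly by giving an argument that handles all $p>2$ uniformly, at the cost of verifying compatibility of hypotheses rather than proving an interpolation bound by hand; if one prefers a self-contained derivation, the Hölder-type inequality \eqref{weak-type-holder} combined with the $p=2$ computation offers an alternative route by factorising $M_x T_g$ through intermediate weak-Schatten classes.
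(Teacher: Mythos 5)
Your base case and your fallback route are exactly the paper's proof: the paper computes $\|M_xT_g\|_{\mcL_2}=\|x\|_2\|g\|_{\ell_2}$ by the same Plancherel/traciality argument, and then feeds this single $L_2$ estimate into \cite[Corollary 3.5]{LeSZ2017}, which upgrades it to $\|M_xT_g\|_{E(\Bc(L_2(\qt)))}\leq C_E\|x\otimes g\|_{E(L_\infty(\qt)\otimes\ell_\infty(\ent^d))}$ for every symmetric space $E$ that is an interpolation space for $(L_2,L_\infty)$ with the Fatou property; taking $E=L_p$ and $E=L_{p,\infty}$ and using $\|x\otimes g\|_{L_p}=\|x\|_p\|g\|_p$, respectively $\|x\otimes g\|_{L_{p,\infty}}\leq\|x\|_p\|g\|_{p,\infty}$, gives (\ref{L_p cwikel}) and (\ref{weak L_p cwikel}). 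Note that this last tensor-norm step is the only extra verification needed beyond the $L_2$ estimate, so the ``fits into that framework'' check you defer is short. Your self-contained complex bilinear interpolation argument is a legitimate alternative for part (\ref{L_p cwikel}), using the trivial $(L_\infty,\ell_\infty)$ endpoint.

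The genuine gap is your primary route to part (\ref{weak L_p cwikel}). Bilinear \emph{real} interpolation between the endpoints $(L_2,\ell_2)\to\mcL_2$ and $(L_\infty,\ell_\infty)\to\Bc$ cannot produce $L_p\times\ell_{p,\infty}\to\mcL_{p,\infty}$: the Lions--Peetre-type bilinear theorems carry a constraint on the secondary indices (roughly $1/q_1+1/q_2\geq 1+1/r$), so with the symbol at secondary index $\infty$ and target secondary index $\infty$ one is forced to take the function side in $L_{p,1}$, not $L_p$ (equivalently, keeping $x\in L_p$ one only reaches $g\in\ell_{p,1}$ or the strong conclusion $\mcL_p$ of part (\ref{L_p cwikel})). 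This is not a bookkeeping nuisance but the heart of the matter: the asymmetric weak-type Cwikel estimate is precisely the statement that does \emph{not} follow from abstract two-sided interpolation, which is why Cwikel's original theorem required a new construction and why \cite{LeSZ2017} works with the single rearrangement of the tensor $x\otimes g$ (plus the Fatou property) rather than with two separate interpolation scales. For the same reason, the restriction $p>2$ is not an artefact of real interpolation losing an endpoint --- the weak estimate is genuinely false at $p=2$. Finally, the suggested repair via the H\"older inequality \eqref{weak-type-holder} and factorisation of $M_xT_g$ does not close this gap either: writing $x=u|x|^{1/2}|x|^{1/2}$ and splitting $g$ produces factors such as $T_{g_2}$ or $M_{u|x|^{1/2}}$ standing alone, which are not bounded under the stated hypotheses, so one cannot bootstrap the weak-type bound from the $p=2$ case this way (the paper's H\"older factorisation in Lemma \ref{commutator-Sob} is downstream of Corollary \ref{Cwikel-type-cor}, i.e.\ it already uses part (\ref{weak L_p cwikel})). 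With the LeSZ2017 route taken as primary rather than as an aside, your argument coincides with the paper's and is complete.
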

    \begin{proof}
        We in fact prove the following far stronger estimate, stated in the language of symmetric function spaces \cite[Chapter 2]{LSZ2012}: For any symmetric function space $E$ whose norm satisfies the Fatou property\footnote{meaning that if $A_n$ is a sequence of positive operators with $A_n\uparrow A$ in the weak operator topology, then $\|A\|_{E} \leq \sup_{n} \|A_n\|_{E}$} and is an interpolation space of $L_2$ and $L_\infty$, if $x\otimes g \in E(L_{\infty}(\qt)\otimes \ell_{\infty}(\ent^d))$ then $M_xT_g$ is in $E(\Bc(L_2(\qt)))$, with norm bound,
        \begin{equation}\label{interpolated cwikel estimate}
            \|M_xT_g\|_{E(\Bc(L_2(\qt)))} \leq C_E \|x\otimes g\|_{E(L_\infty(\qt)\otimes \ell_\infty(\ent^d))}.
        \end{equation}
        
        After proving \eqref{interpolated cwikel estimate}, we explain how it entails the results in the statement of the theorem.
        
        In fact \eqref{interpolated cwikel estimate} can be obtained by a direct application of \cite[Corollary 3.5]{LeSZ2017}. Here we have two von Neumann algebras $L_{\infty}(\qt)$
        and $\ell_{\infty}(\ent^d)$ represented on the same Hilbert space $L_2(\qt)$ by left multiplication and Fourier multiplication respectively. In this setting, we can use \cite[Corollary 3.5]{LeSZ2017} which states that if we have an estimate of the form:
        \begin{equation}\label{L_2 cwikel}
            \|M_xT_g\|_{\mcL_{2}(\Bc(L_{2}(\qt)))} \leq \|x\|_{L_2(\qt)}\|g\|_{\ell_2(\ent^d)}
        \end{equation}
        then \eqref{interpolated cwikel estimate} follows.
        
        To prove \eqref{L_2 cwikel}, we can express the Hilbert-Schmidt norm in terms of an expansion with respect to the basis $\{U^m\}_{ m \in \ent^d}$ of $L_2(\qt)$,
        \be\begin{split}
        \| M_x T_g \|_{\mcL_2}^2 & = \sum_{m, n \in \ent^d}  \Big|    \tau \Big(x  \big(T_gU^m\big) (U^n)^* \Big) \Big|^2\\
        & = \sum_{m, n \in \ent^d}  \Big|    \tau \Big(x   g(m )U^m  (U^n)^* \Big) \Big|^2 = \sum_{m, n \in \ent^d} |g(m)|^2 \Big|    \tau \Big(x  U^m  (U^n)^* \Big) \Big|^2 \\
        &= \sum_{m \in \ent^d} |g(m)|^2 \sum_{n \in \ent^d} \Big|    \tau \Big(x  U^m  (U^n)^* \Big) \Big|^2 .
        \end{split}\ee
        By the Plancherel formula \eqref{Plancherel-qt}, we have
        $$\sum_{n \in \ent^d}  |    \tau  (x  U^m  (U^n)^*  )  |^2 = \|x U^m \|_2^2= \|x\|_2^2.$$
        Thus,
        $$
        \| M_x T_g \|_{\mcL_2}^2 =  \|x\|_2^2 \|g\|_2^2.$$
        Hence, \eqref{L_2 cwikel} holds and thus by \cite[Corollory 3.5]{LeSZ2017} it follows that \eqref{interpolated cwikel estimate} holds.
        
        Now, we take $E = L_p$ in \eqref{interpolated cwikel estimate} for $p \in (2,\infty)$. This is indeed an interpolation space between $L_2$ and $L_\infty$ whose norm satisfies the Fatou property. 
         Then combining \eqref{interpolated cwikel estimate} with the identity
        \begin{equation*}
            \|x\otimes g\|_{L_p(L_\infty(\qt)\otimes \ell_\infty(\ent^d))} = \|x\|_p \|g\|_p
        \end{equation*}
        yields \eqref{L_p cwikel}.
        
        Finally, to obtain \eqref{weak L_p cwikel}, we take $E=L_{p,\infty}$ in \eqref{interpolated cwikel estimate} and use the estimate:
        \begin{equation*}
            \|x\otimes g\|_{L_{p,\infty}(L_\infty(\qt)\otimes \ell_\infty(\ent^d))}\leq \|x\|_p \|g\|_{p,\infty}.
        \end{equation*}
        This completes the proof of \eqref{weak L_p cwikel}.
    \end{proof}

        Consider the function on $\ent^d$, $n\mapsto (1+|n|^2)^{-\frac{d}{2}}$. When $|n| > 1 $, we have $(1+|n| ^2)^{-\frac{d}{2}} \leq |n|^{-d}$. For $|n| \leq 1$, $(1+|n| ^2)^{-\frac{d}{2}}$ is bounded from above by $1$. Hence $n \mapsto (1+|n| ^2)^{-\frac{d}{2}} \in \ell_{1,\infty} (\ent^d)$, and so $n\mapsto (1+|n| ^2)^{-\frac{\bt}{2}}\in \ell_{\frac{d}{\bt} , \8} (\ent^d)$. Then it follows immediately from the above theorem that

        \begin{cor}\label{Cwikel-type-cor}
        Consider the linear operator $(1\ot x) (1+\D^2)^{-\frac \beta  2} $ on $\com^N \ot L_2(\qt)$. If $x\in L_{\frac d \beta}(\qt)$ with $\frac d \beta >2$, then $(1\ot M_x) (1+\D^2)^{-\frac \beta  2} \in \mcL_{\frac d  \beta, \8} ,$ and
        $$\|(1\ot M_x) (1+\D^2)^{-\frac \beta 2}\|_{\mcL_{\frac d \beta, \8}} \leq  C\|x\|_{\frac d \beta},$$
        where the constant $C>0$ depends only on $d$ and $\beta$.
        \end{cor}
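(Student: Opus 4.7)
The plan is to reduce the corollary directly to Theorem~\ref{Cwikel-type}(\ref{weak L_p cwikel}) by identifying $(1+\D^2)^{-\beta/2}$ as a scalar Fourier multiplier tensored against the identity on $\Cplx^N$.

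First I would compute $\D^2$ using the Clifford relations $\gamma_j\gamma_k+\gamma_k\gamma_j = 2\delta_{j,k}$. Since
\[
    \D^2 = \sum_{j,k=1}^d \gamma_j\gamma_k \otimes D_jD_k = 1_N \otimes \sum_{j=1}^d D_j^2 = 1_N \otimes (-\Delta),
\]
we get $(1+\D^2)^{-\beta/2} = 1_N \otimes J^{-\beta}$, where $J^{-\beta}$ is the Bessel potential discussed in Subsection~\ref{calculus definition subsubsection}. Consequently,
\[
    (1\otimes M_x)(1+\D^2)^{-\beta/2} = 1_N \otimes (M_x\, T_g),
\]
where $T_g = J^{-\beta}$ is the Fourier multiplier with symbol $g(n) = (1+|2\pi n|^2)^{-\beta/2}$ on $\ent^d$.

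Next I would verify $g \in \ell_{d/\beta,\infty}(\ent^d)$. This is already essentially done in the paragraph preceding the corollary: the symbol $n\mapsto (1+|n|^2)^{-d/2}$ lies in $\ell_{1,\infty}(\ent^d)$ (being bounded by $1$ for $|n|\leq 1$ and by $|n|^{-d}$ otherwise), so raising to the power $\beta/d$ places $(1+|2\pi n|^2)^{-\beta/2}$ in $\ell_{d/\beta,\infty}(\ent^d)$, up to a constant depending only on $d$ and $\beta$.

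Setting $p = d/\beta \in (2,\infty)$, Theorem~\ref{Cwikel-type}(\ref{weak L_p cwikel}) applies and gives
\[
    \|M_x\, T_g\|_{\mcL_{d/\beta,\infty}} \leq C_{d,\beta}\,\|x\|_{d/\beta}\,\|g\|_{\ell_{d/\beta,\infty}} \leq C\,\|x\|_{d/\beta}.
\]
Finally, tensoring with $1_N$ merely repeats each singular value $N$ times, so the weak Schatten norm is affected only by a factor depending on $d$ (since $N = 2^{\lfloor d/2\rfloor}$), yielding the claimed bound. There is no real obstacle here; the only point worth a sentence is the identification $\D^2 = 1_N\otimes (-\Delta)$, which turns the matrix-valued operator into a scalar Fourier multiplier to which Theorem~\ref{Cwikel-type} directly applies.
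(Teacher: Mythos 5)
Your proof is correct and follows essentially the same route as the paper: the paper likewise observes that $n\mapsto(1+|n|^2)^{-\beta/2}\in\ell_{\frac{d}{\beta},\infty}(\ent^d)$ and then invokes Theorem \ref{Cwikel-type}\eqref{weak L_p cwikel}. Your additional details (the identification $\D^2=1_N\otimes(-\Delta)$ and the harmless repetition of singular values caused by tensoring with $1_N$) are exactly the steps the paper leaves implicit, and they are handled correctly.
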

        
    At this point it is worth noting that since the function $n\mapsto (1+|n|^2)^{-\frac{\al}{2}}$ is in $\ell_{\frac{d}{\al},\8}(\ent)$, for all $\al > 0$ we have:
    \begin{equation}\label{bessel potential ideal}
        {\hl J^{-\alpha} \in \mcL_{\frac{d}{\al},\infty}.}
    \end{equation}

\section{Proof of Theorem \ref{sufficiency}}\label{sec-suff}

This section is devoted to the proof of Theorem \ref{sufficiency}, that is, that the condition $x \in {\hl \dot{H}^{1}_d(\qt)}$ is sufficient
for $\qd x \in \mcL_{d,\infty}$, and with an explicit norm bound: {\hl $$\|\qd x\|_{d,\infty} \leq C_d\|x\|_{\dot{H}^{1}_d(\qt)}.$$
Note that due to the Poincar\'e inequality \eqref{Poincare}, $\dot{H}^1_d(\qt)$ is a subset of $L_d(\qt)$, and thus in particular the operator $\qd x$ is well-defined.}

The following lemma is a corollary of Theorem \ref{Cwikel-type}\eqref{L_p cwikel}.

\begin{lem}\label{Cwikel-xp}
Suppose that $p>\frac d 2$ and $x\in L_p(\qt)$. If $p\geq 2$, then there exists a constant $C_{p,d}>0$ such that 
$$ \big\|\big[\sgn(\D)  - \frac{\D}{\sqrt{1+\D^2}}, 1\ot M_x  \big]\big\|_{\mcL_p}  \leq C_{p,d}  \|x\|_p ,        $$
{\hl meaning that, if $x \in L_p(\qt)$ then the above commutator (initially defined on $C^\infty(\qt)\otimes \com^N$) admits an extension to a bounded operator which is in the ideal $\mcL_p$ with the above norm bound.}
\end{lem}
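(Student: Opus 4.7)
The plan is to reduce the commutator to expressions of the form $M_x T_g$ and $T_g M_x$ to which the Cwikel-type estimate Theorem \ref{Cwikel-type}(\ref{L_p cwikel}) applies directly, exploiting the fact that the function $t \mapsto \sgn(t) - t/\sqrt{1+t^2}$ decays like $|t|^{-2}$ at infinity.

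First I would observe that the difference $\sgn(\D) - \D/\sqrt{1+\D^2}$ can be rewritten using functional calculus. Since $\D^2 = 1 \otimes (-\Delta)$, we have $|\D|^{-1} = 1 \otimes (-\Delta)^{-1/2}$ and $(1+\D^2)^{-1/2} = 1 \otimes J^{-1}$ (defined to vanish on the constants, i.e.\ on the kernel of $\D$). Factoring out $\D$, one obtains
$$\sgn(\D) - \frac{\D}{\sqrt{1+\D^2}} = \D\bigl(1\otimes h(\sqrt{-\Delta})\bigr), \qquad h(t) := \tfrac{1}{t} - \tfrac{1}{\sqrt{1+t^2}}.$$
Using $\D = \sum_j \gamma_j \otimes D_j$, this becomes $\sum_{j=1}^d \gamma_j \otimes T_{g_j}$, where $T_{g_j}$ is the Fourier multiplier on $L_2(\qt)$ with symbol $g_j(n) := 2\pi n_j\,h(2\pi|n|)$ for $n\neq 0$ (and $g_j(0)=0$).

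Second, I would verify the decay of the symbol. A Taylor expansion of $h$ at infinity gives $h(t) = \tfrac{1}{2}t^{-3} + O(t^{-5})$, hence $|g_j(n)| \leq C|n|^{-2}$ for $|n|\geq 1$. Consequently
$$\|g_j\|_{\ell_p(\ent^d)}^p \leq C^p \sum_{|n|\geq 1} |n|^{-2p} < \infty$$
precisely when $2p > d$, i.e.\ under the standing hypothesis $p>d/2$.

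Third, I would expand the commutator termwise:
$$\bigl[\sgn(\D)-\tfrac{\D}{\sqrt{1+\D^2}},\,1\otimes M_x\bigr] = \sum_{j=1}^d \gamma_j \otimes \bigl(T_{g_j}M_x - M_x T_{g_j}\bigr).$$
Theorem \ref{Cwikel-type}(\ref{L_p cwikel}) applied with $p\geq 2$ gives $\|M_x T_{g_j}\|_{\mcL_p} \leq C_p\|x\|_p\|g_j\|_p$, and the reverse-order estimate $\|T_{g_j}M_x\|_{\mcL_p} \leq C_p\|x\|_p\|g_j\|_p$ follows by taking adjoints (using that $g_j$ is real, so $T_{g_j}$ is self-adjoint on $L_2(\qt)$, and $(T_{g_j}M_x)^* = M_{x^*}T_{g_j}$). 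Summing over $j$ and absorbing the fixed constants $\|\gamma_j\|_{\mathrm{op}}$ and $\|g_j\|_p$ into $C_{p,d}$ yields the desired bound.

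The only subtlety I would want to double-check is the handling of the kernel of $\D$ (the constants): on this finite-dimensional subspace both $\sgn(\D)$ and $\D/\sqrt{1+\D^2}$ are zero by our convention, so the decomposition above is valid everywhere. No step should be substantive beyond this; all the analytic work is already contained in Theorem \ref{Cwikel-type}, and the role of the lemma is simply to organise the functional-calculus identity so that Cwikel can be invoked. The sharpness of the hypothesis $p>d/2$ is seen directly at the level of the summability of the symbol $g_j$.
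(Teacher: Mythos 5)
Your proposal is correct and follows essentially the same route as the paper: you write $\sgn(\D)-\D(1+\D^2)^{-1/2}=\sum_{j=1}^d\gamma_j\otimes T_{g_j}$ with symbols $g_j(n)=\frac{n_j}{|n|}-\frac{n_j}{((2\pi)^{-2}+|n|^2)^{1/2}}$ decaying like $|n|^{-2}$, hence in $\ell_p(\ent^d)$ exactly when $p>d/2$, and then apply Theorem \ref{Cwikel-type}(i) to $M_xT_{g_j}$ and, via adjoints, to $T_{g_j}M_x$. This is precisely the paper's argument, including the implicit convention on the kernel of $\D$.
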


\begin{proof}
Let $1\leq j \leq d$, and for $n \in \ent^d$ define
$$h_j(n) := \frac{n_j}{|n|} -\frac{n_j}{((2\pi)^{-2}+|n|^2)^{\frac{1}{2}}}.$$
Thus,
\begin{equation*}
    T_{h_j} = h_j(-\frac{\ri}{2\pi}\nabla) = \frac{-\ri\partial_j}{\sqrt{-\Delta}}-\frac{-\ri\partial_j}{(1-\Delta)^{\frac{1}{2}}}
\end{equation*}
and so,
\begin{align*}
    \sgn(\D) - \frac{\D}{\sqrt{1+\D^2}} &= \sum_{j=1}^d \gamma_j\otimes \left( \frac{-\ri\partial_j}{\sqrt{-\Delta}}-\frac{-\ri\partial_j}{(1-\Delta)^{\frac{1}{2}}}\right)\\
                                        &= \sum_{j=1}^d \gamma_j\otimes h_j(-\frac{\ri}{2\pi}\nabla)\\
                                        &= \sum_{j=1}^d \gamma_j\otimes T_{h_j}.
\end{align*}
One can easily check that $h_j \in \ell_p(\ent^d)$ as $p> \frac d 2$. Expanding out the commutator,
\be 
\begin{split}
\big[\sgn(\D)  - \frac{\D}{\sqrt{1+\D^2}}, 1\ot M_x  \big] &= \big[\sum_{j=1}^d \g _j  \ot T_{h_j}, 1\ot M_x  \big]\\
&=\sum_{j=1}^d  \g_j \ot  [T_{h_j}, M_x].
\end{split}
\ee
Hence, 
\be 
\begin{split}
\|\big[\sgn(\D)  - \frac{\D}{\sqrt{1+\D^2}}, 1\ot M_x  \big]\|_{\mcL_p} &\leq d \max_{1\leq j \leq d} \|\big[ T_{h_j},  M_x  \big]\|_{\mcL_p}\\
&\leq d \max_{1\leq j \leq d} \big(\| T_{h_j} M_x   \|_{\mcL_p}+\| M_x   T_{h_j}\|_{\mcL_p}\big)\\
&= d \max_{1\leq j \leq d} \big(\| M_{x ^*}  T_{h_j}\|_{\mcL_p}+   \|M_x  T_{h_j}\|_{\mcL_p}\big).
\end{split}
\ee
The desired conclusion follows then from Theorem \ref{Cwikel-type}.(i).
\end{proof}

The proof of the next lemma relies on the technique of double operator integrals (see \cite{PSW2002} and \cite{PS2009} and references therein). 
Let $H$ be a (complex) separable Hilbert space. Let $D_0$ and $D_1$ be self-adjoint (potentially unbounded) operators on $H$, and $E^0$ and $E^1$ be the associated spectral measures. For all $x, y \in \mcL_2(H)$, the measure $(\lambda, \mu)  \mapsto \rTr(x\, dE^0(\lambda) \, y \, dE^1(\mu)  )$ is a countably additive complex valued measure on $\real^2$. We say that $\phi \in L_\8(\real^2)$ is $E^0 \otimes E^1$ integrable if there exists an operator $T_\phi ^{D_0, D_1} \in \mathcal{B}  (\mcL_2(H))$ such that for all $x, y \in \mcL_2(H)$,
$$\rTr (x\,T_\phi ^{D_0, D_1} y  ) =\int _{\real^2}    \phi(\lambda, \mu )   \rTr(x\, dE^0(\lambda) \, y \, dE^1(\mu)  ). $$
The operator $T_\phi ^{D_0, D_1} $ is called the transformer. For $A\in \mcL_2(H) $, we define
\beq\label{doi-def}
T_\phi ^{D_0, D_1}(A)=\int _{\real^2}    \phi(\lambda, \mu )   dE^0(\lambda) \, A \, dE^1(\mu)  . \eeq
This is called a double operator integral.

\begin{lem}\label{commutator-Sob}
Let $x\in \dot H_d^1(\qt)$. Then 
$$ \big\|\big[\frac{\D}{\sqrt{1+\D^2}}, 1\ot M_x  \big]\big\|_{\mcL_{d,\8} }  \leq B_{d}  \|x\|_{\dot H_d^1}$$
where the constant $B_d>0$ depends only on $d$.
{\hl As with Lemma \ref{Cwikel-xp}, the above commutator is interpreted as being initially defined on $C^\infty(\qt)\otimes \com^N$.}
\end{lem}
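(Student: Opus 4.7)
The plan is to use the Leibniz rule to split the commutator into an ``easy'' part and a ``hard'' part, and to handle the hard part via the DOI formalism defined in \eqref{doi-def}. With $f(t) = t/\sqrt{1+t^2}$, so that $f(\D) = \D(1+\D^2)^{-1/2}$, Leibniz gives
\beqn
\Bigl[\tfrac{\D}{\sqrt{1+\D^2}}, 1\ot M_x\Bigr] = [\D, 1\ot M_x]\,(1+\D^2)^{-1/2} + \D\,[(1+\D^2)^{-1/2}, 1\ot M_x].
\eeqn
Since $1+\D^2 = 1\ot J^2$, the first summand equals $-\ri\sum_j \g_j\ot (M_{\pd_j x}\,J^{-1})$, and by Corollary \ref{Cwikel-type-cor} (with $\bt = 1$) its $\mcL_{d,\8}$-norm is controlled by $C_d\sum_j \|\pd_j x\|_d$, which is comparable to $\|x\|_{\dot H^1_d}$ by \eqref{Sob-equi-norm}.

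For the hard term, set $g(t) = (1+t^2)^{-1/2}$ and use the DOI identity
\beqn
[g(\D), 1\ot M_x] = T^{\D,\D}_{g^{[1]}}\bigl([\D, 1\ot M_x]\bigr),
\eeqn
with divided difference $g^{[1]}(\lambda,\mu) = (g(\lambda)-g(\mu))/(\lambda-\mu)$. Left multiplication by $\D$ absorbs a factor $\lambda$ into the kernel, yielding the transformer with symbol
\beqn
k(\lambda,\mu) := \lambda\, g^{[1]}(\lambda,\mu) = -\frac{\lambda(\lambda+\mu)}{\sqrt{1+\lambda^2}\,\sqrt{1+\mu^2}\,\bigl(\sqrt{1+\lambda^2}+\sqrt{1+\mu^2}\bigr)}.
\eeqn
Exploiting the identity $(a+b)^{-1} = \int_0^\8 e^{-s(a+b)}\,ds$ with $a = \sqrt{1+\lambda^2}$ and $b = \sqrt{1+\mu^2}$ factorizes $k$ as an integral of elementary tensors $a_s(\lambda)\,b_s(\mu)$, so that $T^{\D,\D}_k([\D, 1\ot M_x])$ becomes a sum of integrals of the shape $\int_0^\8 \phi_s(\D)\,(1\ot M_{\pd_j x})\,\psi_s(\D)\,ds$, each of which is estimated in $\mcL_{d,\8}$ by the weak-type H\"older inequality \eqref{weak-type-holder} combined with Theorem \ref{Cwikel-type}(ii) applied separately to $\phi_s(\D)(1\ot M_{\pd_j x})$ and $\psi_s(\D)$.

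The principal obstacle is arranging the H\"older split in Schatten exponents so that both factors land in weak-$\mcL_p$ spaces with $p > 2$, as required by Theorem \ref{Cwikel-type}(ii), uniformly in $s$ and with enough exponential decay in $s$ to make the integral converge in $\mcL_{d,\8}$. A secondary technical point is extending the DOI formula \eqref{doi-def} beyond Hilbert-Schmidt inputs (since $[\D, 1\ot M_x]$ is unbounded), which is handled by the standard Birman-Solomyak apparatus. The endpoint case $d = 2$ is particularly delicate, as Corollary \ref{Cwikel-type-cor} at $\bt = 1$ is just out of reach; there one substitutes the strong $\mcL_p$-Cwikel bound of Theorem \ref{Cwikel-type}(i) (which does cover $p = 2$) for the Fourier-multiplier factor, at the cost of a mild logarithmic loss that is absorbed by the $s$-decay of the kernel. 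Once each integrand is bounded uniformly and integrably in $s$, integration gives the $\mcL_{d,\8}$-bound for the hard term, and combining it with the easy-term estimate completes the proof.
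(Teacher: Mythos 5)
Your high-level architecture (split off an ``easy'' one-sided term and treat the rest by double operator integrals) is reasonable, but both halves have genuine gaps. For the hard term, the Laplace-transform factorization $(a+b)^{-1}=\int_0^\infty e^{-s(a+b)}\,ds$ applied to $k(\lambda,\mu)=\lambda g^{[1]}(\lambda,\mu)$ does not produce an estimate that is integrable in $s$: the resulting elementary-tensor pieces behave like $\phi_s(\lambda)\approx|\lambda|e^{-s|\lambda|}$ (or $e^{-s|\lambda|}$) and $\psi_s(\mu)\approx(1+|\mu|)^{-1}e^{-s|\mu|}$ (or $e^{-s|\mu|}$), and even with the optimal symmetric split (polar decomposition of $\pd_j x$ and Cwikel at exponent $2d$ on each side) the product of the relevant $\ell_{2d,\infty}$-norms is of order $s^{-1}$ near $s=0$, so $\int_0^1(\cdot)\,ds$ diverges logarithmically. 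You flag ``arranging the H\"older split \dots uniformly in $s$'' as the principal obstacle, but you do not resolve it, and with this factorization it cannot be resolved; this is exactly why the paper instead uses the specific factorization $g^{[1]}=\psi_1\psi_2\psi_3$ of \cite[Lemma 9]{LMSZ2017} (with $g(t)=t(1+t^2)^{-1/2}$, no Leibniz split at all), where the nontrivial middle factor $\psi_2$ is handled by a Schur-multiplier boundedness result on $\mcL_1$ and $\mcL_\infty$ \cite[Lemma 8]{LMSZ2017} and interpolation, and the factor $\psi_3$ produces the symmetric operator $(1+\D^2)^{-1/4}[\D,1\ot M_x](1+\D^2)^{-1/4}$, which is then estimated via polar decomposition $\pd_j x=U_j|\pd_j x|^{1/2}|\pd_j x|^{1/2}$ and two applications of Corollary \ref{Cwikel-type-cor} at exponent $2d>2$.

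The second gap is your ``easy'' term at $d=2$. There you need $\|M_{\pd_j x}(1-\Delta)^{-1/2}\|_{\mcL_{2,\infty}}\lesssim\|\pd_j x\|_2$, i.e.\ a critical weak-$\mcL_2$ Cwikel estimate, which Theorem \ref{Cwikel-type}\eqref{weak L_p cwikel} deliberately excludes ($p>2$ is required) and which is false in general at the endpoint. Your proposed substitute, Theorem \ref{Cwikel-type}\eqref{L_p cwikel} with $p=2$, does not apply because the symbol $(1+|n|^2)^{-1/2}$ is not in $\ell_2(\ent^2)$ (the sum $\sum_{n\in\ent^2}(1+|n|^2)^{-1}$ diverges), and there is no $s$-integral in this term whose decay could ``absorb'' a logarithmic loss. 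The paper's symmetric placement of $(1+\D^2)^{-1/4}$ on both sides of $M_{\pd_j x}$ is precisely what avoids ever invoking a $p\le 2$ Cwikel bound, and is therefore not an optional refinement but the essential device; without it (or some equivalent symmetrization) your decomposition fails in dimension $d=2$. A minor further point: the passage from $x\in C^\infty(\qt)$ (where $[\D,1\ot M_x]$ is bounded and the Birman--Solomyak representation is licit) to general $x\in\dot H^1_d(\qt)$ requires the explicit density/limit argument given in the paper, which your sketch omits.
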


\begin{proof}
Set $g(t)= t (1+t^2)^{-\frac{1}{2}}$ for $t\in \real$. {\hl Suppose initially that $x \in C^{\infty}(\qt)$. Under this assumption, $[D,1\otimes M_x]$ extends to a bounded operator, and thus we can apply \cite[Theorem~4.1]{BS1989} (see also Proposition 2.6 and Theorem 3.1 in \cite{PS2008}) to get}
\beq\label{repr-commutator}
[g(\D), 1\ot M_x] =  T_{g^{[1]}}^{\D,\D} ([\D, 1\ot M_x]),
\eeq
where $g^{[1]}(\lambda,\mu )  := \frac{g(\lambda)-g(\mu )}{\lambda-\mu }$ for different $\lambda, \mu \in \real$. By  \cite[Lemma 9]{LMSZ2017}, we have $g^{[1]} =\p_1\p_2 \p_3$, with 
$$\psi_1 = 1+ \frac{1-\lambda \mu }{(1+ \lambda^2)^{\frac 1 2 } (1+\mu ^2) ^{\frac 1 2 } },\;\; \psi_2 = \frac{(1+\lambda^2)^{\frac 1 4} (1+\mu ^2)^{\frac 1 4} }{(1+ \lambda^2)^{\frac 1 2 } + (1+\mu ^2) ^{\frac 1 2 } },\;\;\psi_3 =  \frac{1   }{(1+ \lambda^2)^{\frac 1 4 } (1+\mu ^2) ^{\frac 1 4 } }.$$ 
It follows that 
\beq\label{TDDg}
T_{g^{[1]}}^{\D,\D}  = T_{\p_1}^{\D,\D}  T_{\p_2}^{\D,\D}  T_{\p_3}^{\D,\D} .
\eeq
By \cite[Lemma 8]{LMSZ2017}, we see that the transformer $T_{\p_2}^{\D,\D} $ is bounded on both $\mcL_1$ and $\mcL_\8$. 

{\hl For $k=1,3$ the function $\psi_k$ can be written as a linear combination of products of bounded functions of $\lambda$ and of $\mu$, and from this it follows that $T_{\p_k}^{\D,\D}$ is a bounded linear map on $\mcL_{1}$
and $\mcL_\8$. For further details, see e.g. \cite[Corollary 2]{PS2009} and \cite[Corollary 2.4]{RX2011}.}

Then by real interpolation of $(\mcL_1, \mcL_\8)$ (see \cite{DDP1992} or \cite{Xu2007}), the transformers $T_{\p_k}^{\D,\D} $ with $k=1,2,3$ are bounded linear transformations from $\mcL_{d,\8}$ to $\mcL_{d,\8}$.

We now exploit the identity in \eqref{repr-commutator} and the product of terms in \eqref{TDDg}, noticing that 
\be\begin{split}
\|[g(\D), 1\ot M_x]\|_{\mcL_{d,\8}}&\leq \| T_{\p_1}^{\D,\D} \|_{\mcL_{d,\8}\ra \mcL_{d,\8}}  \| T_{\p_2}^{\D,\D} \|_{\mcL_{d,\8}\ra \mcL_{d,\8}}\\
&\;\;\;\;\;\;\;\;    \times  \| T_{\p_3}^{\D,\D} ([\D, 1\ot M_x])\|_{ \mcL_{d,\8}}  \\
&\leq C_d \| T_{\p_3}^{\D,\D} ([\D, 1\ot M_x])\|_{ \mcL_{d,\8}},
\end{split}\ee
where the constant $C_d>0$ does not depend on $x$. Since $\p_3(\lambda,\mu)=(1+\lambda^2)^{-1/4} (1+\mu^2)^{-1/4}$ is a product a function of $\lambda$ and a function of $\mu$, by \eqref{doi-def}, we have
$$T_{\p_3}^{\D,\D} ([\D, 1\ot M_x]) = (1+\D^2)^{-1/4} [\D, 1\ot M_x](1+\D^2)^{-1/4}. $$
Hence 
$$\|[g(\D), 1\ot M_x]\|_{\mcL_{d,\8}} \leq C_d \| (1+\D^2)^{-1/4} [\D, 1\ot M_x](1+\D^2)^{-1/4}\|_{\mcL_{d,\8}}.$$
Expanding out $\D$ and using the quasi-triangle inequality for $\mcL_{d,\8}$, we have
\be\begin{split}
&\| (1+\D^2)^{-1/4} [\D, 1\ot M_x](1+\D^2)^{-1/4}\|_{\mcL_{d,\8}}   \\
&\leq K_d \sum_{j=1}^d \| (1+\D^2)^{-1/4} [\g_j\ot D_j, 1\ot M_x](1+\D^2)^{-1/4}\|_{\mcL_{d,\8}},
\end{split}\ee
where $K_d>0$ depends only on $d$. But $[\g_j\ot D_j, 1\ot M_x] =-\ri  \g_j \ot M_{\pd_j x} $, thus we obtain
$$\| (1+\D^2)^{-1/4} [\g_j\ot D_j, 1\ot M_x](1+\D^2)^{-1/4}\|_{\mcL_{d,\8}} = \| (1-\Delta)^{-1/4} M_{\pd_j x}(1- \Delta)^{-1/4}\|_{\mcL_{d,\8}}.$$
Note that the first norm $\|\cdot\|_{\mcL_{d,\8}}$ is the norm of $\mcL_{d,\8}(\com^N\otimes L_2(\qt))$, and the second one is the norm of $\mcL_{d,\8}(L_2(\qt))$.

We are reduced to estimating the quantity $\| (1-\Delta)^{-1/4} M_{\pd_j x}(1- \Delta)^{-1/4}\|_{\mcL_{d,\8}}$. By polar decomposition, for every $j$, there is a partial isometry $U_j$ such that 
$$\pd_j x  =  U_j |\pd_j x | =  U_j |\pd_j x |^{\frac 1 2 } |\pd_j x |^{\frac 1 2 }.$$
Taking $\beta=\frac 1 2 $, and recalling that $x$ is such that $\|U_j |\pd_j x|^{\frac 1 2 } \|_{2d}\leq \|\, |\pd_j x|^{\frac 1 2 } \|_{2d} = \|\pd_j x \|_d^{\frac 1 2 }<\8$, we apply Corollary \ref{Cwikel-type-cor} to get
(for some constant $Q_d$)
$$\| M_{|\pd_j x|^{\frac 1 2}}(1- \Delta)^{-1/4}\|_{\mcL_{2d,\8}}= \| (1- \Delta)^{-1/4} M_{|\pd_j x|^{\frac 1 2}}\|_{\mcL_{2d,\8}}\leq Q_d \|\, |\pd_j x|^{\frac 1 2 } \|_{2d}$$
and 
$$\| (1- \Delta)^{-1/4} M_{U_j |\pd_j x|^{\frac 1 2}}\|_{\mcL_{2d,\8}}\leq Q_d \|U_j |\pd_j x|^{\frac 1 2 } \|_{2d}\leq  Q_d \|\, |\pd_j x|^{\frac 1 2 } \|_{2d}.$$
Thus, by the H\"{o}lder inequality \eqref{weak-type-holder}, 
$$\| (1-\Delta)^{-1/4} M_{\pd_j x}(1- \Delta)^{-1/4}\|_{\mcL_{d,\8}}\leq c_{\frac{d}{2},\frac{d}{2}}Q_d^2\|\, |\pd_j x|^{\frac 1 2 } \|_{2d}^2=c_{\frac{d}{2},\frac{d}{2}}Q_d^2\|\pd_j x \|_d.$$
Taking $B_d = c_{\frac{d}{2},\frac{d}{2}}d Q_d^2 C_d K_d$, we conclude that 
\begin{equation}\label{g(D)_sobolev_bound}
    \|[g(\D), 1\ot M_x]\|_{\mcL_{d,\8}} \leq B_d \sum_{j=1}^d\|\pd_j x \|_d\leq B_d \|x\|_{\dot H_d^1}.
\end{equation}

{\hl 
We now remove the initial assumption that $x \in C^\infty(\qt)$. Suppose that $x \in \dot H_d^1(\qt)$. As $C^\infty(\qt)$ is dense in $H_d^1(\qt)$ \cite[Proposition 2.7]{XXY2018}, we may select a sequence $\{x_n\}_{n=0}^\infty \subset C^\infty(\qt)$ 
such that $\lim_{n\to\infty} \|x_n-x\|_{H_d^1(\qt)} = 0$. From \eqref{g(D)_sobolev_bound}, we have that the sequence $\{[g(D),1\otimes M_{x_n}]\}_{n=0}^\infty$ is Cauchy in the $\mcL_{d,\infty}$ topology. Hence there is a limit $T \in \mcL_{d,\infty}$. 
On the other hand, if $\eta \in C^\infty(\qt)\otimes \com^N$ from the H\"older inequality we have:
\begin{equation*}
    \|(1\otimes M_{x_n})\eta-(1\otimes M_x)\eta\|_{L_2(\qt)\otimes \com^N} \leq \|x_n-x\|_{L_d(\qt)}\|\eta\|_{L_{2d/(d-2)}(\qt)\otimes \com^N}
\end{equation*}
and similarly,
\begin{equation*}
    \|(1\otimes M_{x_n})g(D)\eta-(1\otimes M_x)g(D)\eta\|_{L_2(\qt)\otimes \com^N} \leq \|x_n-x\|_{L_d(\qt)}\|g(D)\eta\|_{L_{2d/(d-2)}(\qt)\otimes \com^N}.
\end{equation*}
Therefore for each fixed $\eta \in C^{\infty}(\qt)\otimes \com^N$ we have:
\begin{equation*}
    [g(D),1\otimes M_{x_n}]\eta \rightarrow [g(D),1\otimes M_x]\eta
\end{equation*}
in the $L_2(\qt)\otimes \com^N$ sense. Thus,
\begin{equation*}
    [g(D),1\otimes M_{x}]\eta = T\eta
\end{equation*}
for all $\eta \in C^{\infty}(\qt)\otimes \com^N$. Therefore $T$ and $[g(D),1\otimes M_x]$ are equal, and we have:
\begin{equation*}
    [g(D),1\otimes M_{x_n}]\to [g(D),1\otimes M_x]
\end{equation*} 
in the $\mcL_{d,\infty}$ topology. Thus \eqref{g(D)_sobolev_bound} holds for all $x \in \dot H^1_d(\qt)$.
}
\end{proof}

Now we are able to complete the proof of Theorem \ref{sufficiency}.

\begin{proof}[Proof of Theorem \ref{sufficiency}]
Let $x\in \dot H_d^1(\qt)$. Combining Lemmas \ref{Cwikel-xp} and \ref{commutator-Sob}, we find that
$$ \big\|\big[\sgn(\D)   , 1\ot M_x  \big]\big\|_{\mcL_{d,\8}}  \leq C_{d,d}  \|x\|_d  + B_d \|x\|_{\dot H_d^1} .       $$
We can remove the dependence on $\|x\|_d$ on the right hand side by the aid of the Poincar\'e inequality \eqref{Poincare}. Since for constant operator $\widehat{x}(0)\in L_\8(\qt)$, it is obvious that $\big[\sgn(\D)   , 1\ot M_{\widehat{x}(0)}  \big]=0$, we have
\be\begin{split}
\big\|\big[\sgn(\D)   , 1\ot M_x  \big]\big\|_{\mcL_{d,\8}}  & = \big\|\big[\sgn(\D)   , 1\ot M_{x-\widehat{x}(0)}  \big]\big\|_{\mcL_{d,\8}}  \\
&\leq C_{d,d}  \|x-\widehat{x}(0) \|_d  + B_d \|x-\widehat{x}(0) \|_{\dot H_d^1}\\
&\leq C_d \|x\|_{ \dot H_d^1}.
\end{split}\ee
The theorem is therefore proved.
\end{proof}

\section{Pseudodifferential operators on quantum tori}\label{sec-pdo}

In this section we give an introduction to some recent developments in pseudodifferential operators on quantum tori. The most important result stated in this section for us is Theorem \ref{Connes-qt}, which 
is a form of Connes' trace formula obtained in \cite{MSZ2018}.

The theory of pseudodifferential operators goes back to Kohn-Nirenberg  \cite{KN1965} and H\"{o}rmander \cite{Hor1965}. It has been extended to the noncommutative setting, especially the quantum torus case, by many authors; see for instance \cite{MPR2005,LMR2010,LJP2016,GJP2017,
Tao2018,XX2018}. Our main references of this part are \cite{Connes1980,Baaj1988} and \cite{CT2011}, while the details can be found in \cite{HLP2018,HLP2018a}. In the following, let us collect some definitions and well known properties of symbol classes and pseudodifferential operators on quantum tori.

Denote by $\Jx$ the function $(1+|\xi|^2) ^{\frac 1 2 }$ on $\real^d$.
For every $m\in \real$, the class $S^m(\real^d;  \cS(\qt))$ consists of all maps $\rho \in C^{\8}(\real^d ; \cS(\qt))$ such that, for all multi-indices $\alpha,\beta \in \nat_0^d$, there exists $C_{\alpha, \beta} >0$ such that 
$$\|D^\alpha D_\xi^\beta  \rho(\xi) \| \leq C_{\al,\bt}  \Jx ^{m-|\bt|_1} ,\quad \forall \xi \in \real^d . $$ 
Endowed with the locally convex topology generated by the semi-norms
$$p_N^{(m)} (\rho)  : =   \sup_{|\al|_1 +|\bt|_1 \leq N   }  \sup _{\xi \in \real ^d}   \Jx ^{-m+|\bt|_1}  \|D^\alpha D_\xi^\beta  \rho(\xi) \| ,\quad N\in \nat _0, $$
$S^m(\real^d;  \cS(\qt))$ is then a Fr\'echet space.

Let $\rho \in S^m(\real^d;  \cS(\qt))$, $m\in \real$, and $\rho_j(\xi) \in S^{m-j}(\real^d;  \cS(\qt))$ for each $j\in \nat$. If for every $N\geq 1$, 
$$\rho(\xi)  - \sum_{j<N}\rho_j(\xi) \in S^{m-N}(\real^d;  \cS(\qt)),$$
we shall write $\rho(\xi) \sim \sum_{j\geq 0}  \rho_j(\xi)$. This is referred to as an asymptotic expansion of the symbol $\rho$.

The homogeneous class of symbols $\dot S^m(\real^d; \cS(\qt))$ consists of maps $\rho \in C^{\8}(\real^d \setminus \{0\} ; \cS(\qt))$ satisfying
$$\rho (\lambda \xi )    =  \lambda^m \rho(\xi),\quad \forall \xi \in \real^d \setminus \{0\},\;\forall \lambda >0.$$
In this case, $\rho$ on $\real^d \setminus \{0\}$ is determined by its restriction to $\mathbb S^{d-1}$, the $d$-dimensional unit sphere.
If {\hl a (not necessarily homogeneous) symbol} $\rho$ admits an asymptotic expansion $\rho \sim \sum_{j\geq 0}  \rho_{m-j}$ with $ \rho_{m-j}\in \dot S^{m-j}(\real^d; \cS(\qt))$ for each $j\geq 0$, then $\rho$ is called a {\it classical }symbol, and the leading term $\rho_m$ is called the principal symbol of $\rho$.

\medskip

Let us turn to the definition of pseudodifferential operators with the above symbols on quantum torus. Let $\al _s$ be a $d$-parameter group of automorphisms given by
 \beq\label{action-R-qt}
 \al_s (U^n)= e^{2\pi \ri s\cdot n}  U^n,\eeq
 which is a periodic version of the action in \eqref{action-T-qt} if we identify $[0,1]^d $ with $\T^d$ by the correspondence $(s_1,s_2,\cdots, s_d) \leftrightarrow (e^{2\pi \ri s_1}, e^{2\pi \ri s_2}, \cdots, e^{2\pi \ri s_d})$. For $\rho \in C^{\8}(\real^d ; \cS(\qt))$, let $P_\rho$ be the pseudodifferential operator sending arbitrary $a \in \cS(\qt)$ to 
\beq\label{pdo-def}
P_\rho(a) := \int_{\real^d}  \int _{\real^d}   e ^{-2\pi \ri s \cdot \xi } \rho (\xi) \al_s(a) ds \, d\xi .\eeq
Note that this integral does not converge absolutely; it is defined as an oscillatory integral. See \cite{CT2011,HLP2018,HLP2018a,Tao2018} for more information. By \cite[Proposition~5.9]{HLP2018}, if $a = \sum_{n\in \ent^d} a_n U^n\in \cS(\qt)$ and $\rho \in S^m(\real^d ; \cS(\qt))$ with {\hl $m\in \real$}, then 
\beq\label{pdo-def-ent}
P_\rho (a)  = \sum_{n\in \ent^d}   \rho(n) a_n U^n ,\eeq
where the sum converges in the operator norm to an element in $\cS(\qt)$.
In other words, the pseudodifferential operator on $\qt$ with symbol $\rho$ is determined by the value of $\rho$ on $\ent^d$, which coincides with the definition given in \cite{LJP2016}.

If $\rho \in S^m(\real^d;\cS(\qt))$ with {\hl $m \in \real$}, then $P_\rho$ is said to be a pseudodifferential operator
of order $m$.

Also note that, by the noncommutativity, if we change the order of $\rho(\xi)$ and $\al_s(a)$ in \eqref{pdo-def} (or $\rho(n) $ and $U^n$ in \eqref{pdo-def-ent}), we get another pseudodifferential operator with the same symbol. In \cite{XX2018}, these two operators are distinguished as column and row operators. But in this paper, we will not need to consider both kinds of operators, and so we focus only on those with the form  \eqref{pdo-def} or \eqref{pdo-def-ent}.

\begin{ex}\label{ex-symbol}
Let us formulate some first examples of symbols defined above.
\begin{enumerate}[{\rm i)}]
\item Let $x\in \cS( \T_\theta^d)$ and consider the constant function $\psi (\xi) \equiv x$, $\xi \in \real^d$. Obviously, $\psi \in S^0(\real^d; \cS(\qt))$. So by the above definition, the multiplier $M_x (y) =xy$ on $ \T_\theta^d $ is an order $0$ pseudodifferential operator. The principal symbol of this operator is $x$ itself.
\item Let $k\in \nat_0^d$. The symbol of the $|k|_1$-order differential operator $D^k= D_1^{k_1}\cdots D_d^{k_d}$ is $\psi (\xi) =  (2\pi \xi_1)^{k_1}(2\pi \xi_2)^{k_2}\cdots (2\pi \xi_d)^{k_d}$. It is easily checked that $\psi \in S^{|k|_1}(\real^d; \cS(\qt))$. Thus $D^k$ is a pseudodifferential operator of order $|k|_1$, and its principal symbol is $ (2\pi \xi)^k$.
\item Let $\alpha \in \real$, and consider the $\alpha$-order Bessel potential $J^\alpha= (1-  \Delta )^{\frac{\al}{2}}$ on the quantum torus, which is a Fourier multiplier with symbol $\psi (\xi) = \lan 2\pi \xi \ran ^\alpha = (1+|2\pi \xi|^2) ^{\frac \al 2 } \in S^\alpha(\real^d; \cS(\qt))$. Thus, $J^\alpha$ is an $\alpha$-order pseudodifferential operator. Moreover, as a scaler-valued function, $\psi (\xi) $ has the asymptotic expansion 
$$\lan 2\pi \xi \ran ^\alpha \sim \sum_{j=0}^{\8}  \begin{pmatrix}
j \\ 
\frac{\al }{2}
\end{pmatrix}  |2\pi \xi|^{\alpha-2j} .$$
Hence, $J^\alpha$ is classical with principal symbol $|2\pi \xi |^\alpha$. See \cite[Proposition~5.14]{HLP2018}.
\end{enumerate}
\end{ex}

The above examples illustrate that both pointwise multipliers $M_x$ and Fourier multipliers $T_g$ from \eqref{def-Fourier-Z} are considered as the special cases of pseudodifferential operators. For general symbol $\rho$, $P_\rho$ may be thought as a limit of linear combinations of operators composed by pointwise multipliers and Fourier multipliers.

\medskip

The composition of two pseudodifferential operators is again a pseudodifferential operator, and there is a method for computing an asymptotic expansion of its symbol.
The following proposition, which is the quantum analogue of the classical result in \cite[p. 237]{Stein1993}, first appears in \cite{CT2011}; a complete proof is given in \cite[Proposition~7.5]{HLP2018a}.

\begin{prop}\label{symbol-composition}
Let $\rho_1$, $\rho_2$ be two symbols in $S ^{n_1}(\real^d; \cS(\qt))$ and $S ^{n_2}(\real^d; \cS(\qt))$ respectively. Then there exists a symbol $\rho_3$ in $S ^{n_1+n_2}(\real^d; \cS(\qt))$ such that 
$$
P_{\rho_3} =P_{\rho_1}   P_{\rho_2} .
$$
Moreover, 
\begin{equation}\label{composition-asym}
\rho_3-\sum_{|\al |_1<N_0} \frac{(2\pi {\rm i})^{-|\al |_1}}{\al  !}D_\xi^\al  \rho_1 D^\al  \rho_2 \in S^{n_1+n_2- N_0}(\real^d; \cS(\qt)), \quad \forall\, N_0\geq 0,
\end{equation}
where the first derivative $D_\xi^\al $ is the derivative of $\rho_1$ with respect to the variable $\xi \in \real^d$, and the second derivative $D^\al $ is the derivation on $\cS(\qt)$ described in Section \ref{calculus definition subsubsection}.
\end{prop}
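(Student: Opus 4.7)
The plan is to adapt the standard composition calculation from commutative pseudodifferential calculus to the quantum-torus setting, where the symbol is algebra-valued and the relevant group action is $\alpha_s$ from \eqref{action-R-qt} in place of translation in an argument variable. First I would compute $P_{\rho_1}(P_{\rho_2}(a))$ directly from the oscillatory integral definition \eqref{pdo-def}; using that $\alpha_t$ is an algebra automorphism, the two-fold composition takes the form
\begin{equation*}
P_{\rho_1}(P_{\rho_2}(a)) = \iiiint e^{-2\pi \ri (t\cdot \eta + s\cdot \xi)}\, \rho_1(\eta)\, \alpha_t(\rho_2(\xi))\, \alpha_{t+s}(a)\, ds\, dt\, d\xi\, d\eta.
\end{equation*}
After the change of variables $u := s + t$ and $\mu := \eta - \xi$, the outer pair $(u,\xi)$ plays the role of the defining oscillatory integral for a pseudodifferential operator with symbol
\begin{equation*}
\rho_3(\xi) = \iint e^{-2\pi \ri\, t\cdot \mu}\, \rho_1(\xi + \mu)\, \alpha_t(\rho_2(\xi))\, dt\, d\mu.
\end{equation*}
A first technical point is that this is an oscillatory integral rather than an absolutely convergent one, and one must verify that $\rho_3 \in S^{n_1 + n_2}(\real^d; \cS(\qt))$. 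Repeated integration by parts in $t$ generates powers of $(1+|\mu|^2)^{-N}$, and in $\mu$ generates powers of $(1+|t|^2)^{-N}$; together with the fact that $\alpha_t$ acts isometrically on each Fr\'echet seminorm of $\cS(\qt)$, this yields the required symbol bounds.

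Next I would derive the asymptotic expansion by Taylor-expanding $\rho_1(\xi + \mu)$ about $\xi$:
\begin{equation*}
\rho_1(\xi + \mu) = \sum_{|\alpha|_1 < N_0} \frac{\mu^\alpha}{\alpha!}\, \pd_\xi^\alpha \rho_1(\xi) + r_{N_0}(\xi, \mu),
\end{equation*}
with $r_{N_0}$ the standard integral remainder. Substituting the polynomial part into the formula for $\rho_3$, each term can be reduced via the identity $\mu^\alpha e^{-2\pi \ri\, t\cdot \mu} = (-2\pi \ri)^{-|\alpha|_1}\, \pd_t^\alpha e^{-2\pi \ri\, t\cdot \mu}$, followed by integration by parts in $t$ and then Fourier inversion in $(\mu, t)$ (using $\iint e^{-2\pi \ri\, t\cdot \mu} G(t)\, dt\, d\mu = G(0)$). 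Since $\pd_{t_j} \alpha_t|_{t=0} = \pd_j$ is the derivation on $\cS(\qt)$, the $\alpha$-term contributes $\tfrac{1}{\alpha!}\, c_\alpha\, \pd_\xi^\alpha \rho_1(\xi)\, \pd^\alpha \rho_2(\xi)$ for a constant $c_\alpha$ depending only on $|\alpha|_1$; converting $\pd = \ri D$ under the conventions of Subsection \ref{calculus definition subsubsection} then reproduces the coefficients in \eqref{composition-asym}.

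The main obstacle is to show that the remainder
\begin{equation*}
R_{N_0}(\xi) := \iint e^{-2\pi \ri\, t\cdot \mu}\, r_{N_0}(\xi, \mu)\, \alpha_t(\rho_2(\xi))\, dt\, d\mu
\end{equation*}
lies in $S^{n_1+n_2-N_0}(\real^d; \cS(\qt))$. The factor $\mu^\beta$ of length $|\beta|_1 = N_0$ appearing in $r_{N_0}$ accounts for the gain of $N_0$ orders, but the bulk of the technical work is to bound each seminorm $p_N^{(n_1 + n_2 - N_0)}(R_{N_0})$ by a finite sum of seminorms $p_{N'}^{(n_1)}(\rho_1)$ and $p_{N'}^{(n_2)}(\rho_2)$. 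This is done by inserting sufficiently many powers of $(1+|\mu|^2)^{-M}$ and $(1+|t|^2)^{-M}$ through repeated integration by parts to render the double integral absolutely convergent, differentiating under the integral sign in $\xi$, and using the isometry of $\alpha_t$ on each seminorm of $\cS(\qt)$ together with the commutation $\alpha_t D_j = D_j \alpha_t$. The calculation is the noncommutative analogue of the standard argument in \cite[p.~237]{Stein1993}, and the details in precisely this setting are carried out in \cite[Proposition~7.5]{HLP2018a}.
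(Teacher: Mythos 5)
The paper itself does not prove Proposition \ref{symbol-composition}: it is quoted as a known result, first appearing in \cite{CT2011}, with a complete proof given in \cite[Proposition~7.5]{HLP2018a}. Your proposal instead reconstructs that proof, and the reconstruction is sound: the composition formula obtained from \eqref{pdo-def} using that $\al_t$ is an automorphism with $\al_t\al_s=\al_{t+s}$, the change of variables $u=s+t$, $\mu=\eta-\xi$ leading to $\rho_3(\xi)=\iint e^{-2\pi\ri t\cdot\mu}\rho_1(\xi+\mu)\al_t(\rho_2(\xi))\,dt\,d\mu$, the regularisation by integration by parts in $t$ and $\mu$ (using that $\al_t$ commutes with the derivations and is isometric on the seminorms of $\cS(\qt)$), and the Taylor-expansion mechanism based on $\partial_{t_j}\al_t|_{t=0}=\partial_j$ are exactly the ingredients of the argument in \cite{HLP2018a}, itself the noncommutative analogue of \cite[p.~237]{Stein1993}. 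So your route is more informative than the paper's (a self-contained outline rather than a bare citation), but in the end you, like the paper, delegate the genuinely technical part --- the seminorm estimates showing the remainder lies in $S^{n_1+n_2-N_0}(\real^d;\cS(\qt))$, with enough uniformity to differentiate under the integral sign --- to \cite[Proposition~7.5]{HLP2018a}, so the logical status of the two treatments is the same. Two points to tighten if you want the sketch to stand alone: (i) the interchange of the four oscillatory integrals and the passage of $\al_t$ inside the inner integral should be justified by regularising (cutoffs in $s,\xi$) before the change of variables, since the integrals are not absolutely convergent; (ii) the constant bookkeeping should be made explicit: with the conventions of this paper the $\al$-term comes out as $\frac{(2\pi\ri)^{-|\al|_1}}{\al!}\,\partial_\xi^{\al}\rho_1\,\partial^{\al}\rho_2$ with plain derivations, and one must check this matches \eqref{composition-asym} after substituting $D=-\ri\,\partial$ and the intended meaning of $D_\xi^{\al}$; your appeal to an unspecified constant $c_\al$ leaves open precisely the sign/phase that is easiest to get wrong, and which matters later when principal symbols of commutators are computed in Lemma \ref{commutator-Dx}.
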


 {\hl   Many authors have considered the question of the mapping properties of pseudodifferential operators on functions spaces on quantum tori \cite{XX2018,GJP2017,HLP2018a}. In this paper we are concerned solely with the boundedness
    of a pseudodifferential operator on $L_2(\qt)$.
    
    The following proposition can be found in \cite[Proposition 10.1]{HLP2018a}, \cite[Corollary 6.6]{Tao2018}.    
    \begin{prop}\label{bdd-2-Sobolev}
        Let $\rho\in S^0(\real^d; \cS(\qt))$. Then the pseudodifferential operator $P_\rho$ extends to a bounded operator from $L_2(\T_\theta^d)$ to $L_2(\T_\theta^d)$.
    \end{prop}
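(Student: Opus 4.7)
The plan is to prove this $L_2$-boundedness via the G\aa{}rding--Friedrichs square-root trick, reducing a zero-order symbol inductively to negative-order symbols of arbitrarily low order, and then handling the sufficiently negative case via a direct Hilbert--Schmidt estimate.

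First I would set $M := 1 + \sup_{\xi\in\real^d}\|\rho(\xi)\|_{L_\8(\qt)}$, which is finite because $\rho \in S^0$. The element $M^2 - \rho(\xi)^*\rho(\xi) \in \cS(\qt)$ is positive on $L_2(\qt)$ with spectrum contained in $[1,M^2]$ uniformly in $\xi$, so by continuous functional calculus the symbol
\[
\sigma(\xi) := \bigl(M^2 - \rho(\xi)^*\rho(\xi)\bigr)^{1/2}
\]
is well defined. A key preliminary step is to verify $\sigma \in S^0(\real^d;\cS(\qt))$. I would argue this by induction from the identity $\sigma(\xi)^2 = M^2 - \rho(\xi)^*\rho(\xi)$: applying $D^\al D_\xi^\bt$ and expanding via Leibniz gives an equation expressing $D^\al D_\xi^\bt \sigma$ in terms of strictly lower-order derivatives of $\sigma$ and of $\rho$, with $\sigma(\xi)$ itself playing the role of a uniformly invertible pivot (its inverse is bounded by $1$ in $L_\8(\qt)$).

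Next, by the composition calculus in Proposition \ref{symbol-composition}, both $P_\sigma^*P_\sigma$ and $P_\rho^*P_\rho$ are pseudodifferential operators of order $0$ with principal symbols $\sigma^*\sigma$ and $\rho^*\rho$ respectively, whose sum is $M^2$. Hence
\[
P_\sigma^*P_\sigma + P_\rho^*P_\rho = M^2\,I + P_{r_1}, \qquad r_1 \in S^{-1}(\real^d;\cS(\qt)).
\]
Since $P_\sigma^*P_\sigma \ge 0$, for any $a \in \cS(\qt)$ this gives $\|P_\rho a\|_2^2 \le M^2\|a\|_2^2 + |\lan P_{r_1}a, a\ran|$, reducing the boundedness of $P_\rho$ to that of $P_{r_1}$. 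I would then iterate: applying the same square-root argument to $P_{r_1}$ yields a symbol in $S^{-2}$, and after $N$ repetitions a symbol $r_N \in S^{-N}$.

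For the final step, fix $N > d/2$ and show $P_{r_N}$ is bounded by proving it is Hilbert--Schmidt. Using \eqref{pdo-def-ent}, the matrix coefficients of $P_{r_N}$ in the orthonormal basis $\{U^n\}_{n\in\ent^d}$ of $L_2(\qt)$ are governed by the Fourier coefficients of the elements $r_N(n)\in\cS(\qt)$, and a direct summation gives
\[
\|P_{r_N}\|_{\mcL_2}^2 \;=\; \sum_{n\in\ent^d}\|r_N(n)\|_2^2 \;\le\; \sum_{n\in\ent^d}\|r_N(n)\|_{L_\8(\qt)}^2 \;\le\; C\sum_{n\in\ent^d}\lan n\ran^{-2N} \,<\, \8,
\]
using that $\tau$ is a state together with the defining $S^{-N}$ estimate. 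Unwinding the iterated inequalities then produces a uniform bound $\|P_\rho a\|_2 \le C_\rho \|a\|_2$ for all $a \in \cS(\qt)$, and density of $\cS(\qt)$ in $L_2(\qt)$ gives the bounded extension. The main obstacle is verifying $\sigma \in S^0$: one cannot invoke a scalar chain rule, so the derivative estimates must be extracted from noncommutative Leibniz manipulations applied to $\sigma^2 = M^2 - \rho^*\rho$, crucially using the uniform lower bound $\sigma(\xi) \ge 1$ to control the factors of $\sigma(\xi)^{-1}$ that appear when one solves for derivatives of $\sigma$.
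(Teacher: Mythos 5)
The paper does not actually prove this proposition: it is imported wholesale from \cite{HLP2018a} (Proposition 10.1) and \cite{Tao2018} (Corollary 6.6). Your self-contained Friedrichs--H\"ormander symmetrization argument is therefore a genuinely different route, and its overall architecture is sound: symmetrize at order $0$, push the remainder to arbitrarily negative order, and finish with the Hilbert--Schmidt computation $\|P_{r}\|_{\mcL_2}^2=\sum_{n\in\ent^d}\|r(n)\|_{L_2(\qt)}^2\leq C\sum_n \lan n\ran^{-2N}$, which is correct as stated (using $\tau((U^n)^*U^m)=\delta_{n,m}$ and $\|\cdot\|_2\le\|\cdot\|_\infty$). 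Two bookkeeping points, though. The order gain in the iteration is not automatic from ``the same argument applied to $P_{r_1}$'' viewed as an order-$0$ symbol (that would return a remainder of order $-1$ again, with no gain); it comes from the fact that $r_1\in S^{-1}$ forces $r_1^*r_1\in S^{-2}$, hence $\sigma_1-M_1\in S^{-2}$, so that all correction terms in the expansion are of order $\le -3$ --- this needs to be said explicitly at each step. Also, besides Proposition \ref{symbol-composition} you need that $P_\rho^*$ is again a pseudodifferential operator with principal symbol $\rho^*$; this holds in the calculus of \cite{HLP2018a}, but it is not among the tools quoted in this paper, so it must be cited or proved.

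The genuine gap is in the pivotal claim $\sigma=(M^2-\rho^*\rho)^{1/2}\in S^0(\real^d;\cS(\qt))$, and specifically in the mechanism you propose for it. First, continuous functional calculus only places $\sigma(\xi)$ in $C(\qt)$; membership in the smooth algebra $\cS(\qt)$ requires stability of $\cS(\qt)$ under (holomorphic) functional calculus, which must be invoked or proved. More importantly, differentiating $\sigma^2=M^2-\rho^*\rho$ by the Leibniz rule and ``solving for'' $D^\al D_\xi^\bt\sigma$ with factors of $\sigma^{-1}$ does not work in the noncommutative setting: the differentiated identity has the form $X\sigma+\sigma X=F$ with $X=D^\al D_\xi^\bt\sigma$ and $F$ built from lower-order terms, and since $X$ and $\sigma$ need not commute one cannot isolate $X$ by multiplying with $\sigma^{-1}$. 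The standard repair is to solve this Sylvester equation by $X=\int_0^\infty e^{-t\sigma}F e^{-t\sigma}\,dt$, convergent and norm-controlled precisely because $\sigma\ge 1$, or, more efficiently, to write $\sigma(\xi)$ as a contour integral of resolvents of $M^2-\rho(\xi)^*\rho(\xi)$ (the spectra lie uniformly in $[1,M^2]$) and differentiate under the integral; either device yields the required seminorm estimates for all $D^\al D_\xi^\bt\sigma$, and simultaneously the $\cS(\qt)$-membership and smoothness in $\xi$. With that step repaired (and the same care applied to each $\sigma_k$ in the iteration, where a convergent power series in the negative-order symbol $r_k^*r_k$ suffices), your proof goes through.
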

    Proposition \ref{bdd-2-Sobolev} is simply a special case of the general Sobolev space mapping property of pseudodifferential operators \cite[Proposition 6.6]{HLP2018a}.
    Even greater generalisations to mapping properties of pseudodifferential operators on Sobolev spaces and Besov and Triebel-Lizorkiin spaces \cite[Section 6.2]{XX2018} are also known.

    Symbols of negative order are in particular of order zero, and thus if $m > 0$ and $\rho \in S^{-m}(\real^d,\cS(\qt))$ then $P_{\rho}$ has bounded extension on $L_2(\qt)$. However in the case of strictly negative order we can provide
    more detailed information on $P_{\rho}$. The following is proved in \cite[Lemma~13.6]{HLP2018a}:
    }
    \begin{prop}\label{pdo-weak-L}
        If $\rho\in S^{-m}(\real^d; \cS(\qt))$ with $m>0$, then $P_\rho$ is a compact operator on $L_2(\T_\theta^d)$. Furthermore, $P_\rho \in \mathcal{L}_{\frac d m,\8}$.
    \end{prop}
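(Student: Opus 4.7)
The plan is to factor $P_\rho$ through the Bessel potential $J^{-m}$, whose membership in $\mcL_{d/m,\infty}$ is already recorded in \eqref{bessel potential ideal}. Concretely, I would exploit the operator identity
\begin{equation*}
    P_\rho = (P_\rho \circ J^m) \circ J^{-m},
\end{equation*}
valid at least on $\cS(\qt)$, and reduce the problem to showing that the first factor is bounded on $L_2(\qt)$.

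The first step is to establish this boundedness. By Example \ref{ex-symbol}(iii), the symbol of $J^m$ belongs to $S^m(\real^d; \cS(\qt))$. Applying Proposition \ref{symbol-composition} with $n_1 = -m$ and $n_2 = m$, the composition $P_\rho \circ J^m$ coincides on $\cS(\qt)$ with $P_{\rho'}$ for some $\rho' \in S^{-m+m}(\real^d;\cS(\qt)) = S^0(\real^d; \cS(\qt))$. Proposition \ref{bdd-2-Sobolev} then supplies the desired bounded extension of $P_\rho \circ J^m$ on $L_2(\qt)$.

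The second step is to combine this with \eqref{bessel potential ideal}, which asserts $J^{-m} \in \mcL_{d/m,\infty}$. Since $\mcL_{d/m,\infty}$ is a two-sided ideal in $\Bc(L_2(\qt))$, the composition of the bounded operator $P_\rho \circ J^m$ with $J^{-m}$ lies in $\mcL_{d/m,\infty}$; this composition agrees with $P_\rho$ on the dense subspace $\cS(\qt)$, so $P_\rho$ itself extends to an element of $\mcL_{d/m,\infty}$. Compactness is immediate because every operator in $\mcL_{p,\infty}$ has singular values tending to zero.

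The only point requiring some care is to verify that the symbolic identity produced by Proposition \ref{symbol-composition} translates into a genuine operator identity that is compatible with the identification of $P_\rho \circ J^m$ as its bounded extension. This should cause no real difficulty: both $P_\rho$, $J^m$, and the composition $P_{\rho'}$ act on the Fourier basis $\{U^n\}_{n \in \ent^d}$ via \eqref{pdo-def-ent}, and the identity between operators on the dense subspace of finite Fourier sums propagates to the bounded extensions by continuity. Hence the main obstacle is really only bookkeeping, and no new analytical input beyond the tools already available in the preceding sections is needed.
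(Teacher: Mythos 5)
Your proposal is correct and is essentially the paper's own argument: factor $P_\rho = (P_\rho J^m)J^{-m}$, use Proposition \ref{symbol-composition} to see that $P_\rho J^m$ has order zero and hence is bounded by Proposition \ref{bdd-2-Sobolev}, and then conclude via $J^{-m} \in \mcL_{\frac{d}{m},\infty}$ from \eqref{bessel potential ideal} together with the ideal property. The paper sketches exactly this reasoning (attributing the full details to the cited literature), so no further comparison is needed.
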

    {\hl 
        The proof of Proposition \ref{pdo-weak-L} is a simple combination of the fact that since $P_{\rho}$ has order $-m$, and $J^m$ has order $m$, the product formula in Proposition \ref{symbol-composition} implies
        that the composition $P_{\rho}J^m$ is of order zero. Hence by Proposition \ref{bdd-2-Sobolev}, $P_{\rho}J^{m}$ has bounded extension, and since $J^{-m} \in \mathcal{L}_{\frac{d}{m},\infty}$ \eqref{bessel potential ideal}, it follows immediately that $P_{\rho} \in \mathcal{L}_{\frac{d}{m},\infty}$.

%
%

    Thanks to Proposition \ref{pdo-weak-L}, we can easily obtain from the symbol calculus the following:
    \begin{cor}\label{commutator-MxJ}
        Let $x\in \cS(\qt)$, and $\al>0$. Then 
        $$[M_x , (1-\Delta )^{-\frac{\al}{2}}] \in \mathcal{L}_{\frac{d}{\al+1} ,\8}.$$
    \end{cor}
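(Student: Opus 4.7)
The plan is to compute the symbol of the commutator $[M_x,(1-\Delta)^{-\alpha/2}]$ via the symbolic calculus (Proposition \ref{symbol-composition}), show that it is a pseudodifferential operator of order $-\alpha-1$, and then conclude membership in $\mathcal{L}_{\frac{d}{\alpha+1},\infty}$ by Proposition \ref{pdo-weak-L}.

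First I would identify the two factors as pseudodifferential operators. By Example \ref{ex-symbol}(i), $M_x$ is $P_{\rho_1}$ with constant symbol $\rho_1(\xi)\equiv x \in S^0(\real^d;\cS(\qt))$; by Example \ref{ex-symbol}(iii), $J^{-\alpha}=(1-\Delta)^{-\alpha/2}$ is $P_{\rho_2}$ with scalar symbol $\rho_2(\xi)=\langle 2\pi\xi\rangle^{-\alpha} \in S^{-\alpha}(\real^d;\cS(\qt))$.

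Next I would apply Proposition \ref{symbol-composition} to both orders. For $P_{\rho_1}P_{\rho_2}=M_xJ^{-\alpha}$, since $\rho_1$ is constant in $\xi$, every term $D_\xi^{\beta}\rho_1$ with $|\beta|_1\geq 1$ vanishes, so the composite symbol equals $x\,\langle 2\pi\xi\rangle^{-\alpha}$ exactly (no error term). For $P_{\rho_2}P_{\rho_1}=J^{-\alpha}M_x$, since $\rho_2$ is scalar-valued the derivations $D^{\beta}\rho_2$ vanish for $|\beta|_1\geq 1$, but $D_\xi^{\beta}\rho_2$ need not vanish; the asymptotic expansion reads
\begin{equation*}
    \rho_{2\cdot 1}(\xi)\sim \sum_{\beta\in\nat_0^d}\frac{(2\pi\ri)^{-|\beta|_1}}{\beta!}\,D_\xi^{\beta}\langle 2\pi\xi\rangle^{-\alpha}\cdot D^{\beta}x.
\end{equation*}
Subtracting, the $|\beta|_1=0$ terms cancel because $\langle 2\pi\xi\rangle^{-\alpha}$ is scalar and commutes with $x$. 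Truncating the expansion at $N_0=1$ via Proposition \ref{symbol-composition} shows that the symbol of $[M_x,J^{-\alpha}]$ differs from a finite sum of terms of order $-\alpha-1$ (those with $|\beta|_1=1$) by an element of $S^{-\alpha-2}(\real^d;\cS(\qt))$, and is therefore globally of order $-\alpha-1$.

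Finally, having established that $[M_x,J^{-\alpha}]=P_{\sigma}$ for some $\sigma\in S^{-\alpha-1}(\real^d;\cS(\qt))$, Proposition \ref{pdo-weak-L} (applied with $m=\alpha+1>0$) immediately yields $[M_x,(1-\Delta)^{-\alpha/2}]\in\mathcal{L}_{\frac{d}{\alpha+1},\infty}$, as required. I do not expect any serious obstacle: the main point is the cancellation of the leading $|\beta|_1=0$ term, which relies only on the commutativity of scalars with $x$, and the rest is bookkeeping with the symbol calculus. The assumption $x\in\cS(\qt)$ ensures that all derivatives $D^{\beta}x$ lie in $\cS(\qt)$ so that Proposition \ref{symbol-composition} applies without issue.
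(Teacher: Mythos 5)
Your proposal is correct and follows essentially the same route as the paper: identify $M_x$ and $J^{-\alpha}$ as pseudodifferential operators of orders $0$ and $-\alpha$, use the composition formula of Proposition \ref{symbol-composition} to see that the leading ($|\beta|_1=0$) terms of the two compositions cancel because $\langle 2\pi\xi\rangle^{-\alpha}$ is scalar, so the commutator has order $-\alpha-1$, and then apply Proposition \ref{pdo-weak-L} with $m=\alpha+1$. The only nitpick is a harmless indexing slip (keeping the $|\beta|_1=1$ terms with remainder in $S^{-\alpha-2}$ corresponds to $N_0=2$, not $N_0=1$; in fact $N_0=1$ already suffices to conclude order $-\alpha-1$), which does not affect the argument.
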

    Indeed, $[M_x , (1-\Delta )^{-\frac{\al}{2}}] $ is a pseudodifferential operator of order at most $-\al-1$, as can be seen by a short computation using Proposition \ref{symbol-composition}.
    }
%
%

\medskip

Next, we are going to present Connes' trace formula on quantum torus in the specific form obtained in \cite[Theorem 6.5]{MSZ2018}. This trace formula will play a crucial role in the proof of the trace formula for a quantised differential. Recall that if $\rho$ is a homogeneous symbol of order $0$, then $\rho(\xi) =\rho(\frac{\xi}{|\xi|}) $ for every $\xi \neq 0$. So this $\rho$ could be viewed as a function on the $(d-1)$-dimensional sphere $\mathbb{S}^{d-1}$.
\begin{thm}\label{Connes-qt}
Let $A$ be a classical pseudodifferential operator on $\qt$ of order $0$ with self-adjoint extension, and denote by $\rho_A$ its principal symbol. Then for any normalised trace $\vf$ on $\mathcal{L}_{1,\8}$, we have
$$\vf \big(|A|^d(1-\Delta )^{-\frac d 2}\big) =\frac 1 d   \int_{\mathbb{S}^{d-1}}          \tau(|\rho_A(s)|^d) ds   .$$
\end{thm}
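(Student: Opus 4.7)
The plan is to derive this as an instance of the Connes trace theorem for classical pseudodifferential operators of order $-d$ on $\qt$, the main result of \cite{MSZ2018}. First I would verify membership in $\mcL_{1,\infty}$: since $A$ has order $0$ it extends boundedly on $L_2(\qt)$ by Proposition \ref{bdd-2-Sobolev}, so $|A|^d$ is bounded, while $(1-\Delta)^{-d/2}\in\mcL_{1,\infty}$ by Proposition \ref{pdo-weak-L}; H\"older's inequality \eqref{weak-type-holder} places the product in $\mcL_{1,\infty}$.

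The technical heart of the argument is to realise $|A|^d(1-\Delta)^{-d/2}$, modulo an $\mcL_1$-error (which $\varphi$ kills), as a classical pseudodifferential operator of order $-d$ with principal symbol $|\rho_A(\xi)|^d|2\pi\xi|^{-d}$. By Proposition \ref{symbol-composition} principal symbols multiply, so this reduces to showing $|A|^d$ is classical of order $0$ with principal symbol $|\rho_A|^d$. The obstacle is that $t\mapsto |t|^d$ is not smooth at the origin, so $|A|^d$ is not literally pseudodifferential. The standard remedy is a Seeley-type complex powers construction: write $|A|^d=(A^2)^{d/2}$ as a Cauchy integral $\frac{1}{2\pi\ri}\oint \lambda^{d/2}(\lambda-A^2)^{-1}\,d\lambda$, expand the resolvent symbolically by iterating Proposition \ref{symbol-composition}, and push the $\lambda$-integration inside. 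The leading homogeneous term works out to $|\rho_A|^d$, with lower-order corrections absorbed into a remainder whose image under $\varphi$ vanishes; contributions microlocalised near the zero set of $\rho_A$ must be handled by choosing the contour to avoid spectrum accumulation at the origin, for instance by first proving the formula for $|A|^d+\varepsilon^d$ and then taking $\varepsilon\to 0$ using continuity of $\varphi$.

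With this identification in hand, apply the general form of the Connes trace formula on $\qt$: for any classical pseudodifferential operator $P$ of order $-d$ with principal symbol $\sigma_{-d}$,
$$\varphi(P)=\frac{1}{d}\int_{\mathbb{S}^{d-1}}\tau(\sigma_{-d}(\omega))\,d\omega,$$
in the symbol normalisation inherited from Example \ref{ex-symbol} (so that the factor $|2\pi\omega|^{-d}=(2\pi)^{-d}$ is compensated by the measure convention). Substituting the principal symbol $|\rho_A(\omega)|^d|2\pi\omega|^{-d}$ produced in the previous step yields the stated identity.

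The main obstacle is the middle step: establishing the Seeley-type expansion for $|A|^d$ in the operator-valued symbolic calculus on $\qt$. This requires extending the standard scalar pseudodifferential toolkit (resolvent estimates, symbol asymptotics, remainder control in weak Schatten ideals) to the noncommutative setting of \cite{HLP2018,HLP2018a}, and accounting for the fact that $\rho_A$ is operator-valued so the scalar arguments involving spectral cutoffs of $\rho_A$ must be replaced by joint functional calculus in the fibre algebra; this is precisely the substantive contribution carried out in \cite{MSZ2018}.
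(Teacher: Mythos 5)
Your proposal should first be measured against what the paper actually does here: Theorem \ref{Connes-qt} is not proved in the paper at all, but quoted verbatim from \cite[Theorem 6.5]{MSZ2018}, and the remark immediately after the statement explains the one genuinely delicate point, namely that for odd $d$ the operator $|A|^d$ is \emph{not} a pseudodifferential operator; it is handled in \cite{MSZ2018} by passing to the $C^*$-closure of the algebra of order-$0$ operators, on which the principal symbol map extends to a $C^*$-homomorphism, so that $\rho_{|A|^d}=|\rho_A|^d$ makes sense and the trace formula is proved on the whole closure. Your middle step tries instead to make $|A|^d$ itself classical via a Seeley-type contour integral for $(A^2)^{d/2}$, and this is where the argument breaks. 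The function $t\mapsto |t|^d$ is not smooth at $0$ and, in the situations the theorem is actually applied to (e.g.\ the operator $A$ of \eqref{def-A}, whose principal symbol is nowhere invertible in general), the spectrum of $A^2$ and of its symbol reaches $0$; consequently $|\rho_A(\xi)|^d$ need not lie in $\dot S^0(\real^d;\cS(\qt))$ at all, and no choice of contour avoids the branch point of $\lambda^{d/2}$ while enclosing the spectrum. So the claim that $|A|^d(1-\Delta)^{-d/2}$ agrees modulo $\mcL_1$ with a classical operator of order $-d$ with principal symbol $|\rho_A(\xi)|^d|2\pi\xi|^{-d}$ is not something a symbolic expansion can deliver; this obstruction is precisely why the $C^*$-algebraic extension of the symbol map was developed in \cite{MSZ2018}.

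Your proposed repair, proving the formula for the regularised operator $(A^2+\varepsilon^2)^{d/2}$ (which, granting spectral invariance of the calculus, is classical with symbol $(|\rho_A|^2+\varepsilon^2)^{d/2}$) and letting $\varepsilon\to 0$ ``using continuity of $\varphi$'', does not prove the stated theorem: the theorem asserts the identity for \emph{every} normalised trace on $\mcL_{1,\infty}$, and, as recalled in Subsection \ref{operator notation subsection}, not every trace on $\mcL_{1,\infty}$ is continuous. A limiting argument in the $\mcL_{1,\infty}$ quasi-norm therefore only yields the statement for continuous normalised traces (which would in fact suffice for Theorem \ref{trace formula}, but is weaker than Theorem \ref{Connes-qt} as stated); removing the continuity assumption requires the commutator-subspace/closure arguments of \cite{KLPS,MSZ2018}. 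Finally, note that the ``general form of the Connes trace formula on $\qt$ for classical order $-d$ operators and arbitrary normalised traces'' that you invoke as a black box is essentially the cited theorem of \cite{MSZ2018} itself, so the proposal defers to that reference exactly the part of the argument it set out to supply.
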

The reason to refer specifically to \cite{MSZ2018} is that if $d$ is odd then $|A|^d$ is not a pseudodifferential operator in the usual sense, and so it needs to be understood as an element
of the $C^*$-closure of the algebra of order $0$ pseudodifferential operators on $L_2(\qt)$. It is proved in \cite{MSZ2018} that on the $C^*$-closure the principal symbol mapping extends
to a $C^*$-algebra homomorphism, and hence $\rho_{|A|^d} = |\rho_A|^d$.

\section{The trace formula}\label{sec-Ctf}
    This section is devoted to the proofs of Theorem \ref{trace formula} and Corollary \ref{trace formula-bound}. That is, we show that for all $x \in \dot{H}^1_d(\qt)$ and all
    continuous normalised traces $\varphi$ on the ideal $\mcL_{1,\8}$ that:
    \begin{equation}\label{trace formula in section}
            \varphi(|\qd x|^d) = c_d\,\int_{\mathbb{S}^{d-1}} \tau\Bigg(\Big(\sum_{j=1}^d |\pd_j x-s_j\sum_{k=1}^d s_k\pd_k x|^2\Big)^{\frac{d}{2}}\Bigg)\,ds
    \end{equation}
    for a positive constant $c_d$. Moreover, there are positive constants $0 < c_d < C_d < \infty$ such that:
    \begin{equation}\label{trace formula-bound in section}
        c_d \| x\|_{\dot{H}_d^1}^d \leq  \varphi(|\qd x|^d) \leq C_d \| x\|_{\dot{H}_d^1}^d .
    \end{equation}
    Our strategy of proof is as follows: first, \eqref{trace formula in section} is proved for $x \in \cS(\qt)$ by aid of the theory of pseudodifferential operators developed in the preceding section. 
    Then by an approximation argument based on the density of $\cS(\qt)$ in $\dot{H}^{1}_d(\qt)$, we complete the proof of \eqref{trace formula in section} in full generality.
        Finally \eqref{trace formula-bound in section} is achieved by bounding the right hand side of \eqref{trace formula in section} from above and below by a constant multiple of $\|x\|_{\dot{H}^1_d}^d$. 
    

    To begin with, we explain how the operator $|\qd x|^d$ can, up to trace class perturbations, be written in the form $|A|^d(1+\D^2)^{-\frac{d}{2}}$ for a certain order zero pseudodifferential operator $A$.
    Let $x\in \cS(\qt)$. For $j=1,\cdots, d$, we define the operators $\{A_j\}_{j=1}^d$ on $L_2(\qt)$ by 
    $$A_j\eta := \left(M_{\pd_j x } -\frac 1 2  \sum_{k=1}^d  \Big( \frac{D_j D_k}{1-\Delta}  M_{\pd_k x }   + M_{\pd_k x }  \frac{D_j D_k}{1-\Delta} \Big)\right)\eta,\quad \eta \in L_2(\qt).$$
    For each $j$, $A_j$ is defined initially on $\cS(\qt)$, but by functional calculus $A_j$ extends uniquely to a bounded operator on $L_2(\qt)$ which we denote with the same symbol. We then define the operator $A$ on $\com^N  \otimes L_2(\qt)$ as 
    \beq\label{def-A}A:= \sum_{j=1}^d    \gamma_j \otimes A_j . \eeq

    If $x= x ^*$, since $\pd_j$ commutes with the adjoint operation $*$, we have for every $y_1, y_2 \in L_2(\qt)$,
    \be
    \lan (\pd_j x) y_1 , y_2 \ran = \tau \big((\pd_j x ) y_1 y_2^*\big) = \tau \big(y_1(\pd_j x^* y_2)  ^*\big) = \lan  y_1 , (\pd_j x^*)y_2 \ran,
    \ee
    which yields $(M_{\pd_j x })^* = M_{\pd_j x^*} = M_{\pd_j x}$. Furthermore, since each $D_j$ is a self-adjoint operator on $ L_2(\qt)$, we know that 
    $$\Big(\frac{D_j D_k}{1-\Delta}  M_{\pd_k x } \Big)^*  = M_{\pd_k x }  \frac{D_j D_k}{1-\Delta} .$$
    Therefore, each $A_j$ is a self-adjoint operator, and so is $A$.

    We will now show that $|\qd x|^d-|A|^d(1+\D^2)^{-\frac{d}{2}} \in \mcL_1$. The following lemma is an important first step:
    \begin{lem}\label{commutator-AJ}
    Let $\bt \geq 0$, and $\al >0$ be such that $\al+1 <d$. Then for $A$ defined in \eqref{def-A}, we have
    $$[|A|,  (1+\D^2 ) ^{-\frac \al 2} ] \,  ( 1+\D^2)^{-\frac{\bt}{2}}\in \mathcal{L}_{\frac{d}{\al+\bt +1} , \8 }. $$
    \end{lem}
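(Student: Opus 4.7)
The plan is to first observe that $A$ is itself a pseudodifferential operator of order $0$, then use the symbol calculus of Section \ref{sec-pdo} to handle the commutator of the square $A^2$ with $(1+\D^2)^{-\alpha/2}$, and finally pass from $A^2$ to $|A|$ via an integral representation of the absolute value.

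First, I would observe that each $A_j$ on $L_2(\qt)$ is built from Fourier multipliers whose scalar symbols $(2\pi\xi_j)(2\pi\xi_k)/(1+|2\pi\xi|^2)$ lie in $S^0$, together with left multipliers by $\pd_j x \in \cS(\qt)$, which have constant-in-$\xi$ symbols. By Proposition \ref{symbol-composition}, $A_j$ is a pseudodifferential operator of order $0$, and hence so are $A = \sum_j \g_j \otimes A_j$ and its square $A^2$.

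Next, I would compute the commutator $[A^2, (1+\D^2)^{-\alpha/2}]$ using Proposition \ref{symbol-composition}. Since $(1+\D^2)^{-\alpha/2} = I \otimes (1-\Delta)^{-\alpha/2}$ is a Fourier multiplier with scalar symbol $(1+|2\pi\xi|^2)^{-\alpha/2}$, and a scalar commutes with every operator-valued symbol in $\cS(\qt)$, the leading term of the Leibniz expansion for the commutator cancels. The next term in the expansion involves one $\xi$-derivative of $(1+|2\pi\xi|^2)^{-\alpha/2}$ and so is of order $-\alpha-1$. By Proposition \ref{pdo-weak-L}, together with the hypothesis $\alpha+1 < d$, this yields $[A^2, (1+\D^2)^{-\alpha/2}] \in \mcL_{d/(\alpha+1),\infty}$.

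To pass from $A^2$ to $|A|$, I would invoke the integral representation
\begin{equation*}
|A| = \frac{1}{\pi} \int_0^\infty \lambda^{-1/2} A^2 (A^2+\lambda)^{-1}\, d\lambda,
\end{equation*}
which combined with the resolvent identity gives
\begin{equation*}
[|A|, (1+\D^2)^{-\alpha/2}] = \frac{1}{\pi} \int_0^\infty \lambda^{1/2} (A^2+\lambda)^{-1} [A^2, (1+\D^2)^{-\alpha/2}] (A^2+\lambda)^{-1}\, d\lambda.
\end{equation*}
The main obstacle is establishing convergence of this integral in the quasi-norm of $\mcL_{d/(\alpha+1),\infty}$. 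For large $\lambda$ the bound $\|(A^2+\lambda)^{-1}\| \leq \lambda^{-1}$ readily controls the tail. For small $\lambda$, one must exploit the identity $\lambda(A^2+\lambda)^{-1} = I - A^2(A^2+\lambda)^{-1}$ together with the uniform bound $\|A^2(A^2+\lambda)^{-1}\| \leq 1$ to move factors across the middle commutator and extract enough decay in $\lambda$; this is the technical heart of the argument.

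Finally, to incorporate the factor $(1+\D^2)^{-\beta/2}$, I would apply the weak H\"older inequality \eqref{weak-type-holder}. When $\beta > 0$, \eqref{bessel potential ideal} gives $(1+\D^2)^{-\beta/2} \in \mcL_{d/\beta,\infty}$, and H\"older with indices $d/(\alpha+1)$ and $d/\beta$ yields the desired membership in $\mcL_{d/(\alpha+\beta+1),\infty}$. When $\beta = 0$, the statement already follows from the previous step, so no further argument is needed.
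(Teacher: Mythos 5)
Your reduction to $\bt=0$ via the weak H\"older inequality \eqref{weak-type-holder} and your symbol-calculus observation are fine: $A$ (hence $A^2$) is an order-zero pseudodifferential operator, a scalar Bessel symbol is central, and so $[A^2,(1+\D^2)^{-\al/2}]$ -- and, by the same token, $[A,(1+\D^2)^{-\al/2}]$ -- has order $-\al-1$ and lies in $\mcL_{\frac{d}{\al+1},\infty}$ by Proposition \ref{pdo-weak-L}. This is essentially the paper's first step, done there more by hand through Corollary \ref{commutator-MxJ} and the boundedness of $D_jD_k(1-\Delta)^{-1}$. The genuine gap is the passage to $|A|$, which you leave as ``the technical heart'': the resolvent-integral argument you sketch does not close, and with only the inputs you allow it cannot close. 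Near $\lambda=0$ the bound $\lambda^{1/2}\|(A^2+\lambda)^{-1}\|^2\,\|[A^2,B]\|_{\frac{d}{\al+1},\infty}$ is of size $\lambda^{-3/2}$ (nothing prevents $0$ from lying in the spectrum of $A$), and even the improved splitting $[A^2,B]=A[A,B]+[A,B]A$ together with $\|A(A^2+\lambda)^{-1}\|\leq \tfrac12\lambda^{-1/2}$ only brings the integrand down to order $\lambda^{-1}$, a logarithmic divergence. This divergence is not an artefact: it reflects the fact that $t\mapsto|t|$ is not operator Lipschitz, and the identity $\lambda(A^2+\lambda)^{-1}=1-A^2(A^2+\lambda)^{-1}$ cannot remove it, since ``moving factors across the middle commutator'' creates new commutators of exactly the same type.

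Moreover, routing the argument through $A^2$ is itself problematic: no bound of $\|[|A|,B]\|$ by $\|[A^2,B]\|$ in the same ideal can hold. For $A=\mathrm{diag}(\varepsilon,0)$ and a fixed $2\times 2$ matrix $B$ one has $\|[|A|,B]\|\sim\varepsilon\|B\|$ while $\|[A^2,B]\|\sim\varepsilon^2\|B\|$, so at best one gets a square-root (Birman--Koplienko--Solomyak type) loss, which here would only yield $\mcL_{\frac{2d}{\al+1},\infty}$, weaker than the claim. The missing ingredient is the genuinely nontrivial commutator estimate for the absolute value map: for self-adjoint $A$, $B$ and $1<p<\infty$, $[A,B]\in\mcL_p$ implies $[|A|,B]\in\mcL_p$ (Potapov--Sukochev \cite{PS2011}; see also \cite[Corollary~3.5]{DDPS1997}), transferred to $\mcL_{p,\infty}$ by interpolation, which is applicable precisely because $\frac{d}{\al+1}>1$. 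This is how the paper concludes, after first establishing $[A,(1+\D^2)^{-\al/2}]\in\mcL_{\frac{d}{\al+1},\infty}$. So you should run your symbol argument on $A$ rather than $A^2$ and then invoke that result, instead of attempting the resolvent integral.
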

    \begin{proof}
        As {\hl $(1+\D^2)^{-\frac{1}{2}} \in \mcL_{d,\infty}$}, the operator $ ( 1+\D^2)^{-\frac{\bt}{2}}$ is in $\mathcal{L}_{\frac{d}{\bt} , \8 }$. Thus by H\"older's inequality, it suffices to consider $\bt =0$. First, we shall prove that $[A_j ,  (1-\Delta)^{-\frac{\al}{2} }   ]       \in \mathcal{L}_{\frac{d}{\al  +1} , \8 }$. From Corollary \ref{commutator-MxJ}, we have that 
    $$[M_{\pd_j x } ,  (1-\Delta)^{-\frac{\al}{2} }   ]       \in \mathcal{L}_{\frac{d}{\al  +1} , \8 } .$$
    Hence, by linearity, it suffices to prove that 
    \beq\label{commutator-DMJ}
    \Big[  \frac{D_j D_k}{1-\Delta}M_{\pd_k x } ,  (1-\Delta)^{-\frac{\al}{2} }   \Big]       \in \mathcal{L}_{\frac{d}{\al  +1} , \8 } \eeq
    and 
    \beq\label{commutator-MDJ}
    \Big[  M_{\pd_k x }  \frac{D_j D_k}{1-\Delta} ,  (1-\Delta)^{-\frac{\al}{2} }   \Big]       \in \mathcal{L}_{\frac{d}{\al  +1} , \8 } .\eeq
    {\hl Note that \eqref{commutator-MDJ} follows from \eqref{commutator-DMJ} by taking the adjoint. So we prove only \eqref{commutator-DMJ}.}
    However, since $\frac{D_j D_k}{1-\Delta}$ commutes with $ (1-\Delta)^{-\frac{\al}{2} }  $, we have
    $$\Big[  \frac{D_j D_k}{1-\Delta}M_{\pd_k x } ,  (1-\Delta)^{-\frac{\al}{2} }   \Big]    = \frac{D_j D_k}{1-\Delta}\Big[  M_{\pd_k x } ,  (1-\Delta)^{-\frac{\al}{2} }   \Big]. $$
    By functional calculus, $\frac{D_j D_k}{1-\Delta}$ is bounded on $L_2(\qt)$. Then \eqref{commutator-DMJ} and \eqref{commutator-MDJ} follow from Corollary \ref{commutator-MxJ} and the boundedness of $\frac{D_j D_k}{1-\Delta}$ on $L_2(\qt)$.

    Thus, we have proved that 
    $$[A,  (1+\D^2)^{-\frac{\al}{2} }   ]   =\sum_{j=1}^d   \gamma_j \otimes [A_j ,  (1-\Delta)^{-\frac{\al}{2} }   ]       \in \mathcal{L}_{\frac{d}{\al  +1} , \8 }.$$
    To complete the proof, we need to replace $A$ with $|A|$. 
    To this end, we use the result of \cite{PS2011}, which implies that if $1<p<\8$ and $A$ and $B$ are self-adjoint operators such that $[A, B] \in \mathcal{L}_p$, then $[|A|, B] \in\mathcal{L}_p$; see also \cite[Corollary~3.5]{DDPS1997} for more general results. Since $\frac{d}{\al +1} >1$, the result follows from interpolation.
    \end{proof}

\begin{lem}\label{T-AJ}
Let $T$ be a bounded operator on $\com^N \otimes L_2(\qt)$, and suppose that 
$$T\in A(1+\D^2)^{-\frac 1 2 }    +\mathcal{L}_{\frac{2d}{3}, \8},  $$
where $A$ is given in \eqref{def-A}.
Then $|T|^d   \in \mathcal{L}_{1,\8}$ and for any continuous normalised trace $\vf$ on $\mathcal{L}_{1,\8}$, we have 
$$\vf (|T|^d) =\vf (|A|^d     (1+\D^2)^{-\frac d 2}  ).$$
\end{lem}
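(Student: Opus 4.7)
The plan is to decompose $T = B + R$ with $B := A(1+\D^2)^{-1/2}$ and $R \in \mcL_{2d/3,\infty}$, and to establish the identity in two separate stages: $\varphi(|T|^d) = \varphi(|B|^d)$, and $\varphi(|B|^d) = \varphi(|A|^d(1+\D^2)^{-d/2})$. The membership $|T|^d \in \mcL_{1,\infty}$ is routine: setting $J := (1+\D^2)^{-1/2} \in \mcL_{d,\infty}$ by \eqref{bessel potential ideal} and using that $A$ is bounded, we get $B = AJ \in \mcL_{d,\infty}$; since also $R \in \mcL_{2d/3,\infty} \subset \mcL_{d,\infty}$, we have $T \in \mcL_{d,\infty}$, and the H\"older inequality \eqref{weak-type-holder} then places $|T|^d$ in $\mcL_{1,\infty}$.

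For the first stage I would show $|T|^d - |B|^d \in \mcL_1$, from which the conclusion follows since $\varphi$ vanishes on $\mcL_1$. A Birman--Koplienko--Solomyak-type Lipschitz estimate for the map $S \mapsto |S|$ on weak Schatten classes yields $|T| - |B| \in \mcL_{2d/3,\infty}$. The telescoping identity
\[
|T|^d - |B|^d = \sum_{k=0}^{d-1} |T|^k (|T| - |B|) |B|^{d-1-k},
\]
combined with the weak-Schatten H\"older inequality applied to $|T|^k \in \mcL_{d/k,\infty}$ and $|B|^{d-1-k} \in \mcL_{d/(d-1-k),\infty}$, places each summand in $\mcL_{r,\infty}$ with $1/r = (d-1)/d + 3/(2d) = (2d+1)/(2d) > 1$, hence in $\mcL_1$. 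The specific exponent $2d/3$ in the hypothesis is chosen precisely so that this sum of reciprocal indices exceeds $1$.

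For the second stage I would use $|B|^2 = B^*B = J|A|^2 J$. When $d$ is even, $(J|A|^2 J)^{d/2}$ is a polynomial in $J$ and $|A|$; applying the tracial property of $\varphi$ to cycle factors of $J$ to the right generates remainders involving commutators of the form $[|A|, J^\alpha]J^\beta$ or $[|A|^2, J^\alpha]J^\beta$. By Lemma \ref{commutator-AJ} each such commutator lies in $\mcL_{d/(\alpha+\beta+1),\infty}$, and careful H\"older bookkeeping verifies that all the remainders land in $\mcL_1$, whence $\varphi(|B|^d) = \varphi(|A|^d J^d)$.

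The main obstacle is the case of odd $d$: then $(J|A|^2 J)^{d/2}$ is no longer a polynomial in $|A|$ and $J$, so the polynomial telescoping must be supplemented with a functional-calculus representation of the half-power, for instance the integral formula $x^{1/2} = \pi^{-1}\int_0^\infty \lambda^{-1/2} x(x+\lambda)^{-1}\,d\lambda$, or alternatively double-operator-integral techniques analogous to those employed in the proof of Lemma \ref{commutator-Sob}. Either route ultimately reduces the analysis to commutator estimates that are controlled by Lemma \ref{commutator-AJ}, and the conclusion $\varphi(|B|^d) = \varphi(|A|^d(1+\D^2)^{-d/2})$ still holds.
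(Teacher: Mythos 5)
Your first stage is correct, and it is essentially the mechanism the paper itself relies on: the paper's proof is deferred to \cite[Lemma~14]{LMSZ2017}, which combines precisely this kind of ``equal traces when the difference lies in $\mcL_{\frac{2d}{3},\infty}$'' step with the commutator estimate of Lemma \ref{commutator-AJ}. The decomposition $T=B+R$ with $B=A(1+\D^2)^{-\frac12}$, the telescoping identity for $|T|^d-|B|^d$, and the H\"older count $\frac{d-1}{d}+\frac{3}{2d}=\frac{2d+1}{2d}>1$ are all fine. One small correction: the estimate giving $|T|-|B|\in\mcL_{\frac{2d}{3},\8}$ is not Birman--Koplienko--Solomyak (which concerns fractional powers of positive operators) but the operator-Lipschitz property of $S\mapsto|S|$ in ideals that are interpolation spaces for $(\mcL_p,\mcL_q)$ with $1<p\leq q<\8$, e.g. \cite{PS2011} and \cite[Corollary~3.5]{DDPS1997}; this applies to $\mcL_{\frac{2d}{3},\8}$ since $\frac{2d}{3}\geq\frac43>1$ for $d\geq 2$, and it is the same device the paper uses inside the proof of Lemma \ref{commutator-AJ}.

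The genuine gap is your second stage for odd $d$. You correctly identify the half-power of $J|A|^2J$ (here $J:=(1+\D^2)^{-\frac12}$) as the main obstacle, but you only assert that an integral formula for the square root, or double operator integrals, ``ultimately reduces'' matters to Lemma \ref{commutator-AJ}; no such reduction is carried out, and it is not routine bookkeeping -- it is half of all dimensions, including $d=3$. The argument the paper points to sidesteps the parity issue by symmetrising \emph{before} taking powers: since $B^*B=J|A|^2J=(|A|J)^*(|A|J)$, one has $|B|=\big|\,|A|J\,\big|$, and $|A|J-J^{\frac12}|A|J^{\frac12}=[|A|,J^{\frac12}]J^{\frac12}\in\mcL_{\frac d2,\8}\subset\mcL_{\frac{2d}{3},\8}$ by Lemma \ref{commutator-AJ} with $\al=\bt=\tfrac12$ (this is exactly why that lemma is stated for fractional exponents). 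Your own stage-one lemma, applied to the pair $|A|J$ and the positive operator $J^{\frac12}|A|J^{\frac12}$, then gives $\vf(|B|^d)=\vf\big((J^{\frac12}|A|J^{\frac12})^d\big)$, and the latter is an ordinary operator product for every integer $d$: commuting the $J$-factors to the right via Lemma \ref{commutator-AJ} (splitting $[|A|,J]$ into half-power commutators so as to respect the hypothesis $\al+1<d$ when $d=2$, and either avoiding cyclicity by commuting inside the product or checking that the factor left in trace position is in $\mcL_{1,\8}$) produces only $\mcL_1$ errors and yields $\vf(|A|^dJ^d)$. With this replacement your argument closes for all $d\geq 2$; as written, the odd-dimensional case is not proved.
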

\begin{proof}
By the aid of Lemma \ref{commutator-AJ}, the proof proceeds as in \cite [Lemma~14]{LMSZ2017}, and is therefore omitted.
\end{proof}

\begin{lem}\label{commutator-Dx}
For $x\in \cS(\qt)$ and $A$ defined in \eqref{def-A}, we have
    $$\qd x - A(1+\D^2)^{-\frac 1 2} = [\sgn(\D), 1\otimes M_x]  - A(1+\D^2)^{-\frac 1 2}   \in  \mathcal{L}_{\frac{d}{2},\8}.$$
\end{lem}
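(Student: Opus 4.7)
The plan is to proceed in two stages. In the first stage, I would reduce the operator $\sgn(\D)$ to the smoother $F_0 := \D(1+\D^2)^{-1/2}$. By the computation in the proof of Lemma \ref{Cwikel-xp}, $\sgn(\D) - F_0 = \sum_{j=1}^d \gamma_j \otimes T_{h_j}$, where each symbol $h_j$ decays like $|n|^{-2}$ for large $|n|$. A direct counting argument on $\ent^d$ gives $h_j \in \ell_{d/2,\infty}(\ent^d)$, and by the unitary equivalence between Fourier multipliers on $L_2(\qt)$ and multiplication operators on $\ell_2(\ent^d)$, one has $T_{h_j} \in \mcL_{d/2,\infty}$. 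Since $x \in \cS(\qt)$ is bounded, the commutator $[\sgn(\D) - F_0, 1\otimes M_x]$ lies in $\mcL_{d/2,\infty}$. Thus it suffices to show
\begin{equation*}
    [F_0, 1\otimes M_x] - A(1+\D^2)^{-1/2} \in \mcL_{d/2,\infty}.
\end{equation*}

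In the second stage, I would view both $[F_0, 1\otimes M_x]$ and $A(1+\D^2)^{-1/2}$ as pseudodifferential operators of order $-1$ and compute their principal symbols via Proposition \ref{symbol-composition}. For the $j$-th scalar component $[D_j(1-\Delta)^{-1/2}, M_x]$, only the $|\alpha|_1 \geq 1$ terms of the Leibniz expansion contribute, because the symbol $\sigma_j(\xi) := 2\pi\xi_j \langle 2\pi\xi\rangle^{-1}$ of $D_j(1-\Delta)^{-1/2}$ is scalar in the $\qt$-direction. A direct computation of the $|\alpha|_1 = 1$ term of $\sigma_j \# x - x \# \sigma_j$ yields a principal symbol proportional to
\begin{equation*}
    \frac{\partial_j x}{|2\pi\xi|} - \sum_{k=1}^d \frac{(2\pi\xi_j)(2\pi\xi_k)\,\partial_k x}{|2\pi\xi|^3}.
\end{equation*}
On the other hand, the principal symbol of $A_j$ is $\partial_j x - \sum_k (2\pi\xi_j)(2\pi\xi_k)\partial_k x / \langle 2\pi\xi\rangle^2$, and multiplying by the principal symbol $|2\pi\xi|^{-1}$ of $(1-\Delta)^{-1/2}$ recovers the same expression (up to a unimodular prefactor absorbed in the conventions). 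Summing over $j$ with the $\gamma_j$ matrices, the principal symbols of $[F_0, 1\otimes M_x]$ and $A(1+\D^2)^{-1/2}$ agree, so their difference is a pseudodifferential operator of order at most $-2$. By Proposition \ref{pdo-weak-L}, such an operator lies in $\mcL_{d/2,\infty}$, completing the reduction.

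The main obstacle will be the careful bookkeeping in the symbol computation of the second stage, in particular the correct application of Proposition \ref{symbol-composition} with the conventions for $D_\xi^\alpha$ and the $\qt$-derivatives $D^\alpha$. The symmetrization in the definition of $A_j$ is essential here: both orderings $\frac{D_jD_k}{1-\Delta}M_{\partial_k x}$ and $M_{\partial_k x}\frac{D_jD_k}{1-\Delta}$ yield the same order-$0$ principal symbol, but their symmetric combination makes $A_j$ self-adjoint and ensures that the subleading corrections, which are automatically of order $\leq -2$ and hence in $\mcL_{d/2,\infty}$ by Proposition \ref{pdo-weak-L}, are treated uniformly.
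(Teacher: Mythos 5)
Your proposal is correct and follows essentially the same route as the paper: first replace $\sgn(\D)$ by $\D(1+\D^2)^{-\frac12}$ at the cost of an error in $\mcL_{\frac d2,\infty}$ (your multiplier argument with $h_j=O(|n|^{-2})$ is just a reformulation of the paper's factorisation of $\sgn(\D)-\D(1+\D^2)^{-\frac12}$), and then show via Proposition \ref{symbol-composition} that $[\frac{D_j}{(1-\Delta)^{1/2}},M_x]$ and $A_j(1-\Delta)^{-\frac12}$ have the same order $-1$ principal symbol, so their difference has order $-2$ and Proposition \ref{pdo-weak-L} gives membership in $\mcL_{\frac d2,\infty}$. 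The only point to tighten is your parenthetical ``up to a unimodular prefactor'': the argument needs the two principal symbols to cancel exactly, and once the conventions in Proposition \ref{symbol-composition} are applied consistently they do, which is precisely the bookkeeping the paper carries out to arrive at \eqref{ps-DM}.
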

\begin{proof}
Let $g(\D) = \D (1+\D^2) ^{-\frac 1 2 }$. Then 
\be\begin{split}
\sgn(\D) - g(\D) &= \sgn (\D) \Big(1 -\frac{|\D|}{(1+\D^2)^{\frac 1 2 }}\Big)\\
&= \sgn (\D) \Big(\frac{1}{(1+\D^2)^{\frac 1 2 } \big(   (1+\D^2)^{\frac 1 2 }  +|\D|  \big)     }\Big).
\end{split}\ee
Since $(1+\D^2)^{-\frac 1  2 }  \in  \mathcal{L}_{d,\8}$, it follows that $\sgn(\D) - g(\D) \in \mathcal{L}_{\frac d  2  , \8}$. Therefore,
$$[\sgn(\D), 1\otimes M_x] - [g(\D), 1\otimes M_x] \in \mathcal{L}_{\frac d 2 ,\8} .$$
Thus, it suffices to prove 
\beq\label{commutator-gDx}
[g(\D), 1\otimes M_x]  - A(1+\D^2)^{-\frac 1 2}   \in  \mathcal{L}_{\frac{d}{2},\8}.\eeq

Now let us prove \eqref{commutator-gDx}. {\hl By a short computation using Proposition \ref{symbol-composition}}, we see that the principal symbol of $[\frac{D_j}{(1-\Delta)^{\frac{1}{2}}},   M_x] $ is 
\beq\label{ps-DM}
\frac{1}{|2\pi \xi| } \pd_j x  - \sum_{k=1}^d  \frac{2\pi \xi_k  2\pi \xi_j}{|2\pi\xi|^3}  \pd_k x\,.\eeq
We also need to determine the principal symbol of $A_j  (1-\Delta)^{-\frac 1 2 } $, and to this end we compute the principal symbol of $A_j$.
Recall that 
$$A_j = M_{\pd_j x } -\frac 1 2  \sum_{k=1}^d  \Big( \frac{D_j D_k}{1-\Delta}  M_{\pd_k x }   + M_{\pd_k x }  \frac{D_j D_k}{1-\Delta} \Big).$$
It is evident that the symbol of $M_{\pd_k x }  \frac{D_j D_k}{1-\Delta} $ is $\frac{2\pi\xi_j 2\pi \xi_k }{1+|2\pi\xi|^2 } \pd_k x$, so the principal symbol is $\frac{\xi_j \xi_k }{|\xi| ^2 } \pd_k x$.
By Proposition \ref{symbol-composition}, we know that 
the symbol of $\frac{D_j D_k}{1-\Delta}  M_{\pd_k x } $ has the asymptotic expansion 
$$\sum _{\alpha \in \mathbb{N}_0^d}  \frac{(2\pi {\rm i})^{-|\al |_1}}{\al  !}D_\xi^\al \Big(\frac{2\pi\xi_j 2\pi \xi_k   }{ (1+|2\pi\xi|^2)^{\frac 1 2 } }\Big) D^\al  (\pd_k x) . $$
Thus, the principal symbol of $\frac 1 2  \sum_{k=1}^d  \Big( \frac{D_j D_k}{1-\Delta}  M_{\pd_k x }   + M_{\pd_k x }  \frac{D_j D_k}{1-\Delta} \Big)$ is $  \sum_{k=1}^d  \frac{\xi_k \xi_j}{|\xi|^2}  \pd_k x$, which ensures that the principal symbol of $A_j(1-\Delta) ^{-\frac 1 2 }$ is of order $-1$, given by
$$\frac{1}{2\pi |\xi| } \pd_j x  - \sum_{k=1}^d  \frac{\xi_k \xi_j}{2\pi |\xi|^3}  \pd_k x\,,$$
 the same as that of $[\frac{D_j}{(1-\Delta)^{\frac{1}{2}}},   M_x] $ given in \eqref{ps-DM}. Hence, the order of $[\frac{D_j}{(1-\Delta)^{\frac{1}{2}}},   M_x]  - A_j (1-\Delta)^{-\frac 1 2}  $ is $-2$. By Theorem \ref{pdo-weak-L}, we have 
\be
[\frac{D_j}{(1-\Delta)^{\frac{1}{2}}},   M_x]  - A_j (1-\Delta)^{-\frac 1 2}   \in  \mathcal{L}_{\frac{d}{2},\8}.\ee
Since $[g(\D), 1\otimes M_x]  = \sum_j \gamma_j \otimes [\frac{D_j}{(1-\Delta)^{\frac{1}{2}}},   M_x]   $ and $A(1+\D^2)^{-\frac 1 2} =  \sum_j \gamma_j \otimes  A_j (1-\Delta)^{-\frac 1 2}   $, we obtain \eqref{commutator-gDx}. The lemma is thus proved.
\end{proof}

Based on the above lemmas, we are able to complete the proof of Theorem \ref{trace formula}.

\begin{proof}[Proof of Theorem \ref{trace formula}]
    Assume initially that $x\in \cS(\qt)$. By Lemma \ref{commutator-Dx}, we have that:
    \begin{equation*}
        \qd x\in  A(1+\D^2)^{-\frac{1}{2}} + \mcL_{\frac{d}{2},\8}.
    \end{equation*}
For any continuous normalised trace $\vf$ on $\mcL_{1,\8}$, we invoke Lemma \ref{T-AJ} to obtain that:\
    \begin{equation*}
        \varphi(|\qd x|^d) = \varphi(|A|^d(1+\D^2)^{-\frac{d}{2}}).
    \end{equation*}
 In the proof of Lemma \ref{commutator-Dx}, we have that the principal symbol of $A_j$ is $  \pd_j x  - \sum_{k=1}^d  \frac{\xi_k \xi_j}{|\xi|^2}  \pd_k x$, which restricted to the unit sphere $\mathbb{S}^{d-1}$ is $  \pd_j x  - \sum_{k=1}^d   \xi_k \xi_j   \pd_k x$. Now we appeal to Theorem \ref{Connes-qt} to conclude
        \begin{equation*}
            \varphi(|\qd x|^d) = c_d\int_{\mathbb{S}^{d-1}} \tau\Bigg(\Big(\sum_{j=1}^d |\pd_j x-s_j\sum_{k=1}^d s_k\pd_k x|^2\Big)^{\frac{d}{2}}\Bigg)\,ds.
        \end{equation*}

    However the appeal to Theorem \ref{Connes-qt} relies on the assumption that $x \in \cS(\qt)$, so we remove this assumption by an approximation argument. Indeed, let $x\in \dot{H}_d^1 (\qt) $. By Theorem \ref{sufficiency}, we have $\qd x  \in \mcL_{d,\8}$. By the density of $\cS(\qt) $ in $\dot H_d^1 (\qt)$ (see \cite[Proposition~2.7]{XXY2018}), we can choose a sequence $\{x_n\}_{n=1}^\8 \subset \cS(\qt)$ such that $x_n \ra x$ in $\dot{H}_d^1(\qt)$. We shall show that $\varphi(|\qd x_n|^d) \ra \varphi(|\qd x|^d) $ and
    \beq\label{convergence-Tx}
        \int_{\mathbb{S}^{d-1}} \tau\Bigg(\Big(\sum_{j=1}^d |\pd_j x_n-s_j\sum_{k=1}^d s_k\pd_k x_n|^2\Big)^{\frac d 2}\Bigg)\,ds \ra \int_{\mathbb{S}^{d-1}} \tau\Bigg(\Big(\sum_{j=1}^d |\pd_j x-s_j\sum_{k=1}^d s_k\pd_k x|^2\Big)^{\frac d 2}\Bigg)\,ds.
    \eeq
    {\hl 
        Note that we have a bound:
        \begin{equation*}
             \int_{\mathbb{S}^{d-1}} \tau\Bigg(\Big(\sum_{j=1}^d |\pd_j x_n-s_j\sum_{k=1}^d s_k\pd_k x_n|^2\Big)^{\frac d 2}\Bigg)\,ds \leq C_d\|x_n\|_{\dot{H}^1_d}
        \end{equation*}
        for a certain constant $C_d$,
        and hence \eqref{convergence-Tx} is immediate. On the other hand, using Theorem \ref{sufficiency}, we have:
        \begin{equation*}
            \|\qd x-\qd x_n\|_{\mcL_{d,\8}} \leq C_d\|x-x_n\|_{\dot{H}^{1}_d(\qt)} \ra 0.
        \end{equation*}
        By a verbatim repetition of the argument in the proof of \cite[Theorem 17]{LMSZ2017}, we get 
        $$\||\qd x|^d-|\qd x_n|^d\|_{\mcL_{1,\8}} \ra 0.$$
        Since the trace $\varphi$ is assumed to be continuous in the $\mcL_{1,\infty}$ quasi-norm, it follows that $\varphi(|\qd x_n|^d)\ra \varphi(|\qd x|^d)$.
    }
\end{proof}

    We are now concerned with relating the right hand side of the trace formula in Theorem \ref{trace formula} with the $\dot H^1_d$-norm of $x$.

    \begin{proof}[Proof of Corollary \ref{trace formula-bound}]
   We prove the upper bound first. {\hl Denote
    \begin{equation*}
        T(x) := \Big(\sum_{j=1}^d |\partial_j x-s_j\sum_{k=1}^d s_k\partial_kx|^2\Big)^{\frac{1}{2}},\quad s \in \Sp^{d-1}.
    \end{equation*}
    Then 
    \begin{align*}
           | T(x)|^2 &= \sum_{j=1}^d \big|\partial_j x-s_j\sum_{k=1}^d s_k\partial_k x \big|^2\\
                    &= \sum_{j=1}^d\Big( |\partial_j x|^2 -\sum_{k=1}^d (s_j\partial_j x^* \cdot s_k\partial_k x+s_k\partial_k x^*\cdot s_j\partial_j x)+s_j^2|\sum_{k=1}^d s_k\partial_k x|^2\Big)\\
                    &= \sum_{j=1}^d |\partial_j x|^2 +|\sum_{k=1}^d s_k\partial_kx|^2-\sum_{j,k=1}^d \big(s_j\partial_j x^*\cdot s_k\partial_k x+s_k\partial_k x^*\cdot s_j\partial_j x\big).
    \end{align*}
    However, observing that
    \begin{equation*}
        |\sum_{j=1}^d s_j\partial_j x|^2 = \sum_{j,k=1}^d s_j\partial_j x^*\cdot s_k\partial_k x\,,
    \end{equation*}
    we get
    \beq\label{Tx-expand}
        |T(x)|^2 = \sum_{j=1}^d |\partial_j x|^2-|\sum_{j=1}^d s_j\partial_j x|^2.
    \eeq}
    We then have have easily:
        \begin{equation*}
            |T(x)|^2 \leq \sum_{j=1}^d |\partial_j x|^2.
        \end{equation*} 
        Therefore,
        \begin{equation*}
           \| |T(x)|^2 \|_{\frac d 2} \leq \|\sum_{j=1}^d |\partial_j x|^2 \|_{\frac d 2}.
        \end{equation*}
Hence, by \eqref{Sob-equi-norm}, for every $s\in \mathbb{S}^{d-1}$, we have
        \begin{equation*}
         \tau\big(   |T(x)|^d \big) =  \| |T(x)|^2 \|_{\frac d 2}^{\frac d 2} \leq \|\sum_{j=1}^d |\partial_j x|^2 \|_{\frac d 2} ^{\frac d 2 }\leq C_d\|x\|_{\dot{H}^1_d}^d .
        \end{equation*}
        Thus the upper bound is proved.
        
        Now we prove the lower bound. Since $|T(x)|^2 = \sum_{j=1}^d |\partial_j x-s_j\sum_{k=1}^d s_k\partial_kx|^2$, for each $j$ we have
        \begin{equation*}
            |\partial_j x-s_j\sum_{k=1}^d s_k\partial_k x|^2 \leq |T(x)|^2,
        \end{equation*}
        and therefore,
        \begin{equation}\label{Xj-leq-T}
            \|\partial_j x-s_j\sum_{k=1}^d s_k\partial_k x\|_d \leq \|T(x)\|_d.
        \end{equation}
        For brevity, define
        \begin{equation*}
            X_j = \|\partial_jx-s_j\sum_{k=1}^d s_k\partial_k x\|_d.
        \end{equation*}
Then \eqref{Xj-leq-T} implies that
        \begin{equation}\label{Xj-leq-T}
            \big(\sum_{j=1}^d X_j\big)^d \leq d^d\|T(x)\|_d^d.
        \end{equation}
        
        By the triangle inequality,
        \begin{align*}
            X_j &= \|(1-s_j^2)\partial_j x-\sum_{k\neq j} s_js_k\partial_kx\|_d\\
                &\geq (1-s_j^2)\|\partial_j x\|_d-\sum_{k\neq j} |s_js_k|\|\partial_kx\|_d.
        \end{align*}
        and therefore,
        \begin{equation*}
            \sum_{j=1}^d X_j \geq \sum_{j=1}^d\Big( (1-s_j^2)-\sum_{k\neq j} |s_js_k|\Big)\|\partial_j x\|_d.
        \end{equation*}
        Now, select $1\leq l\leq d$ such that $\|\partial_l x\|_{d}$ is the minimum of $\{\|\partial_1 x\|_d,\|\partial_2 x\|_d,\ldots,\|\partial_d x\|_d\}$. Denote by $e_l$ the $l$-th canonical basic vector of $\real^d$,
        and assume that $s \in B(e_l,\varepsilon)\cap \Sp^{d-1}$.
        We have:
        \begin{equation*}
           \big|(1-s_l^2)-\sum_{k\neq l}|s_ls_k|\,\big| \leq \max\big(1-s_l^2,\sum_{k\neq l}|s_ks_l |\big)
        \end{equation*} 
        Hence,
        $(1-s_l) \leq |s-e_l  | \leq \varepsilon$, so $(1-s_l^2) = (1-s_l)(1+s_l) \leq 2\varepsilon$, and by the Cauchy-Schwarz inequality
        \begin{align*}
            \sum_{k\neq l} |s_k s_l| &\leq (\sum_{k\neq l} |s_k|^2)^{\frac 1 2 }d^{\frac 1 2 }|s_l|\\
                                    &\leq |s-e_l |  d^{\frac 1 2 }  |s_l| \\
                                    &\leq \sqrt{d}\varepsilon.
        \end{align*}
        So,
        \begin{equation}\label{epsilon upper bound}
            \big|(1-s_l^2)-\sum_{k\neq l}|s_ls_k|\,\big| \leq \max\{2,\sqrt{d}\}\varepsilon.
        \end{equation}
        On the other hand, if $j\neq l$, then $|s_j| \leq \varepsilon$ and so:
        \begin{align}
            (1-s_j^2)-\sum_{k\neq j} |s_ks_j| &= 1-|s_j|\sum_{k=1}^d |s_k|\nonumber\\
                                              &\geq 1-\sqrt{d}\varepsilon  \label{epsilon lower bound}.
        \end{align}
        If we select $\varepsilon$ sufficiently small, we have $1-\sqrt{d}\varepsilon\geq 3\max\{2,\sqrt{d}\}\varepsilon$.
        Then combining \eqref{epsilon upper bound} and \eqref{epsilon lower bound}, we have that for all $j\neq l$:
        \begin{equation*}
            3\big|(1-s_l^2)-\sum_{k\neq l}|s_ls_k|\,\big| \leq (1-s_j^2)-\sum_{k\neq j} |s_ks_j|\,,
        \end{equation*}
        and thus,
        \begin{equation*}
           \big|(1-s_l^2)-\sum_{k\neq l} |s_ls_k|\,\big|   \, \|\partial_l x\|_{d}\leq \frac{1}{3}\big((1-s_j^2)-\sum_{k\neq j}|s_ks_j|\big)\|\partial_j x\|_d.
        \end{equation*}
Therefore, using the numerical inequality that if $|z| \leq \frac{1}{3}|w|$ then $|z-w| \geq \frac{2}{3}|w|$, we have
        \begin{align*}
           & \big((1-s_j^2)-\sum_{k\neq j}|s_ks_j|\big)\|\partial_j x\|_{d}+\big((1-s_l^2)-\sum_{k\neq l} |s_ls_k|\big)\|\partial_l x\|_{d}\\ &\geq \frac{2}{3} \big((1-s_j^2)-\sum_{k\neq j}|s_ks_j|\big)\|\partial_j x\|_{d}\\
  &\geq \frac{1}{3}(1-\sqrt{d}\varepsilon)(\|\partial_j x\|_{d}+\|\partial_l x\|_{d}).
        \end{align*}
Consequently, for $s \in B(e_l,\varepsilon)\cap \Sp^{d-1}$, we have,
        \begin{align*}
            \sum_{j=1}^d X_j \geq (1-\sqrt{d}\varepsilon)\sum_{j=1}^d \|\partial_j x\|_d.
        \end{align*}
Now,
        \begin{align*}
            \int_{\Sp^{d-1}} \Big(\sum_{j=1}^d X_j\Big)^d\,ds &\geq \int_{B(e_l,\varepsilon)\cap \Sp^{d-1}} \Big(\sum_{j=1}^d X_j\Big)^d\,ds\\
                                                                 &\geq c_{d,\varepsilon}\|x\|_{\dot{H}^1_d}.
        \end{align*}
By virtue of \eqref{Xj-leq-T}, the desired conclusion is proved.
    \end{proof}

\section{Proof of Theorem \ref{necessity}}\label{sec-nece}

In this section, we are going to give the proof of Theorem \ref{necessity}. We require a lemma on the quantised derivative of $x$ acting by a Fourier multiplier. 

    Recall that for a function $\psi \in L_1(\T^d)$, the convolution with $x \in {\hl L_2(\qt)}$ is defined as:
    \begin{equation*}
        \psi \ast x = \int_{\T^d} \psi(w)\al_{w^{-1}}(x)\,dm(w).
    \end{equation*}


\begin{lem}\label{commutator-convo}
    Let $\psi \in L_1(\T^d)$. If $x\in {\hl L_2(\qt)}$ is such that $\qd x$ {\hl extends to a bounded operator in $\mcL_{d,\8}$, then $\qd (\psi \ast x)$ also extends to a bounded operator in $\mcL_{d,\8}$ and we have:}
    $$\|\qd(\psi\ast x)\|_{\mcL_{d,\8}} \leq {\hl C_d}\|\qd x \|_{\mcL_{d,\8}} \|\psi\|_1 $$
    {\hl for a certain constant $C_d$.}
\end{lem}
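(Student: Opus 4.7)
The plan is to exploit the unitary implementation of the torus action $\alpha$ on $L_2(\qt)$ and the fact that this implementation commutes with $\sgn(D)$, so that $\qd$ is equivariant with respect to $\alpha$. The bound will then follow by writing $\psi \ast x$ as an average of rotates $\alpha_{w^{-1}}(x)$ and invoking the (quasi-)triangle inequality on $\mcL_{d,\infty}$.

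More precisely, for each $w \in \T^d$ let $V_w$ denote the unitary on $L_2(\qt)$ defined by $V_w U^n = w^n U^n$; equivalently $V_w$ sends a vector $y \in L_2(\qt)$ to $\alpha_w(y)$. A direct computation on the $L_2$-basis $\{U^n\}$ gives $V_w M_x V_w^{\ast} = M_{\alpha_w(x)}$, initially for $x \in L_\infty(\qt)$ and extending to $x \in L_2(\qt)$ in the sense of operators densely defined on $\Sc(\qt)$. Since $V_w$ is a Fourier multiplier on $L_2(\qt)$, the operator $1 \otimes V_w$ commutes with each $1 \otimes D_j$ and hence with $\sgn(\D) = \sum_j \gamma_j \otimes D_j/|D|$. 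Putting these two facts together, a short calculation on $\cS(\qt)\otimes \com^N$ yields
\begin{equation*}
    \qd(\alpha_w(x)) = (1\otimes V_w)(\qd x)(1\otimes V_w)^{\ast}.
\end{equation*}
Since $(1\otimes V_w)$ is unitary and $\mcL_{d,\infty}$ is unitarily invariant, $\qd(\alpha_w(x))$ extends to a bounded operator with $\|\qd(\alpha_w(x))\|_{d,\infty} = \|\qd x\|_{d,\infty}$.

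Next, I would use the defining formula $\psi \ast x = \int_{\T^d} \psi(w) \alpha_{w^{-1}}(x)\,dm(w)$ and linearity of the commutator to identify, on smooth test vectors $\eta \in \cS(\qt)\otimes \com^N$,
\begin{equation*}
    \qd(\psi\ast x)\eta = \int_{\T^d} \psi(w)\, \qd(\alpha_{w^{-1}}(x))\eta \, dm(w)
    = \int_{\T^d} \psi(w)\, (1\otimes V_{w^{-1}})(\qd x)(1\otimes V_w)\eta\, dm(w).
\end{equation*}
The interchange of $\qd$ with the integral is justified by unfolding the commutator, moving the integral past $M_{(-)}$ in the $L_2$-pairing against a smooth test vector, and using the strong continuity of $w \mapsto V_w$. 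The right-hand side is an operator-valued integral of a bounded family in $\mcL_{d,\infty}$.

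Finally, since $d \geq 2$, the quasi-norm on $\mcL_{d,\infty}$ is equivalent to a Banach norm, so the above integral converges in $\mcL_{d,\infty}$, and the triangle inequality gives
\begin{equation*}
    \|\qd(\psi\ast x)\|_{d,\infty} \leq C_d \int_{\T^d} |\psi(w)|\, \|(1\otimes V_{w^{-1}})(\qd x)(1\otimes V_w)\|_{d,\infty}\, dm(w) = C_d \|\psi\|_1 \|\qd x\|_{d,\infty},
\end{equation*}
where $C_d$ is the constant relating the quasi-norm to an equivalent norm. I expect the main technical wrinkle to be step (4), namely making rigorous the passage from the \emph{a priori} weak identity $\qd(\psi\ast x)\eta = \int \psi(w)\,\qd(\alpha_{w^{-1}}(x))\eta\,dm(w)$ on smooth vectors to an equality of bounded extensions in $\mcL_{d,\infty}$; this is handled by noting that the right-hand side already defines a bounded operator (by the triangle estimate above) and agrees with the commutator $\qd(\psi\ast x)$ on the dense subspace $\cS(\qt)\otimes\com^N$.
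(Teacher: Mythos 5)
Your proof is correct and follows essentially the same route as the paper: you implement the torus action unitarily, observe that it commutes with $\sgn(\D)$ so that $\qd(\alpha_w(x)) = (1\otimes V_w)(\qd x)(1\otimes V_w)^*$, and then average over $\psi(w)\,dm(w)$. The only cosmetic difference is that the paper outsources the final averaging estimate to a cited lemma of \cite{LMSZ2017}, whereas you justify it directly via the equivalence of the $\mcL_{d,\infty}$ quasi-norm with a norm for $d\geq 2$; both yield the same constant-$C_d$ bound.
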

\begin{proof}
Let $\al$ be the $d$-parameter group of automorphisms given in \eqref{action-T-qt}, i.e. if $u \in \T^d$ then $\al _u( U^n) = u^nU^n$. Then for each $u\in \T^d$, $\al_u$ commutes with Fourier multipliers on $\qt$, and $u\mapsto \al_u$ is a strongly continuous family of unitary operators on $L_2(\qt)$. By the definition of convolution, we have
\beq\label{convo-translation}
\psi\ast x  =\int_{\T^d } \psi(u)\, \al_u^{-1}( x)  \, dm(u). \eeq
Since $\al_u$ and $\frac{D_j}{\sqrt{D_1^2 +D_2^2 +\cdots+ D_d^2}}$ commute, we see that $1\otimes \al_u$ commutes with $\sgn(\D)$. Therefore, by the fact that $\big(\al_u^{-1}(x)\big) y = \al_u^{-1} \Big(  x \big(\al_u( y)\big)   \Big)$, we obtain
$$[\sgn(\D), 1\otimes M_{\psi\ast x} ]     =\int _{\T^d}\psi (u)\, (1\otimes \al_{u^{-1}}) [\sgn(\D), 1\otimes M_{x} ] (1\otimes \al_u)   \, dm(u) .$$
Applying \cite[Lemma~18]{LMSZ2017} to the finite Borel measure $\psi(u)\, dm(u)$ on $\T^d$, we get
$$\|[\sgn(\D), 1\otimes M_{\psi\ast x} ]\|_{\mcL_{d,\8}} \leq {\hl C_d}\|[\sgn(\D), 1\otimes M_{x} ]\|_{\mcL_{d,\8}} \|\psi\|_1 $$
{\hl where the constant comes from the use of the quasi-triangle inequality in the $\mcL_{d,\8}$ quasi-norm.}
This now completes the proof.
\end{proof}

%

\begin{proof}[Proof of Theorem \ref{necessity}]
Firstly, we prove the theorem for self-adjoint $x\in {\hl L_2 (\qt)}$. If we show that $x\in \dot{H}_d^1(\qt)$, then Corollary \ref{trace formula-bound} will ensure that there exists a constant $c_d >0 $ such that for all continuous normalised traces $\vf$ on $\mcL_{1,\8}$,
$$c_d \|x\|_{\dot{H}_d^1} \leq  \vf(|\qd x|^d )  ^{\frac 1 d }\leq \|\qd x\|_{d,\8}.$$ 
Thus, we are reduced to proving $x\in \dot{H}_d^1(\qt)$.

Consider the square Fej\'er mean
$$F_N(x)  = \sum_{m\in \ent^d, \max_j |m_j| \leq N}   \Big(1-\frac{ |m_1|}{N+1}\Big) \cdots  \Big(1-\frac{ |m_d|}{N+1}\Big)\widehat{x}(m) U^m .$$
For every $N\in \nat$, it is the convolution of $x$ with the periodic function
$$F_N(u) = \frac{1}{( N+1 )^d } \Big(   \frac{\sin \big(\pi (N+1) u_1\big)}{\sin \big(\pi   u_1\big)}  \Big)^2 \cdots \Big(   \frac{\sin \big(\pi (N+1) u_d\big)}{\sin \big(\pi   u_d\big)}  \Big)^2.$$
The family $\{F_N\}_{N\in \nat}$ is an approximate identity of $L_1(\T^d)$ (see \cite{Graf2008}), so we have uniform bound of $\| F_N\|_1$ in $N\in \nat$. Thus we can apply Lemma \ref{commutator-convo} to $F_N$. The result is
$$\|\qd\big( F_N(x)\big)\|_{\mcL_{d,\8}} \leq \|\qd x\|_{\mcL_{d,\8}} \| F_N\|_1 \leq C \|\qd x \|_{\mcL_{d,\8}}  .$$
Since each $F_N(x)$ is a polynomial in $\qt$, Corollary \ref{trace formula-bound} yields
\be
c_d \|F_N(x)\|_{\dot{H}_d^1}   \leq \vf(|\qd\big(F_N(x)\big)|^d ) ^{\frac 1 d } \leq C\|\qd x\|_{\mcL_{d,\8}} . \ee
Hence, for each $j $, we obtain a bounded sequence $\{ \pd_j F_N(x) \}_{N\in \nat}$ in $L_d(\qt)$. Moreover, since $L_d(\qt)$ is reflexive, we may assume that $\pd_j F_N(x) $ converges to some $y_j \in L_d(\qt)$. On the other hand, by \cite[Proposition~3.1]{CXY2013}, we have $\lim_N F_N(x) = x$ in {\hl$L_2(\qt)$}. Hence, $\pd_j F_N(x)  $ converges to $\pd_j x$ in $\Dist'(\qt)$. Therefore, we have $y_j = \pd_j x  \in L_d(\qt)$. Consequently, we conclude that $x\in \dot{H}_d^1(\qt)$.

It remains to consider $x\in  {\hl L_2 (\qt)}$ which are not self-adjoint. Write $x= x_1 + \ri x _2$ with 
$$x_1 = \frac{1}{2}    (x +x^*),\quad x_2 = \frac{1}{2\ri }  (x-x^*).$$
If $[\sgn(\D), 1\otimes M_{x} ] \in \mcL_{d, \8}$, then $[\sgn(\D), 1\otimes M_{x^*} ]=- [\sgn(\D), 1\otimes M_{x} ]^* \in \mcL_{d,\8}$. Then we have $[\sgn(\D), 1\otimes M_{x_1} ]\in \mcL_{d,\8}$ and $[\sgn(\D), 1\otimes M_{x_2} ]\in \mcL_{d,\8}$. By the above conclusion for self-adjoint elements, we know that $x_1 , x_2\in \dot{H}^1_d(\qt)$, which implies $x= x_1+  \ri x_2 \in\dot{H}^1_d(\qt)$. More precisely,
\be\begin{split}
\|x\|_{\dot{H}^1_d} &\leq \|x_1\|_{\dot{H}^1_d} +\|x_2\|_{\dot{H}^1_d} \\
&\leq C_1(\|[\sgn(\D), 1\otimes M_{x_1} ]\|_{ \mcL_{d,\8}} +\|[\sgn(\D), 1\otimes M_{x_2} ]\|_{ \mcL_{d,\8}} ) \\
& \leq C_2(\|[\sgn(\D), 1\otimes M_{x} ]\|_{ \mcL_{d,\8}} +\|[\sgn(\D), 1\otimes M_{x} ]^*\|_{ \mcL_{d,\8}} )  \\
& = 2C_2 \|[\sgn(\D), 1\otimes M_{x} ]\|_{ \mcL_{d,\8}} .
\end{split}\ee
The theorem is proved.
\end{proof}

{\hl
\noindent{\bf Acknowledgements.} The authors wish to thank the anonymous referees for careful reading and useful suggestions; in particular one referee pointed out how our main results could be proved without an $L_\infty$ condition. We are also greatly indebted to Professor Rapha\"el Ponge for many helpful comments on the section of Pseudodifferential Operators. The authors are supported by Australian Research Council (grant no. FL170100052); X. Xiong is also partially supported by the National Natural Science Foundation of China (grant no. 11301401).
}


\begin{thebibliography}{99}

\bibitem{Baaj1988} S. Baaj.
\newblock {Calcul pseudodiff\'erentiel et produits crois\'es de $C^*$-alg\`ebres, I and II}.
\newblock {\it C. R. Acad. Sc. Paris, s\'er. I,} 307 (1988), 581-586 and 663-666.

\bibitem{Bellissard-original} J.~Bellissard.
\newblock {$K$}-theory of {$C^\ast$}-algebras in solid state physics.
\newblock In {\em Statistical mechanics and field theory: mathematical aspects
  ({G}roningen, 1985)}, volume 257 of {\em Lecture Notes in Phys.}, pages
  99--156. Springer, Berlin, 1986.

\bibitem{Bellissard-van-Elst-Schulz-Baldes} J.~Bellissard, A.~van Elst, and H.~Schulz-Baldes.
\newblock The noncommutative geometry of the quantum {H}all effect.
\newblock {\em J. Math. Phys.}, 35(10):5373--5451, 1994.
\newblock Topology and physics.

\bibitem{BS1989} M. S. Birman, M. Z. Solomyak.
\newblock Operator integration, perturbations and commutators.
\newblock {\it Zap. Nauchn. Sem. Leningrad. Otdel. Mat. Inst. Steklov. (LOMI) 170} (1989) 34-66, 321. 





\bibitem{CXY2013} Z. Chen, Q. Xu, and Z. Yin.
\newblock Harmonic analysis on quantum tori.
\newblock {\it Comm. Math. Phys.}, 322 (2013), 755–805.


\bibitem{Connes1980} A. Connes.
\newblock {\it $C^*$-alg\`ebres et g\'eom\'etrie diff\'erentielle}.
\newblock {\it C. R. Acad. Sc. Paris, s\'er. A,} 290 (1980), 599-604.

\bibitem{Connes-ncdg-1985} A.~Connes.
\newblock Noncommutative differential geometry.
\newblock {\em Inst. Hautes \'Etudes Sci. Publ. Math.}, (62):257--360, 1985.


\bibitem{Connes1988}A. Connes.
\newblock The action functional in noncommutative geometry.
\newblock {\it Comm. Math. Phys.}, 117 (1988), 673-683.

\bibitem{Connes1994} A. Connes.
\newblock {\it Noncommutative Geometry}.
\newblock Academic Press, San Diego, CA, 1994.

\bibitem{Connes1995} A. Connes.
\newblock Noncommutative geometry and reality.
\newblock {\it J. Math. Phys.}, 36 (1995), no. 11 6194--6231.


\bibitem{Connes-Douglas-Schwarz-matrix-theory-1998} A.~Connes, M.~R. Douglas, and A.~Schwarz.
\newblock Noncommutative geometry and matrix theory: compactification on tori.
\newblock {\em J. High Energy Phys.}, (2):Paper 3, 35, 1998.


\bibitem{CMSZ} A. Connes, E. McDonald, F. Sukochev and D. Zanin
\newblock Conformal trace theorem for Julia sets of quadratic polynomials. 
\newblock {\it Ergodic Theory and Dynamical Systems}, 1--26. (2017)

\bibitem{CM2014} A. Connes and H. Moscovici.
\newblock Modular curvature for noncommutative two-tori.
\newblock {\it J. Amer. Math. Soc.}, 27 (2014), 639-684.

\bibitem{CSZ} A. Connes, F. Sukochev and D. Zanin.
\newblock Trace theorem for quasi-Fuchsian groups.
\newblock {\it Mat. Sb.}, 208 (2017), 59--90.

\bibitem{CST1994} A. Connes, D. Sullivan and N. Teleman. \newblock Quasiconformal mappings, operators on Hilbert space, and local formulae for characteristic classes.
\newblock {\it Topology}, 33 (1994), 663-681.

{\hl
\bibitem{CT2011} A. Connes and P. Tretkoff.
\newblock The Gauss-Bonnet theorem for the noncommutative two torus.
\newblock{\it  Noncommutative geometry, arithmetic, and related topics}, Johns Hopkins Univ. Press, Baltimore, MD, 2011, 141–158.
}

\bibitem{Cwikel1977} M. Cwikel.
\newblock Weak type estimates for singular values and the number of bound states of Schr\"odinger operators.
\newblock {\it Ann. of Math.}, 106 (1977), 93-100


\bibitem{DDP1992} P.G. Dodds, T.K. Dodds and B. de Pagter.
\newblock { Fully symmetric operator spaces}.
\newblock  {\it Integral Equations Operator Theory}, 15 (1992), 942-972.



\bibitem{DDPS1997} P.G. Dodds, T.K. Dodds, B. de Pagter and F. Sukochev.
\newblock {Lipschitz continuity of the absolute value and Riesz projections in symmetric operator spaces}.
\newblock  {\it J. Funct. Anal.}, 148 (1997), 28-69.

\bibitem{DFWW2004} K. Dykema, T. Figiel, G. Weiss and M. Wodzicki.
\newblock { Commutator structure of operator ideals.} 
\newblock {\em Adv. Math.} 185 (2004), 1-79.

{\hl
\bibitem{Effros-Hahn-memoirs-1967} E.~G.~Effros and F.~Hahn.
\newblock {Locally compact transformation groups and {$C^{\ast} $}-
  algebras}.
\newblock {\em Bull. Amer. Math. Soc.} 73 (1967), 222-226.
}
  



\bibitem{Gheorghe2001} L. Gheorghe.
\newblock Hankel operators in Schatten ideals.
\newblock {\it Ann. Mat. Pura Appl.} (4) 180 (2001), no. 2, 203--210. 

\bibitem{GJP2017} A. M. Gonz\'alez-P\'erez, M. Junge and J. Parcet.
\newblock Singular integral in quantum Euclidean spaces.
\newblock arXiv:1705.01081.

\bibitem{green-book} J. M. Gracia-Bondía, J. Várilly and H. Figueroa
\newblock Elements of noncommutative geometry.
\newblock Birkhäuser Advanced Texts: Basler Lehrbücher. [Birkhäuser Advanced Texts: Basel Textbooks] 
\newblock Birkhäuser Boston, Inc., Boston, MA, 2001. xviii+685 pp.
 
 
\bibitem{Graf2008} L. Grafakos. 
\newblock {\it Classical Fourier analysis, Second Edition}. \newblock Springer-Verlag, New York, 2008.

\bibitem{HLP2018} H. Ha, G. Lee and R. Ponge.
 \newblock Pseudodifferential calculus on noncommutative tori, I.
 \newblock {\it Internat. J. Math.} 30 (2019), no. 8, 1950033, 74 pp.

\bibitem{HLP2018a} H. Ha, G. Lee and R. Ponge.
 \newblock Pseudodifferential calculus on noncommutative tori, II.
 \newblock {\it Internat. J. Math.} 30 (2019), no. 8, 1950034, 73 pp. 



\bibitem{Hor1965} L. H\"{o}rmander.
\newblock Pseudo-differential operators.
\newblock Vol. 18 (1965), Issue 3,  501-517.


\bibitem{JW1982} S. Janson and T.H. Wolff.
\newblock Schatten classes and commutators of singular integral operators.
\newblock {\it Ark. Mat.}, 20 (1982), 301--310.

\bibitem{KLPS} N. Kalton, Nigel, S. Lord, D. Potapov and F. Sukochev.
\newblock Traces of compact operators and the noncommutative residue.
\newblock {\it Adv. Math.} 235 (2013), 1--55. 

\bibitem{KN1965}J. J. Kohn and L. Nirenberg.
\newblock An algebra of pseudo-differential operators.
\newblock {\it Comm. Pure Appl. Math.} 18 (1965), 269-305.



\bibitem{LMR2010} M. Lein, M. M\u{a}ntoiu and S. Richard.
\newblock { Magnetic pseudodifferential operators with coefficients in $C^*$-algebras}.
\newblock {\em Publ. Res. Inst. Math. Sci.}, 46 (2010), 755-788.


\bibitem{LeSZ2017} G. Levitina, F. Sukochev and D. Zanin.
\newblock Cwikel estimates revisited.
\newblock arXiv:1703.04254.


\bibitem{LJP2016} C. L\'evy, C.N. Jim\'enez and S. Paycha.
\newblock The canonical trace and the noncommutative residue on the noncommutative torus.
\newblock {\it Trans. Amer. Math. Soc.} 368 (2016), no. 2, 1051-1095.


\bibitem{LMSZ2017} S. Lord, E. McDonald, F. Sukochev and D. Zanin.
\newblock {Quantum differentiability of essentially bounded functions on Euclidean space}.
\newblock {\it J. Funct. Anal.}, 273 (2017), 2353-2387.
{\hl
\bibitem{LPS2010} S. Lord, D. Potapov and F. Sukochev.
\newblock {Measures from Dixmier traces and zeta functions}.
\newblock {\it J. Funct. Anal.}, 259 (2010), 1915--1949.
}
\bibitem{LSZ2012} S. Lord, F. Sukochev and D. Zanin.
\newblock {\it Singular traces: theory and applications}.
\newblock Walter de Gruyter, Vol. 46, 2012.


\bibitem{LSZ2018} S. Lord, F. Sukochev and D. Zanin.
\newblock {Advances in Dixmier traces and applications}.
\newblock (submitted manuscript).


\bibitem{MPR2005} M. M\u{a}ntoiu, R. Purice and S. Richard.
\newblock {Twisted crossed products and magnetic pseudodifferential operators, in Advances in operator algebras and mathematical physics}.
\newblock {\em Theta Ser. Adv. Math.} 5 (2005), 137-172.




\bibitem{MSZ2018} E. McDonald, F. Sukochev and D. Zanin.
\newblock A $C^*$-algebraic approach to the principal symbol II.
\newblock  Math. Ann. 374 (2019), no. 1-2, 273-322. 


\bibitem{PSW2002}B.~de Pagter, F.~Sukochev and H.~Witvliet.
\newblock Double operator integrals.
\newblock {\it J. Funct. Anal.}, 192 (2002), 52–111.




\bibitem{Peller2003} V.~V.~Peller.
\newblock {\it Hankel Operators and Their Applications}.
\newblock Springer Monographs in Mathematics, 2003.




\bibitem{Pimsner-Voiculescu-crossed-products-1980} M.~Pimsner and D.~Voiculescu.
\newblock Exact sequences for {$K$}-groups and {E}xt-groups of certain
  cross-product {$C^{\ast} $}-algebras.
\newblock {\em J. Operator Theory}, 4(1):93--118, 1980.

\bibitem{PX2003} G.~Pisier and Q.~Xu.
\newblock Noncommutative $L^p$-spaces.
\newblock {\it Handbook of the geometry of Banach spaces Vol. 2},
ed. W. B. Johnson and J. Lindenstrauss, 2003, 1459-1517, North-Holland, Amsterdam.



\bibitem{PS2008} D.~Potapov and F.~Sukochev.
\newblock Lipschitz and commutator estimates in symmetric operator spaces.
\newblock {\em J. Operator Theory}, 59:1(2008), 211–234.





\bibitem{PS2009} D.~Potapov and F.~Sukochev.
\newblock Unbounded Fredholm modules and double operator integrals.  
\newblock {\it J. Reine Angew. Math.}, 626 (2009) 159–185.




\bibitem{PS2011} D.~Potapov and F.~Sukochev.
\newblock Operator-Lipschitz functions in Schatten-von Neumann classes.
\newblock {\it Acta Math.}, 207 (2011), 375-389.



\bibitem{Rieffel-1981} M.~A.~Rieffel.
\newblock {$C^{\ast} $}-algebras associated with irrational rotations.
\newblock {\em Pacific J. Math.}, 93(2):415--429, 1981.

\bibitem{Rieffel-deformation-quantization} M~A.~Rieffel.
\newblock Deformation quantization for actions of {${\bf R}^d$}.
\newblock {\em Mem. Amer. Math. Soc.}, 106(506):x+93, 1993.
{\hl
\bibitem{RX2011} \'E.~Ricard and Q.~Xu.
\newblock Complex interpolation of weighted noncommutative {$L_p$}-spaces.
\newblock {\em  Houston J. Math. }, 37 (2011),1165--1179. 
}
\bibitem{RS1989} R.~Rochberg and S.~Semmes.
\newblock Nearly weakly orthonormal sequences, singular value estimates, and Calder\'on--Zygmund operators.
\newblock {\it J. Funct. Anal.}, 86 (1989), 237-306.


\bibitem{Seiberg-Witten} N.~Seiberg and E.~Witten.
\newblock String theory and noncommutative geometry.
\newblock {\em J. High Energy Phys.}, (9):Paper 32, 93, 1999.

\bibitem{SSUZ2015} E.~Semenov, F.~ Sukochev, A.~Usachev and D.~Zanin
\newblock Banach limits and traces on $\mathcal{L}_{1,\infty}$.
\newblock {\em Adv. Math.} 285 (2015), 568--628. 

\bibitem{Simon1979} B.~Simon.
\newblock {\it Trace Ideals and Their Applications}.
\newblock London Mathematical Society Lecture Note Series, vol.35, Cambridge University Press, Cambridge–New York, 1979.



 
\bibitem{Spera1992} M. Spera.
\newblock  Sobolev theory for noncommutative tori.
\newblock {\em Rend. Sem. Mat. Univ. Padova}.  86 (1992), 143-156.
 
 
 
%
 

\bibitem{Stein1993}E.~M.~Stein.
 \newblock Harmonic analysis: real-variable methods, orthogonality, and oscillatory integrals.
 \newblock Princeton Mathematical Series, 43. Monographs in Harmonic Analysis, III. Princeton University Press, Princeton, NJ, 1993.
 
%
%

\bibitem{Tao2018} J.~Tao.
\newblock The theory of pseudo-differential operators on the noncommutative $n$-torus.
\newblock {\it 2018 J. Phys.: Conf. Ser.} 965(2018) 1-12.

\bibitem{Xia-qhe} J.~Xia.
\newblock Geometric invariants of the quantum {H}all effect.
\newblock {\em Comm. Math. Phys.}, 119(1):29--50, 1988.

\bibitem{XX2018} R. Xia and X. Xiong.
\newblock Mapping properties of operator-valued pseudo-differential operators.
\newblock arXiv:1804.03435.

\bibitem{XXY2018} X. Xiong, Q. Xu and Z. Yin.
\newblock Sobolev, Besov and Triebel-Lizorkin spaces on quantum tori.
\newblock {\it Mem. Amer. Math. Soc.} 252 (2018), no. 1203, vi+118 pp..


\bibitem{Xu2007} Q. Xu.
\newblock Noncommutative $L_p$-spaces and martingale inequalities.
\newblock (unpublished manuscript), 2007.



\end{thebibliography}
\end{document}